\documentclass[a4paper,11pt]{article}
\usepackage{amsmath,amsthm,amsfonts,amssymb,bbm,stmaryrd,scalerel}
\usepackage{graphicx,psfrag,tikz}
\usepackage{caption}
\usepackage{subcaption}
\usepackage{cite}
\usepackage{hyperref}
\usepackage{bm}
\usepackage{enumitem}
\usepackage[hmargin=1in,vmargin=1in]{geometry}
\usepackage{stmaryrd}
\usepackage{dsfont} 
\usepackage{xcolor}
\hypersetup{
	colorlinks,
	linkcolor={red!50!black},
	citecolor={blue!50!black},
	urlcolor={blue!80!black}
}
\renewcommand{\P}{\mathbb{P}}

\newcommand{\N}{\mathbb N}
\newcommand{\R}{\mathbb R}
\newcommand{\Z}{\mathbb Z}
\newcommand{\Q}{\mathbb Q}

\newcommand{\inti}[1]{{\llbracket #1 \rrbracket}}
\newcommand{\Omq}{\Omega_{\text{\tiny $\mathbb{Q}$}}}

\newcommand{\cB}{\mathcal{B}}

\newcommand{\cG}{\mathcal{G}}

\newcommand{\cL}{\mathcal{L}}

\newcommand{\cU}{\mathcal{U}}

\newcommand{\cdir}{c_d}
\newcommand{\disjc}{\mathrm{Disj}}
\newcommand{\disjcg}{\overline{\mathrm{Disj}}}
\newcommand{\tdis}{\mathrm{3Close}}
\newcommand{\Mdisjc}{\mathrm{MaxDisjCls}}
\newcommand{\Mbox}{M}
\newcommand{\smbox}{\mathcal{R}}
\newcommand{\cone}{\mathcal{C}}
\newcommand{\coneu}{\mathcal{C}^{up}}
\newcommand{\coned}{\mathcal{C}^{dn}}

\newcommand{\Ind}{\mathcal{E}}
\newcommand{\brnc}[1]{\mathrm{Brnch}(#1)}
\newcommand{\trpz}{\mathcal{T}}
\newcommand{\bqtu}{\mathrm{Bqt}^{\mathrm{up}}}
\newcommand{\bqtd}{\mathrm{Bqt}^{\mathrm{dn}}}

\newcommand{\Rufk}{\mathfrak{X}_{\uparrow}}

\newcommand{\el}{\mathcal{B}}
\newcommand{\ele}{\mathfrak{A}} 
\newcommand{\gle}{\mathfrak{L}} 
\newcommand{\els}{\mathfrak{C}} 
\newcommand{\NT}{\mathrm{NoTouch}}
\newcommand{\Cl}{\mathrm{Close}}

\newcommand{\uni}{\cU}

\newcommand{\Mi}{U}

\newcommand{\sig}{{\scaleobj{0.8}{\boxempty}}} 
\newcommand{\sigg}{{\scaleobj{0.9}{\boxempty}}} 

\newcommand{\Thg}{\mathrm{3G}}
\newcommand{\nThg}{\mathrm{NO3G}}

\newcommand{\dl}{\cL}

\newcommand{\paths}[2]{\Pi_{[{#1},{#2}]}}
\newcommand{\ash}{\mathcal{K}}
\newcommand{\argmax}[1]{\underset{#1}{\mathrm{argmax}}}

\newtheorem{theorem}{Theorem}[section]
\newtheorem{lemma}[theorem]{Lemma}
\newtheorem{proposition}[theorem]{Proposition}
\newtheorem{corollary}[theorem]{Corollary}

\theoremstyle{definition}
\newtheorem{definition}[theorem]{Definition}
\newtheorem{remark}[theorem]{Remark}

\numberwithin{equation}{section}
\numberwithin{figure}{section}

\author{Ofer Busani\thanks{School of Mathematics, University of Edinburgh, James Clerk Maxwell Building, Peter Guthrie Tait Road,
		Edinburgh, EH9 3FD, UK. E-mail: {\tt obusani@ed.ac.uk}} 
}
\title{Non-existence of three non-coalescing infinite geodesics with the same direction in the directed landscape}

\date{\today}

\begin{document}

\sloppy

\maketitle
\begin{abstract}
	It is believed that for metric-like models in the KPZ class the following property holds: with probability one, starting from any point, there  are at most two semi-infinite geodesics  with the same direction that do not coalesce. Until now, such a result was only proved for one model - exponential LPP \cite{C11} using its inherent connection to the totally asymmetric exclusion process. We prove that the above property holds for the directed landscape, the universal scaling limit of models in the KPZ class. Our proof reduces the problem to one on line ensembles and therefore paves the way to show similar results for other metric-like models in the KPZ class. Finally, combining our result with the ones in \cite{BSS22, bhatia2023duality} we obtain the full qualitative geometric description of infinite geodesics in the directed landscape. 
\end{abstract}
\tableofcontents

\section{Introduction and main results}
\subsection{Random growth interfaces in the KPZ class}
In their seminal paper \cite{KPZ86} Kardar, Parisi and Zhang put forward a singular stochastic partial differential equation as a model for the random growth of many natural phenomena such as accumulation of snow, fire fronts, bacterial colonies etc.\ 
Based on this, it was conjectured in \cite{KPZ86} that universal scaling exponents of $1/3$ and $2/3$ should govern the fluctuation of a large class of random growth interface models satisfying some natural properties such as smoothness, lateral growth and short range correlation of the dynamics. 
Their work was ensued with intense investigation in the physics and mathematics literature that resulted in a large set of models that are believed to exhibit universal long time fluctuations. This class of models is referred to as the KPZ class. 

Our understanding of the exact nature of this universality of fluctuations has evolved over the past quarter of a century (see surveys  \cite{Corwin-survey,Zygouras22}).  A major milestone came in \cite{KPZfixed}, where the authors showed that under proper scaling, the dynamics of the fluctuations of models in the KPZ class converge to the \textit{KPZ fixed-point} - a Markov process living in the space of functions. 


Starting from two initial conditions, one may couple the dynamics of the two  interfaces by applying the same driving noise to both initial datum. One may then ask, is there a scaling limit for the joint distribution of two interfaces? The answer to that was obtained  in \cite{DOV22}, where the authors constructed the directed landscape (DL) - a continuous function $\dl$ on $\R^4_{\uparrow}$ that allows one to apply the KPZ fixed-point dynamics via a variational formula involving the initial datum and the DL. In that sense, the DL is the most general scaling limit in the KPZ class. 
\subsection{Geodesics in the KPZ class}
Variational formulas are not unique to the DL, in fact many prelimiting models in the KPZ class satisfy some form of such formulas, which in turn endow the model (and its associated interface) with a dynamics. Just as in the theory of parabolic PDEs, maximizers of variational formulas give rise to geodesics - continuous paths that evolve in time and essentially trace the dynamics of the initial condition in the space-time plane. Geodesics are instrumental in the study of many aspects of KPZ models  such as the regularity of  interfaces \cite{BBS20,busa-ferr-20},  space time correlation \cite{FO19,BG21,FO22,BGZ21},  fractal dimensions of the difference profile \cite{BGH21,BGH22}, a solution to the slow bond problem \cite{BSS16} and mixing times \cite{S23,SS22}.

Being such an important tool, the geodesics themselves have been the focus of a vast study including questions about  their fluctuations \cite{Joansson-03,BSS16,BSS19}, modulus of continuity \cite{H19,HS20}, coalescence \cite{P16,Z20, BSS19, SS20,DEP23} and recently their occupation time \cite{Ganguly-Zhang-2022,GZ22}. 

In models such as first and last passage percolation geodesics appear naturally as the minimizers/maximizers of the passage time between any two points. Thus, first and last passage percolation models can  (and should) be viewed as random-metric models. As these models are not necessarily bona fide metrics, we shall refer to them colloquially as metric-like models. One such model is the DL itself, what makes it unique, however, is  that it is conjectured to be the scaling limit of metric-like models in the KPZ class in a strong sense i.e.\ geodesics of metric-like models should converge to the geodesics of the DL under the KPZ scaling. This was proved for a handful of integrable models in \cite{DOV22,Dauvergne-Virag-21,Wu23}. 

One direction in the study of geodesics in the KPZ class is that of infinite geodesics. These are infinite continuous (with respect to the natural topology on the graph $\mathcal{G}$ of the model at hand) paths such that the restriction  between any two points on the path is again a geodesic. The study of such objects can be traced back to the 90's \cite{HN97,HN01,LN96,N95} in the context of first passage percolation as well as \cite{AH19}, and continued to their counterparts in Last passage percolation \cite{FP05,FMP09,CP11,GRAS17a,SS23a}.  One of the main motivation behind studying such objects is that infinite geodesics allow one to construct  a family of random processes $\{\cB_\theta\}_{\theta\in\Theta}$ that are stationary (up to a constant) with respect to the dynamics coming from the variational formula \cite{H08,CP12,DH14,GRS17b}. The set $\Theta$ is that of all possible directions and depends on the model. One can then show that, with probability one, for any $\theta\in\Theta$ the \textit{Busemann function} $\cB_\theta$ lends itself to a construction of infinite geodesics starting from any point on the graph and have a direction  $\theta$ \cite{JRAS19,SS23b,BSS22}. Such geodesics are called \textit{Busemann gedoesics}.

\subsubsection{The behaviour of infinite geodesics at directions of non-uniqueness and the N3G problem}

Let us, in broad strokes, paint the picture of our knowledge of infinite geodesics in the KPZ class. For the sake of brevity and clarity, we restrict the discussion here  to (mostly) exponential LPP, arguably the prelimiting model for which most is known about infinite geodesics. In \cite{FP05} it was shown  that  with probability one, all geodesics are directed, every direction has a geodesic directed at it,  and there exists a set of full Lebesgue measure for which uniqueness holds. Using the connection between second class particle and the competition interface, it was shown in  \cite{FMP09} that there are random directions at which at least two geodesics  emanating from a shared point are directed.  In \cite{C11}  the author utilized a deep connection between exponential LPP and the TASEP speed process to show that from any point on the lattice emanate \textbf{at most} two infinite geodesics.  Non-existence of bi-infinite geodesics was resolved in \cite{BHS22} (see also \cite{BBS20b} for a different proof).  In \cite{JRAS19}, the authors identified the set of random directions $\Xi\subset \Theta$ of non-uniqueness of geodesics with the jump points of the Busemann process and obtained the full geometric description of infinite geodesics in Exponential LPP. In particular, the results in \cite{JRAS19} strengthen the results in  \cite{C11} by showing that there exist no three non-coalescing infinite geodesics (not necessarily emanating from the same point) in the same direction. 

The picture described above is believed to hold in general for metric-like models in the KPZ class. In particular, the results in \cite{C11} suggest that  for any metric-like model in the KPZ class, with probability one, for any $x\in\cG$, there exists no three geodesics starting from $x$, sharing the same direction and do not coalesce at infinity - a property we shall refer to as the \textit{ N3G property}. 

 In  \cite{GRAS17a} infinite geodesics for lattice  LPP with quite general weights were considered. The work therein  implies that at a direction of non-uniqueness $\xi\in\Xi$,  Busemann geodesics (mentioned above) make up two of the geodesics directed at $\xi$. Thus, the N3G property is equivalent to showing that all semi-infinite geodesics  are Busemann geodesics. The latter, are far better understood as they were originally constructed via the Busemann process which is, as least for some models, fairly understood \cite{Fan-Seppalainen-20}. Thus, the N3G problem is not merely a quantitative question about geodesic but in fact lies at the heart of our understanding of infinite geodesics in the KPZ class.

 In  \cite{JRAS19} the authors obtained the  full global behaviour of infinite geodesics of the exponential LPP by resolving the coalescence  of Busemann geodesics in that model. Until now this  has been  the only instance of a complete geometric description of infinite geodesics for a model in the KPZ class. Although much of the machinery and ideas in  \cite{JRAS19} can be generalized for other models, a key input in proving  their results is the N3G property proved in \cite{C11}.  Consequently, the N3G problem  has been a major hurdle in obtaining a complete description of the global behaviour of geodesics in these models \cite{SS23b,BSS22}.

Although being such a central question in the study of infinite geodesics, the N3G property has not been proved for any other model but the exponential LPP. This is partly due to that the  proof  in \cite{C11} relies on the inherent connection between the  exponential LPP and the exclusion process and therefore does not generalize to other metric-like models. 
 
\subsection{Solving the N3G for the directed landscape via line ensembles} 
 The main result in this paper is Theorem \ref{thm:4}, where we resolve the N3G problem in the DL. As mentioned earlier, a similar result has only been proven for one model - the Exponential LPP in \cite{C11}. Perhaps as important, we believe our proof can be generalized to  both zero and (possibly) positive temperature integrable models in the KPZ class. To illustrate this, let us outline the main ideas behind the proof.  
 
 {\it{Sketch of the proof of Theorem \ref{thm:4}.}} Let us suppose that there were three non-coalescing infinite geodesics with some random direction $\xi\in\R$, emanating from the point $(0,0)$. 
 \begin{enumerate}[label = Step \arabic*]
 	\item \label{st1} Figure \ref{fig:5}. Bounds on the fluctuations of infinite geodesics suggest that at time $T$ these geodesics should not be too far from one another; these geodesics should be found within a spatial interval of length  $O(T^{2/3})$. For a large time $T$,  the three geodesics must have branched off which implies that one can find three disjoint geodesics in a (random) trapezoid with lower base of order $O(1)$ and upper base of order $O(T^{2/3})$. Since we would like to cover all possible directions, we must cover a spatial interval of order $T$, which translates into  $O(T^{1/3})=O(T/T^{2/3})$ possible such trapezoids. If we would like to use a union bound on all possible trapezoids, we must show that the probability of three disjoint geodesics in a trapezoid is $o(T^{-1/3})$.
 	\item \label{st2} Figure \ref{fig:6}. Scaling properties of the DL and some observations regarding disjoint geodesics show that the previous step is equivalent to showing that the probability of two pairs of disjoint geodesics with $\epsilon$-close endpoints in a trapezoid of lower and upper bases of length $\epsilon$ and (lets say) $\log(\epsilon^{-1})$ respectively,  is $o(\epsilon^{1/2})$. 
 	\item \label{st3} Figure \ref{fig:7}. By appending disjoint paths to their endpoints, one can turn each of the pairs of geodesics in the previous step into  two disjoint paths with shared endpoints. 
 	\item \label{st4} Figure \ref{fig:8}. Using a version of the Greene's Theorem for the DL obtained in \cite{Dauvergne-Zhang-2021}, one can show that the probability of the event in the previous step can be bounded by the probability that the first two lines of the Airy lines ensemble are $\epsilon^{1/2}$ close to each other at any two points in an interval of order $\log(\epsilon^{-1})$, and that the latter probability  is $o(\epsilon^{1/2})$. This bound in turn, can be obtained using various techniques such as the ones  in  \cite{Hammond1,dauvergne2023wiener}, which rely  on the Brownian Gibbs property of the Airy line ensemble. 
 \end{enumerate}
  Both positive and zero temperature integrable metric-like models give rise to Gibbsian line ensembles. As such, we believe these models are amenable to an analysis similar to that described in Steps 1--4 above. 
  
  The connection between finite geodesics and the closeness of line ensembles has appeared earlier in the literature. In \cite{Hammond1,Hammond2}, the author obtained an upper bound on the probability of $k$  finite geodesics  having close endpoints. Similar ideas were used in \cite{Dauvergne23} to characterize all possible networks in the DL. However, in the context of infinite geodesics the use of this ideas seems to be  new, especially in the context of the N3G problem.    
 
 Using the results in \cite{BSS22,bhatia2023duality} and Theorem \ref{thm:4} we are able to complete the global description of infinite geodesics in the DL in Theorem \ref{thm:2}. As mentioned, a similar result has only been available for one other model in the KPZ class - the exponential LPP in \cite{JRAS19}, via the results in \cite{C11}. \vspace{5pt}\\

 To conclude, the main contribution of this paper is threefold
 \begin{enumerate}
 	\item We resolve the N3G problem for the directed landscape - the scaling limit of all models in the KPZ class.
 	\item Our proof  relies on the connection between random growth models and line ensembles, and therefore can be generalized to other metric-like models (and possibly to positive temperature models). We peruse this in a forthcoming work \cite{BS23}.
 	\item We complete the global geometric description of infinite geodesics in the directed landscape.
 \end{enumerate}

The rest of the paper is organized as follows. In the rest of this section we give  precise statements of our results. Section \ref{sec:DL} is dedicated to a brief account of the DL, the extended DL and its properties. In Section \ref{sec:fgeo} we essentially prove  \ref{st1}, i.e.\ showing that the N3G problem can be reduced to  finding a tight enough bound on the probability of  three disjoint geodesics in a trapezoid. Then, assuming such a bound in Proposition \ref{prop:2}, we prove Theorems \ref{thm:4} and \ref{thm:2}. The rest of the paper is devoted to  proving Proposition \ref{prop:2}. In Section \ref{sec:trp} we obtain \ref{st2}. In Section \ref{sec:1} we explain the main ideas behind the connection between geodesics with close endpoints and closeness of line ensembles. In Section \ref{sec:ubt} we  apply the strategy from Section \ref{sec:1} to   show how to go from  \ref{st2} via \ref{st3} to \ref{st4}, and prove Proposition \ref{prop:2}. Finally, in Section \ref{sec:Alb} we obtain the necessary  bound in \ref{st4}, i.e.\ we show that the probability that the first two lines of the Airy line ensemble are $\epsilon^{1/2}$ close to one another at any two spatial points in a large interval is of order $o(\epsilon^{1/2})$. This result is used as an input for Section \ref{sec:ubt}.

 \subsection{Acknowledgements}
 The author would like to thank Evan Sorensen for  reading earlier versions of this paper, and whose comments helped to improve the paper significantly. The author would also like to thank  Duncan Dauvergne for much helpful discussions about Section  \ref{sec:Alb} of the paper that resulted in a considerably simplified proof, and  Xuan Wu for an illuminating discussion at the workshop on 'The asymmetric exclusion process' at the Simons Center for Geometry and Physics. 
 
 The work of O.B. was partially supported by the Deutsche Forschungsgemeinschaft (DFG, German Research Foundation) under Germany’s Excellence Strategy--GZ 2047/1, projekt-id 390685813.

\subsection{Main results}
The directed landscape (DL) $\dl$ is a continuous function on $\R^4_\uparrow:=\{\bm{u}=(x,s;y,t)\in\R^4: s\leq t \}$, originally constructed in \cite{DOV22} as the scaling limit of  Brownian last passage percolation. It is believed to be the universal scaling limit of models in the KPZ universality class, which  has been verified for several integrable models in \cite{Dauvergne-Virag-21,Wu23}. The random function $\dl$ assigns weight on continuous paths $\pi:[s,t]\rightarrow \R$ via
\begin{equation}\label{eq136}
	\dl(\pi)=\inf_{k\in \N}\,\,\inf_{s=t_0<t_1<\ldots<t_k=t} \sum_{i=1}^k\dl\big([\pi(t_{i-1})],[\pi(t_i)]\big),
\end{equation}
where $[\pi(r)]=\big(\pi(r),r\big)\in\R^2$. From the triangle inequality 
\begin{equation}
	\dl(p,q)\geq \dl(p,u)+\dl(u,q) \qquad \forall (p,q),(p,u),(u,q)\in\R^4_\uparrow,
\end{equation}
it follows that 
\begin{equation}
\dl(\pi)\leq \dl\big([\pi(s)],[\pi(t)]\big).
\end{equation}
A path $\pi$ is called a geodesic if the last display holds with equality. It is not hard to verify that if $\pi:[s,t]\rightarrow \R$ is a geodesic then any restriction of $\pi$ to a subinterval of $[s,t]$ is again a geodesic. Between any two points $u,v\in\R_\uparrow^4$, with probability 1,  there exists a unique geodesic between the two points i.e.\
\begin{equation}\label{eq122}
	\P(\text{there exists a unique geodesic between the points $u$ and $v$})=1\qquad \forall u,v\in \R^4_{\uparrow}.
\end{equation}
 There are, however, exceptional points on the plane where uniqueness of geodesics fails, and give rise to interesting fractal phenomena \cite{BGH21, BGH22, Dauvergne23}. 
 
 An infinite geodesic is an infinite path $\pi:[s,\infty)\rightarrow \R$ such that the restriction of $\pi$ on $[r,t]$ ($r\geq s$) -  $\pi|_{[r,t]}$ is a geodesic. We say an infinite geodesic $\pi:[s,\infty)\rightarrow \R$ is $\xi$-directed for some $\xi\in\R$, if $\pi(r)/r\rightarrow\xi$ as $r\rightarrow \infty$, in particular infinite geodesics in this work evolve upward. In the DL, infinite geodesics were first constructed and studied in \cite{Rahman-Virag-21} when either the point from which the geodesics emanate or the direction in which they are directed is fixed. These results were then refined in \cite{BSS22} and obtained independently in \cite{Ganguly-Zhang-2022}. We will dive into the intricate properties of  infinite geodesics in the DL in a moment, but for now let us just mention that (\cite{Rahman-Virag-21,Ganguly-Zhang-2022})
 \begin{equation}\label{eq135}
 	\begin{aligned}
 		&\text{from any \textbf{fixed}  point $p\in\R^2$ and $\xi\in\R$, with probability one,  there exists a} \\
 		&\text{ unique geodesic $\pi^\xi_p$ starting from $p$ in the direction $\xi$.  }
 	\end{aligned}
 \end{equation}  
\subsubsection*{The Busemann process and Busemann geodesics}
One of the main tools in studying infinite geodesics in the DL and other metric-like models is  the \textit{Busemann process}. Fix $\xi\in\R$, then the  Busemann function associated with $\xi$
\begin{equation}\label{eq114}
	\{W_{\xi}(p,q):p,q\in\R^2\}
\end{equation}
retains much information on infinite geodesics in the DL going in direction $\xi$. In the DL, the Busemann function was first constructed in \cite{Rahman-Virag-21} and independently in \cite{Ganguly-Zhang-2022}. For any two  points on the plane $p$ and $q$, the value $W_\xi(p,q)$ can be obtained as the difference of the $\dl$-distance between the points $p$ and $q$ to the point of coalescence of the two infinite geodesics emanating from $p$ and $q$ i.e.\
\begin{equation}
	W_{\xi}(p,q)=\dl(p,z_c)-\dl(q,z_c)
\end{equation}
where the point $z_c:=[\pi_p^\xi(t_c)]$, with $t_c=\inf\{r\in\R:\pi^\xi_p(r)\cap\pi^\xi_q(r)\neq \emptyset\}$, is the point of coalescence of $\pi_p^\xi$ and $\pi_q^\xi$. 
 More precisely,   for a fixed direction $\xi\in\R$ it was shown in \cite{Rahman-Virag-21} that for any sequence $(z^n_1,z^n_2)\in\R^2$ such that $z^n_1/z^n_2\rightarrow \xi$, there exists $N\in\N$ such that for any $n\geq N$ it holds that  
\begin{equation}\label{eq115}
	W_\xi(p,q)=\dl(p,z_n)-\dl(q,z_n).
\end{equation}
 For $s<t$ and $x,y\in\R$ it holds 
 \begin{equation}\label{eq112}
 	W_\xi(x,s;y,t)=\sup_{z\in\R}\big\{ \dl(x,s;z,t)+W_\xi(z,t;y,t)\big \}.
 \end{equation}
The variational identity above gives rise to one direct connection between the Busemann process and infinite geodesics -   the supremum in \eqref{eq112} is attained exactly at points $z$ such that $(z,t)$ lies on an  infinite geodesic starting from the point $(x,s)$. Thus, by varying the variable $t$ form $s$ to infinity, \eqref{eq112} allows us to etch the paths of all possible geodesics going in direction $\xi$ from the point $(x,s)$. It was shown in \cite{Rahman-Virag-21} that  $x\mapsto W_\xi(0,t;x,t)$ is distributed as a Brownian motion with diffusivity $\sqrt{2}$ and drift $2\xi$. 

From the properties of the DL and \eqref{eq112} it is then possible to refine the picture in \eqref{eq135} - it was shown in \cite{Rahman-Virag-21}  that given a fixed direction $\xi$, all geodesics in direction $\xi$ starting from any point on the plane  must coalesce. 
 
 In order to understand the behaviour of infinite geodesics going in a \textit{random direction}, one has to consider a coupling of all Busemann functions on one event of full probability. This was obtained in \cite{BSS22}, where the Busemann process
 \begin{equation}\label{eq113}
 	\{W_{\xi\sig}(p,q):\xi\in \R,\sigg\in\{+,-\},p,q\in\R^2\}
 \end{equation}
  was constructed. Note the difference between \eqref{eq114} and \eqref{eq113}, where in the latter we added the parameterization by $\xi\sigg$. The  function  $\xi\mapsto W_{\xi\sigg}(p,q)$ with $\sigg=+$ ($\sigg=-$) is a right-continuous (left-continuous) function obtained as a right(left) continuation of the Busemann function from \eqref{eq115} evaluated at rational directions. The Busemann function $W_{\xi\sig}(\cdot,\cdot)$ with $\sig=+$ ($\sig=-$) holds much information about the behavior of infinite geodesics just to the right (left) of the geodesics going in the direction $\xi$. These Busemann functions give rise to two families of $\xi$-directed infinite geodesics called Busemann geodesics. We now  define this rigorously. For $(x,s)\in\R^2$, $\xi\in\R$ and $\sigg\in\{-,+\}$ define
  \begin{equation}
  	A_{(x,s)}^{\xi\sig}=\{(y,t)\in \R\times\Q:t>s,y\in\argmax{z}\,\{\dl(x,s;z,t)+W_{\xi\sig}(z,t;0,t)\},
  \end{equation}
  and the set
  \begin{equation}
  	\mathcal{A}_{(x,s)}^{\xi\sig}=\text{Cls}(A_{(x,s)}^{\xi\sig}),
  \end{equation}   
  where for a set $A\subseteq \R^2$, we let $\text{Cls}(A)$ denote its closure in $\R^2$. For a geodesic $\pi:[s,t]\rightarrow\R$, we let $[\pi]=[\pi(r)]_{r\in[s,t]}$ denote the graph of $\pi$ in $\R^2$. Similarly we define $[\pi]$ for infinite geodesics. 
  \begin{definition}\label{def:1}
  	Let $(x,s)\in \R^2,\xi\in \R$ and $\sigg\in \{-,+\}$. A $\xi\sigg$ \textit{Busemann geodesic} starting from $(x,s)$  is a continuous path $\pi:[s,\infty)\rightarrow \R$ such that $\pi(s)=x$ and $[\pi]\subseteq \mathcal{A}_{(x,s)}^{\xi\sig}$. 
  \end{definition}
  \begin{remark}
  	Definition \ref{def:1} is different than \cite[Definition 5.10]{BSS22} but equivalent to it. Arguably, the advantage of Definition \ref{def:1} is that it clearly shows how Busemann functions give rise to Busemann geodesics.   
  \end{remark}
  \begin{definition}
  	The leftmost Busemann geodesic $\pi_{(x,s)}^{\xi\sig,L}$  is a Busemann geodesic starting from $(x,s)$ in direction $\xi\sig$ such that for any other Busemann geodesic $\eta$ starting from  $(x,s)$ in direction $\xi\sig$ it holds 
  	\begin{equation}
  		\pi_{(x,s)}^{\xi\sig,L}(t)\leq \eta(t).
  	\end{equation}
  	 Similarly we define the rightmost Busemann geodesic $\pi_{(x,s)}^{\xi\sig,R}$ by reversing the inequality in the last display.
  \end{definition}
  Next we describe the random set $\Xi$ of non-uniqueness of geodesics. 
   For $p,q\in\R^2$ we define the sets 
  \begin{equation}
  	\Xi(p;q)=\{\xi\in\R:W_{\xi-}(p;q)\neq W_{\xi+}(p,q)\} \quad \text{ and }\quad  \Xi=\bigcup_{p,q\in\R^2}\Xi(p;q).
  \end{equation}
  It was shown in \cite[Theorem 5.5 (iii)]{BSS22} that the set $\Xi$ is almost surely dense and countable. Moreover, for a fixed $\xi\in\R$ it holds that $\P(\xi\in\Xi)=0$. The next result gives  more information about the behavior of infinite geodesics via the relation between the Busemann geodesics and the set $\Xi$.
  \begin{theorem}\label{thm:3}
  	The following hold on a single event of probability one.
  	\begin{enumerate}[label=\normalfont(\roman*)]
  		\item {\cite[Proposition 33]{bhatia2023duality}}\label{ita} All infinite geodesics in the directed landscape are semi-infinite i.e.\ there are no bi-infinite geodesics in the directed landscape.
  		\item \cite[Theorem 6.5]{BSS22} For all $(x,s)\in \R^2$ and $\xi\in\R$, $\pi_{(x,s)}^{\xi-,L}(t)$ is the leftmost and $\pi_{(x,s)}^{\xi+,R}(t)$ the rightmost infinite geodesic  in direction $\xi$, i.e.\ for any infinite geodesic starting from $(x,s)$ in direction $\xi$
  		\begin{equation}
  			\pi_{(x,s)}^{\xi-,L}(t)\leq \eta(t) \leq \pi_{(x,s)}^{\xi+,R}(t) \quad \text{ for all $t\geq s$}.
  		\end{equation}
  		\item \cite[Theorem 7.1 (i)]{BSS22} For all $p,q\in\R^2$, if $\pi_1$ and $\pi_2$ are $\xi\sigg$ Busemann geodesics from $p$ and $q$ respectively, then $\pi_1$ and $\pi_2$ coalesce. If the first point of intersection of the two geodesics is not $p$ or $q$, then the first point of intersection is the coalescence point of the two geodesics.
  		\item \cite[Theorem 7.3]{BSS22} The following are equivalent
  		\begin{itemize}
  			\item $\xi\notin \Xi$
  			\item All semi-infinite geodesics in direction $\xi$ coalesce.
  		\end{itemize}
  	\end{enumerate}
  \end{theorem}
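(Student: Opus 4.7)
The plan is to note that Theorem \ref{thm:3} is entirely a compilation of previously established results, so the only thing to prove is that the four statements can be made to hold simultaneously on a single event of full probability. Each of the cited results produces its own probability-one event, and since a countable intersection of such events is again of full probability, one only needs to verify that the underlying events can be indexed countably or, where the statement is uniform in continuous parameters, that the probability-one event delivered by the cited reference is already uniform.

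Concretely, I would proceed item by item. For \ref{ita}, \cite[Proposition 33]{bhatia2023duality} provides a single event $\Omega_1$ of probability one on which no bi-infinite geodesic exists anywhere in the directed landscape; this event is already uniform in all choices of endpoints. For (ii), \cite[Theorem 6.5]{BSS22} yields an event $\Omega_2$ on which, for all $(x,s)\in\R^2$ and all $\xi\in\R$, the Busemann geodesics $\pi_{(x,s)}^{\xi-,L}$ and $\pi_{(x,s)}^{\xi+,R}$ sandwich every $\xi$-directed infinite geodesic from $(x,s)$. For (iii), \cite[Theorem 7.1 (i)]{BSS22} gives an event $\Omega_3$ on which the coalescence property of any two $\xi\sig$ Busemann geodesics holds for all $\xi$, $\sig$ and all $p,q\in\R^2$. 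For (iv), \cite[Theorem 7.3]{BSS22} produces an event $\Omega_4$ on which the equivalence between $\xi\notin\Xi$ and the coalescence of all $\xi$-directed semi-infinite geodesics holds for all $\xi\in\R$.

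The conclusion then follows on the intersection $\Omega_0 := \Omega_1\cap\Omega_2\cap\Omega_3\cap\Omega_4$, which has probability one. The only point one should double-check is that each of the four cited theorems indeed delivers a \emph{uniform} almost sure statement, rather than a statement of the form ``for each fixed parameter, almost surely\ldots''; this is clear from the phrasing in the references above, since each asserts the conclusion for all points of $\R^2$ and all directions simultaneously. The main (minor) care is thus bookkeeping to confirm that the events are genuinely the same object across the three references, but no new probabilistic argument is required for this theorem.
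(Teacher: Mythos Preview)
Your proposal is correct and matches the paper's treatment: Theorem~\ref{thm:3} is stated purely as a compilation of cited results from \cite{bhatia2023duality} and \cite{BSS22}, and the paper provides no proof beyond the citations themselves. Your observation that the single full-probability event is obtained by intersecting the uniform events delivered by the references is exactly the implicit content, and nothing further is needed.
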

  To sum up in words, the picture of infinite geodesics in the DL is as follows. All infinite geodesics are semi-infinite. Typically, for a fixed point $p\in\R^2$ and a direction $\xi\in\R$ there exists a unique geodesic starting from $p$ and going in direction $\xi$.  There  exists however,  a random dense and countable set of exceptional directions $\Xi$ for which the following holds with probability one; if  $\xi\notin\Xi$, there are exceptional points in $\R^2$ for which at least two geodesics sharing a starting point split immediately and later coalesce.  If $\xi\in \Xi$ then form every point $p\in\R^2$ emanate two non-coalescing $\xi$-directed geodesics  sandwiching all geodesics emanating from $p$, these are the Busemann geodesics $\pi_{p}^{\xi-,L}$ (leftmost) and $\pi_{p}^{\xi+,R}$(rightmost).  
  
  What is therefore left open  is what happens between  the geodesics $\pi_{p}^{\xi-,L}$ and $\pi_{p}^{\xi+,R}$; Are there other infinite geodesics  directed at $\xi$ that do not coalesce with these two geodesics? Our main result answers this question in somewhat more generality.
   \begin{theorem}[Solution to the N3G problem in the DL] \label{thm:4}
  	With probability one, for any direction $\xi\in\R$ there are no three infinite non-coalescing  $\xi$-directed geodesics.
  \end{theorem}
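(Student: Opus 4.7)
The plan is to argue by contradiction, so suppose with positive probability some point in the plane emits three non-coalescing $\xi$-directed infinite geodesics. By translation invariance of the DL we may assume this point is the origin, and the direction $\xi$ is some random real number. The first step is to reduce to a finite quantitative statement. Transversal fluctuation estimates for infinite geodesics (available from \cite{Rahman-Virag-21,BSS22,Ganguly-Zhang-2022}) imply that at height $T$ the three geodesics all lie in a spatial window of size $O(T^{2/3})$, while over some random but bounded initial time window they have already branched apart. Consequently the event in question is contained in the union, over all candidate positions of the upper base, of the event that a trapezoid of height $T$, lower base of order $1$, and upper base of length of order $T^{2/3}$ contains three pairwise disjoint geodesics. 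Since $\xi\in\R$ is arbitrary we must cover a spatial range of order $T$, yielding $O(T^{1/3})$ candidate trapezoids, so the task reduces to showing that the probability a single such trapezoid carries three disjoint geodesics is $o(T^{-1/3})$; this is the content of Proposition \ref{prop:2}.

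Two further reductions convert Proposition \ref{prop:2} into a question about the Airy line ensemble. Using the $1{:}2{:}3$ scaling invariance of the DL, one can rescale the trapezoid so that the lower base has length $\epsilon$ and the upper base has length of order $\log(\epsilon^{-1})$ for a small parameter $\epsilon$ depending on $T$, with the desired bound becoming $o(\epsilon^{1/2})$. A simple combinatorial observation --- three disjoint monotone paths split into two overlapping consecutive pairs, each of whose geodesics are $\epsilon$-close at both ends --- then reduces matters to bounding the probability that this narrow, tall trapezoid carries two pairs of disjoint geodesics with $\epsilon$-close endpoints. Next, by prepending and appending short disjoint geodesic segments to each pair, one passes from a pair of disjoint geodesics with merely $\epsilon$-close endpoints to a pair of disjoint paths with \emph{common} endpoints, at the cost of a controlled loss in weight. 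Applied to both pairs, and combined with the DL version of Greene's theorem proved in \cite{Dauvergne-Zhang-2021}, which expresses the maximum weight of a pair of disjoint paths with prescribed endpoints in terms of the top two curves of the Airy line ensemble, this converts the event into the event that the top two Airy curves come within $\epsilon^{1/2}$ of each other at two distinct spatial points inside a window of length $O(\log(\epsilon^{-1}))$.

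The main obstacle, in my view, is proving that this near-touch event has probability $o(\epsilon^{1/2})$. A pointwise estimate via one-point Tracy--Widom statistics only yields $O(\epsilon^{1/2})$, and a crude union bound over pairs of points in the window would moreover lose factors polynomial in $\log(\epsilon^{-1})$, so the estimate must genuinely exploit both-point information and be obtained uniformly over the whole window. The natural tool is the Brownian Gibbs resampling property of the Airy line ensemble: resampling the top curve on a suitable interval above the second curve reduces the near-touch event to a small-ball question about a Brownian bridge conditioned to stay above a lower envelope, in the spirit of the near-touch estimates of \cite{Hammond1,dauvergne2023wiener}. A secondary but delicate difficulty arises in the appending step, since the upper base is much longer than the lower base, so the cap paths must be engineered to remain disjoint with high probability while absorbing only a polylogarithmic amount into the final estimate.
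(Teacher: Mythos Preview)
Your proposal follows essentially the same route as the paper: reduction to finitely many trapezoids via transversal fluctuation bounds and a union bound, rescaling to a trapezoid of lower base $\epsilon$ and upper base $L(\epsilon^{-1})$, the ``three disjoint geodesics $\Leftrightarrow$ two ordered pairs with $\epsilon$-close endpoints'' lemma (Lemma~\ref{lem:dis}), the bouquet plus Greene-type reduction to Airy line closeness at two points, and a Brownian Gibbs estimate for the two-point near-touch probability.

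Two minor corrections. First, Theorem~\ref{thm:4} concerns three $\xi$-directed geodesics from three possibly distinct starting points; your reduction ``by translation invariance we may assume this point is the origin'' literally yields only Corollary~\ref{thm:5}. The paper allows the three starting points to lie anywhere in a unit box $\smbox$, shows $\P(\Thg^{\cdir})=0$ for each $\cdir$, and then scales (the event $\Thg^\infty_\alpha$) --- no new idea, but it should be said. Second, you overstate the difficulty in the last two steps. The bouquets connect, for each pair separately, two endpoints that are already $\epsilon$-close to a common point, and are disjoint \emph{by construction} as disjoint optimizers in the extended landscape (Theorem~\ref{thm:do}); the length $L(\epsilon^{-1})$ of the upper base plays no role in their construction or cost. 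And once the two-point estimate
\[
\P\big(\ele_1-\ele_2<\epsilon^{1/2-\delta}\text{ at }x_1\text{ and }x_2\big)\lesssim\epsilon^{3-6\delta}\big(1+|x_1-x_2|^{-3/2}\big)
\]
of Proposition~\ref{prop:1} is in hand, a plain union bound over the $O(\epsilon^{-2}L^2)$ mesh pairs already gives $o(\epsilon^{1/2})$ --- this is exactly what the paper does in the proof of Proposition~\ref{prop:2} --- so no uniform-in-window refinement beyond the union bound is needed; the polylogarithmic losses from $L$ are absorbed by the $\epsilon$-power slack.
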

  The following result is an immediate corollary. 
  \begin{corollary}\label{thm:5}
  	With probability one, for any point on the plane $p\in \R^2$ and a direction $\xi\in\R$ there exists at most two infinite non-coalescing  $\xi$-directed geodesics starting from the point $p$.  
  \end{corollary}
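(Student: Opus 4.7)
The plan is to derive Corollary \ref{thm:5} as an immediate consequence of Theorem \ref{thm:4}, so the proof proposal is essentially a short contrapositive argument rather than any fresh analytic work. I would fix the full-probability event $\Omega_0$ on which the conclusion of Theorem \ref{thm:4} holds: on $\Omega_0$, for every $\xi \in \R$, there do not exist three infinite $\xi$-directed geodesics in the directed landscape that are pairwise non-coalescing (with no requirement that they share a starting point).

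Working on $\Omega_0$, I would fix arbitrary $p = (x,s) \in \R^2$ and $\xi \in \R$ and argue by contradiction. Suppose that at least three infinite $\xi$-directed geodesics $\pi_1, \pi_2, \pi_3 : [s,\infty) \to \R$ start from $p$ and are pairwise non-coalescing. These three paths are in particular three infinite $\xi$-directed geodesics in the directed landscape whose graphs are pairwise non-coalescing in the sense used in Theorem \ref{thm:4}. This directly violates Theorem \ref{thm:4} on $\Omega_0$, which is the required contradiction; hence on $\Omega_0$ at most two such non-coalescing geodesics can emanate from any given point $p$ in any given direction $\xi$.

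The only point worth a moment's care is the interpretation of \emph{non-coalescing} when the three paths are forced to agree at the common starting point $p$: coalescence here means that the geodesics merge and remain equal on some interval $[t,\infty)$, not that they intersect at isolated times. With that convention, which is the one used throughout Section 1 of the paper and in the statement of Theorem \ref{thm:4}, three geodesics starting at $p$ that are pairwise non-coalescing are honestly three distinct paths at infinity, and Theorem \ref{thm:4} forbids the configuration. Since there is no obstacle beyond invoking the main theorem, the proof reduces to the two sentences above; all the substance lives in the proof of Theorem \ref{thm:4} itself.
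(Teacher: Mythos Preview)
Your proposal is correct and matches the paper's approach exactly: the paper presents Corollary \ref{thm:5} as an immediate corollary of Theorem \ref{thm:4} with no separate proof, and your contrapositive argument (three non-coalescing $\xi$-directed geodesics from a single point are in particular three non-coalescing $\xi$-directed geodesics, contradicting Theorem \ref{thm:4}) is precisely what is meant.
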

  Combining Corollary \ref{thm:5} with the results in \cite{BSS22,bhatia2023duality} (Theorem \ref{thm:3}) we obtain a characterization of  geodesics in the DL.
  \begin{theorem}\label{thm:2}
  	With probability one, all infinite geodesics in the directed landscape are Busemann. 
  \end{theorem}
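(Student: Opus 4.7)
The plan is to take an arbitrary infinite geodesic $\pi$ and show it must be Busemann. I would work on the single event $\Omega_0$ of full probability on which the conclusions of Theorem \ref{thm:3}, Corollary \ref{thm:5}, and the (previously established) fact that every infinite geodesic in the directed landscape has a well-defined direction (see e.g.\ \cite{BSS22,Ganguly-Zhang-2022}) all hold simultaneously. By Theorem \ref{thm:3}(i), $\pi$ is semi-infinite on some $[s,\infty)$ with base point $p=(\pi(s),s)$ and direction $\xi\in\R$. The goal is to verify Definition \ref{def:1}, namely $[\pi]\subseteq\mathcal{A}_p^{\xi-}$ or $[\pi]\subseteq\mathcal{A}_p^{\xi+}$.

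Corollary \ref{thm:5} (equivalent to Theorem \ref{thm:4}) enters as follows. The three $\xi$-directed infinite geodesics from $p$, namely $\pi_p^{\xi-,L}$, $\pi$, and $\pi_p^{\xi+,R}$, satisfy the sandwich $\pi_p^{\xi-,L}(t)\le\pi(t)\le\pi_p^{\xi+,R}(t)$ for all $t\ge s$ by Theorem \ref{thm:3}(ii). By Corollary \ref{thm:5} at least two of them coalesce, and the sandwich forces $\pi$ itself to coalesce with at least one of the extreme Busemann geodesics: if both extremes coalesce at some time $t_0$ then the sandwich forces $\pi$ to agree with them for $t\ge t_0$, so $\pi$ coalesces with both. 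Without loss of generality assume $\pi$ coalesces with $\pi_p^{\xi-,L}$ at time $t^\star\ge s$, so $\pi(t)=\pi_p^{\xi-,L}(t)$ for every $t\ge t^\star$, and in particular the tail $[\pi|_{[t^\star,\infty)}]$ is already in $\mathcal{A}_p^{\xi-}$.

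What remains is to show that the initial segment $\pi|_{[s,t^\star]}$ --- a finite geodesic from $p$ to $q:=(\pi(t^\star),t^\star)$ that may differ from the corresponding segment of $\pi_p^{\xi-,L}$ --- also lies in $\mathcal{A}_p^{\xi-}$. For each rational $t\in(s,t^\star)$ I would verify that $\pi(t)$ is an argmax in the variational identity \eqref{eq112} defining $A_p^{\xi-}$. The tool is a standard Busemann-distance identity that follows from the construction in \cite{BSS22} together with the Busemann coalescence in Theorem \ref{thm:3}(iii): if $T\ge t^\star$ is large enough that the $\xi-$ Busemann geodesics from $p$, from $(0,t)$, and from $(\pi(t),t)$ have all coalesced with $\pi_p^{\xi-,L}$ by time $T$, then $W_{\xi-}(y,r;0,t)=\dl(y,r;\pi(T),T)-\dl(0,t;\pi(T),T)$ for $(y,r)\in\{p,(\pi(t),t)\}$. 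Since $\pi$ is a geodesic from $p$ to $(\pi(T),T)$ passing through $(\pi(t),t)$, additivity of $\dl$ along $\pi$ gives $\dl(\pi(s),s;\pi(t),t)+\dl(\pi(t),t;\pi(T),T)=\dl(\pi(s),s;\pi(T),T)$, and combining with the two instances of the identity produces
\[
\dl(\pi(s),s;\pi(t),t)+W_{\xi-}(\pi(t),t;0,t)=W_{\xi-}(\pi(s),s;0,t),
\]
exhibiting $\pi(t)$ as an argmax and placing $(\pi(t),t)\in A_p^{\xi-}$. Taking closures over rational $t$ yields $[\pi]\subseteq\mathcal{A}_p^{\xi-}$, so $\pi$ is a $\xi-$ Busemann geodesic. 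The genuinely hard step is Corollary \ref{thm:5}; once N3G is in hand the variational computation is essentially routine, with the only real care being to verify the Busemann identity simultaneously on the single good event uniformly over all relevant base points and random directions, a reduction that uses the countability of $\Xi$ and the right-/left-continuity of the Busemann process from \cite{BSS22}.
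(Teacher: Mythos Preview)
Your argument is correct and follows the same route as the paper: use N3G (Corollary~\ref{thm:5}) together with the sandwich from Theorem~\ref{thm:3}(ii) to force coalescence of $\pi$ with one of the extreme Busemann geodesics, and then conclude that $\pi$ itself is Busemann. The paper's proof is extremely terse at the second step---it simply asserts that coalescence with a Busemann geodesic forces $\pi$ to be Busemann ``from the definition''---whereas you unpack this by verifying the variational identity \eqref{eq112} at each rational time via the coalescence representation of $W_{\xi-}$; this is a welcome elaboration rather than a different method, and your closing caveat about handling the Busemann identity uniformly over random $\xi\sigg$ on a single event is exactly the technical point one must import from \cite{BSS22}.
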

  Combining our results with Theorem \ref{thm:3} we obtain the full qualitative picture of infinite geodesics in the directed landscape.
  \begin{theorem}\label{thm:1}
  	The following statements hold on a single event of full probability. There exists a random countably infinite dense subset $\Xi$ of $\R$ such that the following hold
  	\begin{enumerate}[label=\normalfont(\roman*)]
  		\item {\cite[Proposition 33]{bhatia2023duality}} All infinite geodesics are semi-infinite.
  		\item \cite[Theorem 6.3]{BSS22} Every semi-infinite geodesic has a direction $\xi\in \R$. From each initial point $p\in\R^2$ and in each direction $\xi\in\R$, there exists at least one semi-infinite geodesic from $p$ in direction $\xi$.
  		\item \cite[Theorem 7.1 (i)]{BSS22} When $\xi\notin\Xi$, all semi-infinite geodesics in direction $\xi$ coalesce. There exists a random set of inital points, of zero planar Lebesgue measure, outside of which the semi-infinite geodesic in each direction $\xi\in\Xi$ is unique. 
  		\item \label{it:ngr} When $\xi\in\Xi$, there exists \textbf{exactly} two families of semi-infinite geodesics in direction $\xi$, called the $\xi-$ and $\xi+$ geodesics. From every initial point $p\in\R^2$ there exists both  $\xi-$ geodesics and $\xi+$ geodesics which eventually  separate and never come back together. All $\xi-$ geodesics coalesce, and all $\xi+$ geodesics coalesce. 
  	\end{enumerate}
  	\begin{proof}
  		Item \ref{it:ngr} was proven in \cite[Theorem 2.5]{BSS22} with the word 'at least' instead of  'exactly'. Our Corollary \ref{thm:4} allows us to make this replacement in the statement of the result. 
  	\end{proof}
  \end{theorem}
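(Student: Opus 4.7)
The plan is to observe that items (i)--(iii) of Theorem \ref{thm:1} are direct restatements of results already recorded in Theorem \ref{thm:3} (parts (i), (ii), (iv)) together with \cite[Theorem 6.3]{BSS22}, so nothing new needs to be done for them. All the work is concentrated in item (iv), and the only piece there that is not already in \cite[Theorem 2.5]{BSS22} is the replacement of ``at least two families'' by ``exactly two families''. This is precisely what Corollary \ref{thm:5} is designed to supply, so the proof is a short contradiction argument.

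Fix $\xi\in\Xi$ and a point $p\in\R^2$. I would invoke \cite[Theorem 2.5]{BSS22} (recorded via Theorem \ref{thm:3}(ii)--(iv)) to produce the leftmost $\xi-$ Busemann geodesic $\pi_p^{\xi-,L}$ and the rightmost $\xi+$ Busemann geodesic $\pi_p^{\xi+,R}$ from $p$, which eventually separate and never meet again. By Theorem \ref{thm:3}(ii), every $\xi$-directed semi-infinite geodesic $\eta$ starting from $p$ is sandwiched between these two. Moreover, by Theorem \ref{thm:3}(iii), any $\xi-$ geodesic from any point coalesces with $\pi_p^{\xi-,L}$ and any $\xi+$ geodesic from any point coalesces with $\pi_p^{\xi+,R}$. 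Hence the assertion that there are ``exactly two families'' is equivalent to the assertion that every $\xi$-directed geodesic from $p$ coalesces with at least one of $\pi_p^{\xi-,L}$ or $\pi_p^{\xi+,R}$.

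Suppose, toward a contradiction, that on an event of positive probability there exists some $\xi\in\Xi$, some $p\in\R^2$ and some $\xi$-directed semi-infinite geodesic $\eta$ from $p$ that coalesces with neither $\pi_p^{\xi-,L}$ nor $\pi_p^{\xi+,R}$. Then $\pi_p^{\xi-,L}$, $\eta$ and $\pi_p^{\xi+,R}$ are three pairwise non-coalescing $\xi$-directed infinite geodesics all starting from the same point $p$, which is exactly the configuration ruled out by Corollary \ref{thm:5}. This contradiction upgrades the conclusion of \cite[Theorem 2.5]{BSS22} from ``at least two'' to ``exactly two'' simultaneously for every $\xi\in\Xi$ and every starting point $p$, on the single full-probability event on which Corollary \ref{thm:5} and the results of \cite{BSS22, bhatia2023duality} hold.

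There is essentially no obstacle here beyond keeping track of the events: the content of the theorem has been distributed into (a) Corollary \ref{thm:5}, which is the hard input coming from Theorem \ref{thm:4}, and (b) the already-established qualitative picture from \cite{BSS22, bhatia2023duality}. The only care needed is to ensure that all of these facts are valid simultaneously on one full-probability event (which they are, since each is itself a full-probability statement and countable intersections of such events remain of full probability), so that the quantifier ``for all $\xi\in\R$ and all $p\in\R^2$'' in item (iv) can be asserted on that common event.
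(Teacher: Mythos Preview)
Your proposal is correct and follows essentially the same approach as the paper: the paper's proof is a one-liner observing that \cite[Theorem 2.5]{BSS22} gives item (iv) with ``at least'' in place of ``exactly'', and that Corollary \ref{thm:5} (equivalently Theorem \ref{thm:4}) supplies the upgrade. You simply spell out the contradiction argument behind this upgrade in more detail, which is fine but not required.
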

  \subsection*{Notation}
  We use $w,x,y,z$ to denote values in $\R$. We use $p,q,u,v$ to denote points in $\R^2$.  For $a<b$, we denote by $\llbracket a,b \rrbracket $ all integers between $a$ and $b$ i.e.\ $\llbracket a,b \rrbracket =\Z\cap [a,b]$. If $p\in \R^2$, we denote by $p_1$ and $p_2$ the elements of $p$, i.e.\ $p=(p_1,p_2)$. We let $\R^k_{>}$ denote the set of ordered $k$-tuples, i.e.\ $(x_1,\ldots,x_k)$ where $x_1>x_2\ldots>x_k$.
   \begin{figure}[!h]
  	\centering
  	\begin{tabular}[c]{cc}
  		\begin{subfigure}[c]{0.5\textwidth}
  			\includegraphics[width=8cm]{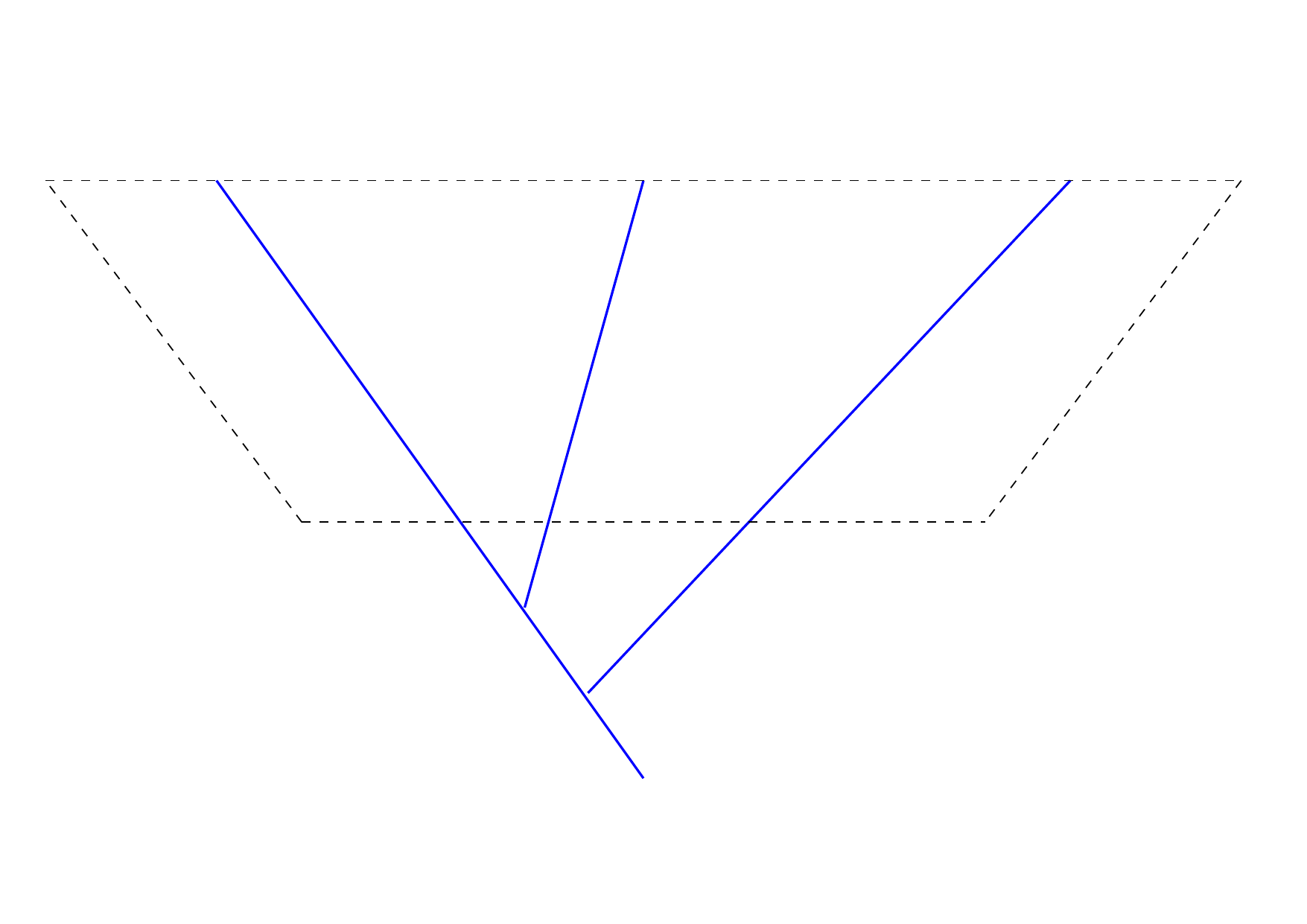}
  			\caption{The existence of a point from which at least three geodesics with the same direction emanate implies that one of $T^{1/3}$ trapezoids of lower and upper bases of order $O(1)$ and $O(T^{2/3})$ respectively, has three disjoint geodesics.}
  			\label{fig:5}
  		\end{subfigure}&
  		\begin{subfigure}[c]{0.5\textwidth}
  			\includegraphics[width=8cm]{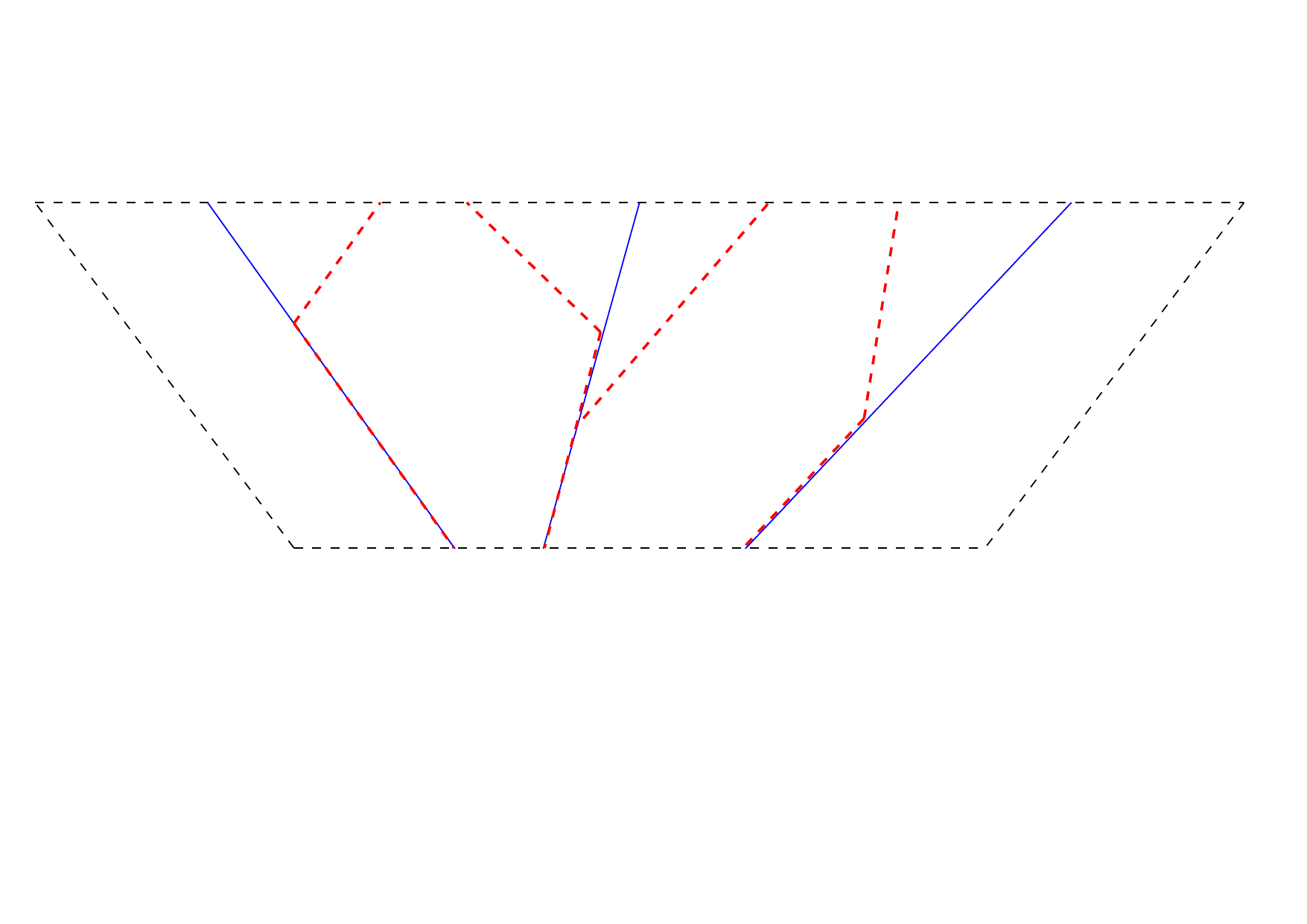}
  			\caption{The previous picture can be shown to imply that there exist two pairs of disjoint geodesics with $\epsilon$-close endpoints in a trapezoid with a lower and upper bases of order $\epsilon$ and $\log(\epsilon^{-1})$ respectively.}
  			\label{fig:6}
  		\end{subfigure}\\
  		\begin{subfigure}[c]{0.5\textwidth}
  			\includegraphics[width=8cm]{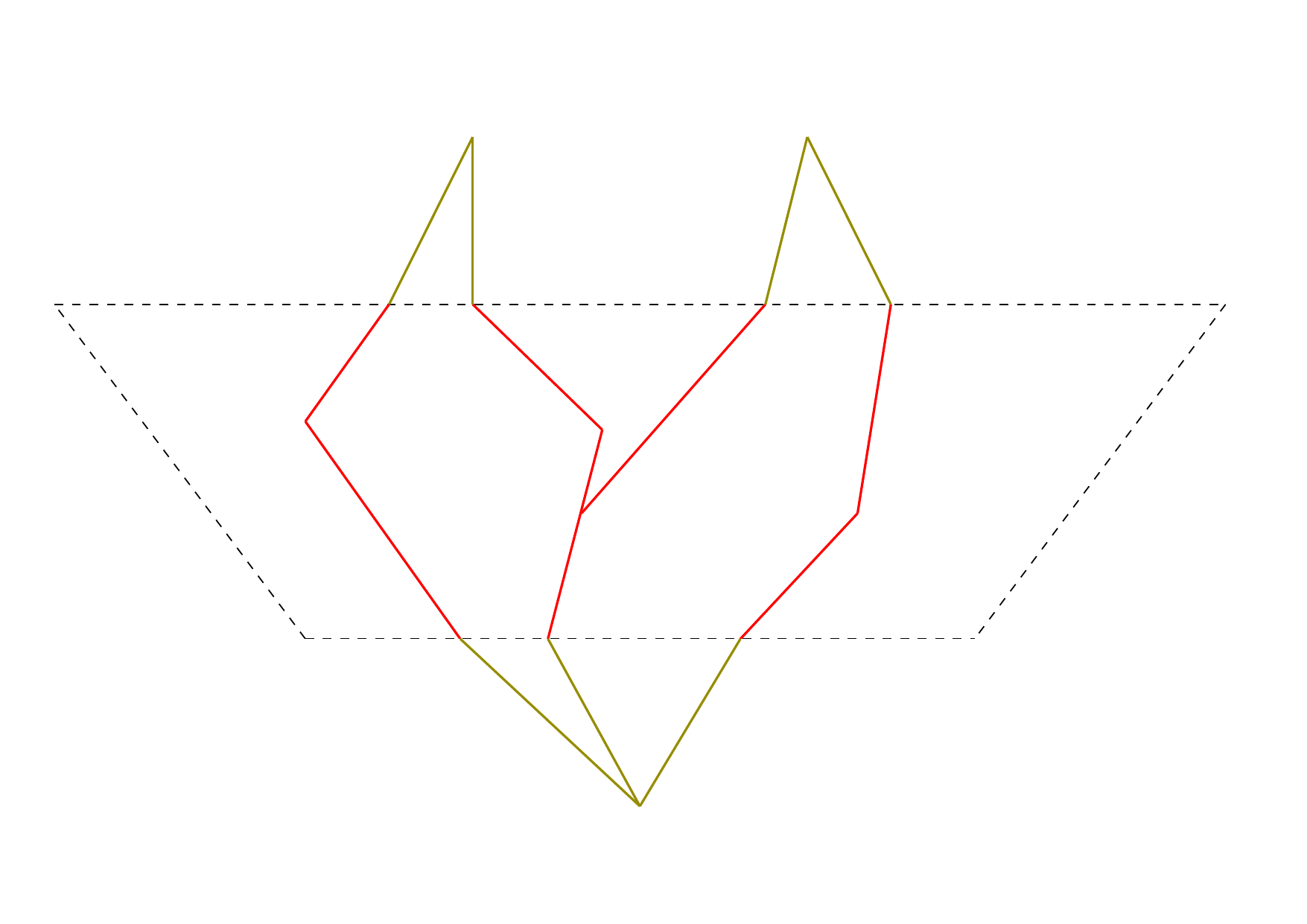}
  			\caption{From the previous picture one can construct two pairs of disjoint paths with shared endpoints.}
  			\label{fig:7}
  		\end{subfigure}&
  		\begin{subfigure}[c]{0.5\textwidth}
  			\includegraphics[width=8cm]{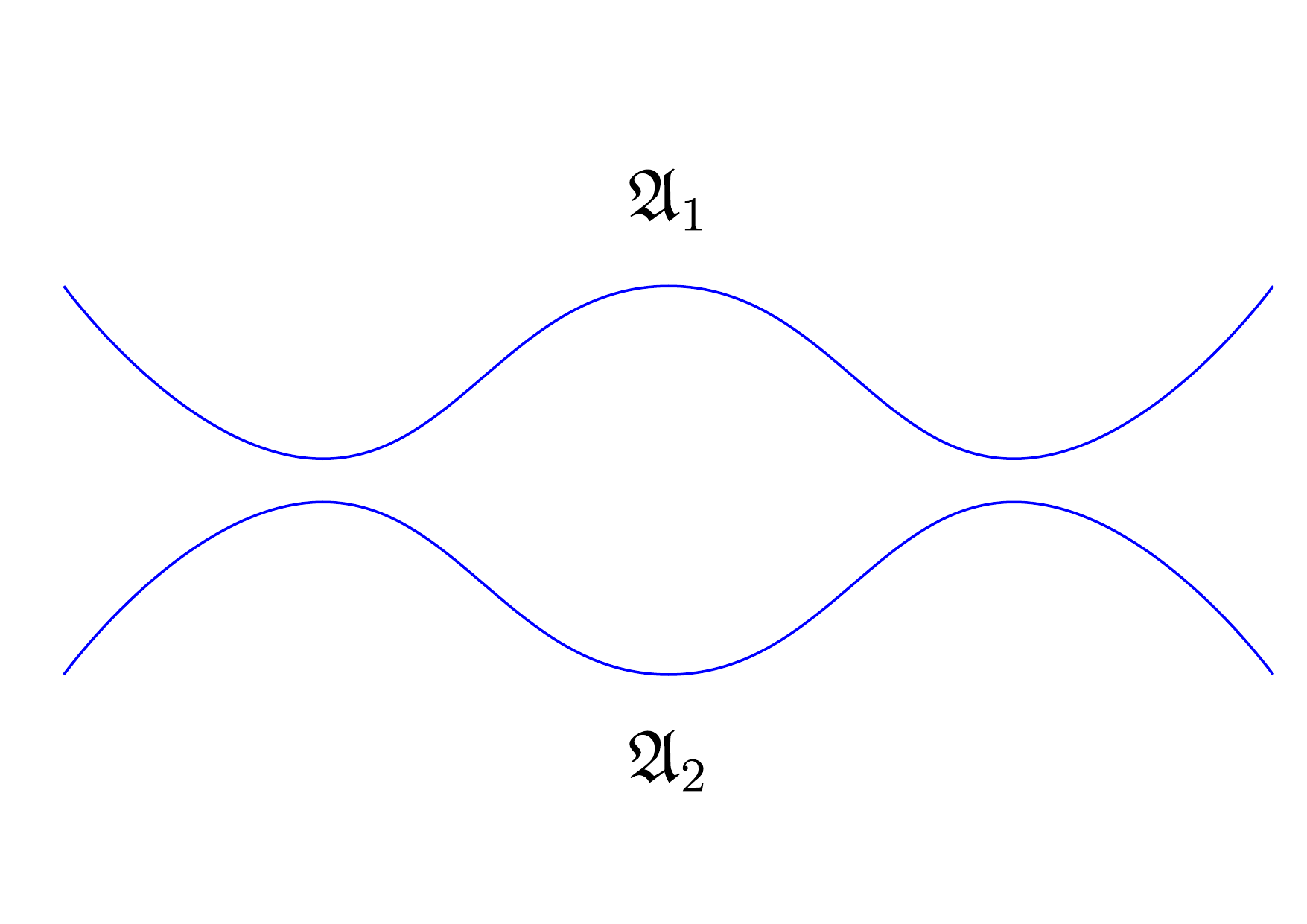}
  			\caption{One can  show that the probability of the event in the previous picture is bounded by the probability that the first two lines of the Airy line ensemble are close to each other at two points. }
  			\label{fig:8}
  		\end{subfigure}\\
  	\end{tabular}    
  	\caption{The main steps behind the proof of Theorem \ref{thm:5}\label{fig:ceo}}
  \end{figure}

  \section{The extended landscape}\label{sec:DL}
  The extended landscape was introduced in \cite{Dauvergne-Zhang-2021}. The authors show that disjoint non-intersecting paths in the Brownian LPP converge to the length of disjoint non-intersecting paths in the directed landscape. Moreover, and very importantly, they show a connection between the parabolic Airy line ensemble and the extended landscape in the flavour of  Greene's Theorem. 
 
  	Let $\Rufk$ be the space of all points $(\bm{x},s;\bm{y},t)$, where $s<t\in\R$ and $\bm{x},\bm{y}$ lie in the same space $\R^k_\leq=\{\bm{x}\in\R^k:x_1\leq \cdots\leq x_k\}$ for some $k\in\N$. For $\bm{u}=(\bm{x},s;\bm{y},t)\in\Rufk$, define
  	\begin{equation}\label{eq8}
  		\dl(\bm{u})=\sup_{\pi_1,\ldots\pi_k}\sum_{i=1}^k\dl(\pi_i)
  	\end{equation}
  	where $k$ is such that $\bm{x},\bm{y}\in \R^k_\leq$. The supremum is over all $k$-tuples of paths $\pi=(\pi_1,\ldots,\pi_k)$ where each $\pi_i$ is a path from $(x_i,s)$ to $(y_i,t)$, and the paths $\pi_i$ are disjoint. Such a collection $\pi$ is called a disjoint $k$-tuple from $(\bm{x},s)$ to $(\bm{y},t)$. This extension of the DL to  $\Rufk\rightarrow \R\cup\{\infty\}$ is called the the extended landscape. 
 
  \begin{theorem}[{\cite[Theorem 1.7]{Dauvergne-Zhang-2021}}]\label{thm:do}
  	Almost surely, the supremum in \eqref{eq8} is attained for  every $\bm{u}=(\bm{x},s;\bm{y},t)\in \Rufk$ by some disjoint k-tuple $\pi$, called a \textbf{disjoint optimizer} for $\bm{u}$ in $\dl$. Moreover, for any fixed $\bm{u}\in \Rufk$, almost surely there is a unique disjoint optimizer $\pi_{\bm{u}}$ for $\bm{u}$ in $\dl$. 
  \end{theorem}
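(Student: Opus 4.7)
The plan is to establish the two claims of Theorem~\ref{thm:do} separately: almost sure existence of a disjoint optimizer for every $\bm{u}\in\Rufk$, and almost sure uniqueness for each fixed $\bm{u}$.

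For existence at a fixed $\bm{u}=(\bm{x},s;\bm{y},t)$, I would take a maximizing sequence $\pi^{(n)}=(\pi_1^{(n)},\ldots,\pi_k^{(n)})$ of disjoint $k$-tuples and extract a uniformly convergent subsequence. Without loss of generality, each $\pi_i^{(n)}$ can be replaced by the unique $\dl$-geodesic between its own two endpoints at no cost to the total weight, so the $2/3$ modulus-of-continuity estimate for DL geodesics established in \cite{DOV22} delivers an equicontinuity bound on $[s,t]$. Combined with a spatial a priori bound --- coming from the quantitative tail decay of $\dl(p,q)$ as $|q-p|$ grows, which prevents near-optimal paths from wandering far from the straight-line interpolation between their endpoints --- Arzelà--Ascoli produces a uniform subsequential limit $\pi$. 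Uniform limits of non-crossing continuous paths are non-strictly non-crossing, which suffices since the supremum in \eqref{eq8} is unchanged if coordinates are allowed to touch. Upper semicontinuity of $\pi\mapsto\dl(\pi)$ under uniform convergence, itself a consequence of the continuity of $\dl$ on $\R^4_\uparrow$ and the variational definition \eqref{eq136}, then certifies that $\pi$ achieves the supremum. To obtain a single event on which an optimizer exists for every $\bm{u}\in\Rufk$ simultaneously, I would first carry out the above for $\bm{u}$ ranging over a countable dense subset of $\Rufk$, and then extend to all $\bm{u}$ using the continuity of $\bm{u}\mapsto\dl(\bm{u})$, which itself follows from the same compactness argument together with the continuity of $\dl$.

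For uniqueness at a fixed $\bm{u}$, I would argue by contradiction. Suppose two distinct disjoint optimizers $\pi$ and $\pi'$ exist for this fixed endpoint configuration. A Lindström--Gessel--Viennot style surgery applied to the $2k$ paths in $\pi\cup\pi'$ at a point where the two tuples disagree produces a new disjoint $k$-tuple of the same total weight; comparing this to $\pi$ componentwise exhibits, for some index $i$, two distinct weight-maximizing paths between the fixed endpoints $(x_i,s)$ and $(y_i,t)$, i.e., two distinct $\dl$-geodesics between a fixed pair of points. Since $\bm{u}$ is fixed, this event has probability zero by \eqref{eq122}.

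The main obstacle is the tightness step in the existence argument: without quantitative control on the spatial excursions of near-optimal $k$-tuples, the maximizing sequence could in principle escape to infinity, so one must leverage the regularity and tail estimates for $\dl$ from \cite{DOV22} in a uniform-in-$n$ fashion. A secondary delicate point is that limits of strictly disjoint $k$-tuples are only non-strictly disjoint (they may touch on intervals), so the entire argument --- existence, the swap in the uniqueness step, and the dense-set extension --- must consistently use the non-strict notion of disjointness and verify that the supremum in \eqref{eq8} is insensitive to this relaxation.
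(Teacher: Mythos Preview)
This theorem is not proved in the paper; it is quoted from \cite[Theorem 1.7]{Dauvergne-Zhang-2021} and used as a black box. There is therefore no proof in the present paper to compare your proposal against.

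That said, your sketch contains two genuine gaps.

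In the existence argument, you propose to replace each $\pi_i^{(n)}$ by the geodesic between its own endpoints ``at no cost to the total weight''. This can only increase each summand, but it will in general destroy disjointness: the individual geodesics between $(x_i,s)$ and $(y_i,t)$ may well intersect one another, which is precisely why a disjoint optimizer need not be a tuple of geodesics. You therefore cannot transplant the geodesic modulus-of-continuity estimate to the paths of a near-maximizing disjoint $k$-tuple in this way, and the equicontinuity step is left unjustified. A correct tightness argument must control the excursions and oscillations of near-optimal paths directly from the weight bounds on $\dl$, without assuming the paths are geodesics.

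In the uniqueness argument, the LGV-type surgery does not deliver what you claim. Swapping tails of two paths from the different tuples at a crossing changes the endpoint pairing, so it does not produce two distinct weight-maximizing paths between the \emph{same} fixed pair $(x_i,s),(y_i,t)$. More fundamentally, paths in a disjoint optimizer are not geodesics individually --- they are constrained by one another --- so exhibiting two distinct optimizing $k$-tuples does not yield two geodesics between a fixed point pair, and \eqref{eq122} is not directly applicable. The uniqueness in \cite{Dauvergne-Zhang-2021} is obtained by other means.
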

  The Airy line ensemble $\{\hat{\ele}_i: \R\rightarrow \R,i\in\N\}$ was introduced in \cite{Prahofer-Spohn-02} as the scaling limit of the PNG model started from the droplet initial condition. Since then, it has been shown to be the universal  scaling limit of many other models. The parabolic Airy line ensemble $\{\ele_i\}_{i\in\N}$ is defined through $\ele_i(x)=\hat{\ele}_i(x)-x^2$. The following result is a limiting version of the celebrated Greene's theorem and is fundamental to our proof. It connects the extended DL with the parabolic Airy line ensemble.  
  \begin{theorem}[{\cite[Corollary 1.9]{Dauvergne-Zhang-2021}}]\label{thm:DZ}
  	Let $\{\ele_i\}_{i\in\N}$ be the parabolic Airy line ensemble. Then there exists a coupling between $\ele$ and $\dl$ such that  
  	\begin{equation}\label{eq42}
  		\sum_{i=1}^k \ele_i(y)=\dl(0^k,0;y^k,1) \qquad \forall y\in \R,
  	\end{equation}
  	where $y^k=(y,\ldots,y)$ is the vector of length $k$ whose all elements equal $y$.
  \end{theorem}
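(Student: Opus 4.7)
The plan is to obtain the identity as the scaling limit of a finite-$n$ Greene-type theorem already available in Brownian last passage percolation. Given $n$ independent two-sided Brownian motions $B_1,\dots,B_n$, the classical RSK correspondence for Brownian motion (as developed by O'Connell--Yor, Biane--Bougerol--O'Connell, and Warren) provides a multi-path identity: the supremum of the total energy over $k$ pairwise disjoint up-right paths from $(0^k,1)$ to $(y^k,n)$ equals $\sum_{i=1}^k \lambda_i^{(n)}(y)$, where $\lambda_1^{(n)}\ge\cdots\ge\lambda_n^{(n)}$ are the eigenvalue trajectories of the associated Dyson Brownian motion (up to a deterministic normalization at $y=0$). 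This prelimiting identity is the object to scale-limit.

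First, I would invoke two known convergence statements under the KPZ parabolic scaling. On one hand, after centering at $2\sqrt{n}$ and rescaling spatially by $n^{-1/6}$ and longitudinally by $n^{1/3}$, the top $k$ lines of the Dyson Brownian motion converge in the topology of uniform convergence on compacts to the first $k$ lines $(\ele_1,\dots,\ele_k)$ of the parabolic Airy line ensemble (cf.\ \cite{Prahofer-Spohn-02}). On the other hand, by \cite{DOV22}, the single-path Brownian LPP time converges under the same scaling to $\dl$ on $\R^4_\uparrow$. To glue these convergences into the target identity \eqref{eq42}, one must promote the single-path limit to a multi-path limit: define the candidate extended landscape $\dl(\bm{u})$ on $\Rufk$ as in \eqref{eq8}, and show that the rescaled $k$-path Brownian LPP functional converges to it jointly with the underlying Airy line ensemble. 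Once such joint convergence holds, the finite-$n$ Greene identity passes to the limit and delivers the coupling asserted in the theorem.

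The main obstacle is the multi-path convergence itself, rather than the algebraic identity. It decomposes into three linked tasks: (i) tightness of the rescaled $k$-path last passage functional as a function of its endpoints, uniformly in compact sets of $\Rufk$; (ii) identification of subsequential limits with the supremum over disjoint continuous $k$-tuples in $\dl$ appearing in \eqref{eq8}; and (iii) existence of disjoint optimizers in the limit, i.e.\ Theorem \ref{thm:do}. The third piece is delicate because when the endpoints in $\bm{x}$ or $\bm{y}$ collide, the disjointness constraint can become degenerate, and one needs sharp bounds on how tightly disjoint optimizers can squeeze through narrow spatial windows. Here the Brownian Gibbs property of the parabolic Airy line ensemble is the natural input: it yields the quantitative separation estimates on $(\ele_1,\ldots,\ele_k)$ that rule out degenerate limit behavior and, via the candidate identity, transfer to an upper bound on the $k$-path landscape values controlling pathological configurations.

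Once (i)--(iii) are established, the argument concludes by applying the finite-$n$ Greene identity to the pair consisting of the rescaled $k$-path Brownian LPP and the rescaled top $k$ Dyson lines, and taking $n\to\infty$ along a subsequence realizing the joint distributional limit, yielding a coupling of $(\dl,\ele)$ in which \eqref{eq42} holds pointwise in $y$; continuity of both sides in $y$ then upgrades to the claimed identity for all $y\in\R$.
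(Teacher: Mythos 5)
This theorem is not proved in the paper; it is quoted verbatim from \cite[Corollary 1.9]{Dauvergne-Zhang-2021}, so there is no in-text argument to compare against. Your outline --- pass the finite-$n$ multi-path Greene identity for Brownian LPP to the KPZ scaling limit, gluing the convergence of the top Dyson lines to the parabolic Airy ensemble onto the single-path convergence of \cite{DOV22}, with the substantive work being multi-path tightness, identification of the limit with \eqref{eq8}, and existence of disjoint optimizers near collision --- is in broad strokes the strategy Dauvergne and Zhang actually carry out (their Theorem 1.7, quoted here as Theorem~\ref{thm:do}, being precisely the disjoint-optimizer input you flag as the hard step). One small citation slip: \cite{Prahofer-Spohn-02} concerns the PNG droplet; the convergence of rescaled Dyson Brownian motion to the parabolic Airy line ensemble is a separate input, but this does not affect the logic of your sketch.
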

   The extended landscape can be decomposed into a deterministic part plus a stationary stochastic process $\ash$ i.e.\
  \begin{equation}\label{eq108}
  	\ash(\bm{x},s;\bm{y},t)=\dl(\bm{x},s;\bm{y},t)+\sum_{i=1}^k\frac{(x_i-y_i)^2}{t-s}.
  \end{equation}
  Explicitly, the stationarity of $\ash$ means that for any $c\in\R$ and $\bm{c}=(c,\ldots,c)$ it holds that  $\ash((\bm{x},s;\bm{y},t))\sim \ash((\bm{x},s;\bm{y}+\bm{c},t))$. We shall need the following bounds involving $\ash$. 
  \begin{lemma}[{\cite[Lemma 6.7]{Dauvergne-Zhang-2021}}]\label{lm:ub}
  	For any $k\in\N$, there is a random constant $R>1$, such that for any $\bm{x},\bm{y}\in\R^k_\leq$ and $s<t$, we have
  	\begin{equation}\label{eq40}
  		|\ash(\bm{x},s;\bm{y},t)|<RG(\bm{x},\bm{y},s,t)(t-s)^{1/3}
  	\end{equation}
  	where
  	\begin{equation}\label{eq124}
  		G(\bm{x},\bm{y},s,t)=\Big(1+\frac{\|\bm{x}\|_1+|\bm{y}\|_1}{(t-s)^{2/3}}\Big)\Big(1+\frac{|s|}{t-s}\Big)(1+|\log(t-s)|).
  	\end{equation}
  	Also $\P(R>a)<ce^{-da}$ for any $a>0$. Here $c,d>0$ are constants depending on $k$.
  \end{lemma}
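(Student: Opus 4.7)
The plan is to combine a one-point exponential tail bound for $\ash$ with a Kolmogorov--Borel--Cantelli chaining over a dyadic discretization of $\Rufk$. The KPZ scaling of the directed landscape, together with the parabolic correction in the definition of $\ash$, yields
\[
\ash(\bm{x},s;\bm{y},t)\stackrel{d}{=}(t-s)^{1/3}\,\ash(\bm{0},0;(\bm{y}-\bm{x})/(t-s)^{2/3},1),
\]
so $\ash$ is invariant in law under time translation, common spatial shift, and KPZ rescaling. The three prefactors in $G$ are precisely the volume factors of a dyadic grid compatible with these symmetries: $(1+|\log(t-s)|)$ counts dyadic temporal scales $t-s\in[2^{-m},2^{1-m}]$, $(1+|s|/(t-s))$ counts time translates at each such scale, and $(1+(\|\bm{x}\|_1+\|\bm{y}\|_1)/(t-s)^{2/3})$ counts spatial dyadic cells after rescaling.

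First I would establish the one-point bound $\P(|\ash(\bm{x}_0,0;\bm{y}_0,1)|>a)<ce^{-da}$ for fixed $\bm{x}_0,\bm{y}_0\in\R^k_\leq$ lying in a bounded set. For $k=1$ this is the standard Tracy--Widom-type tail at a single point of $\dl$. For general $k$, the Greene-type identity $\sum_{i=1}^k\ele_i(y)=\dl(\bm{0},0;y^k,1)$ of Theorem~\ref{thm:DZ} transfers the diagonal bound to a sum of the top $k$ parabolic Airy lines, whose exponential upper and lower tails are well known from determinantal/rigidity estimates. The off-diagonal case $\bm{x}\neq\bm{y}$ is handled by stationarity of $\ash$ under common spatial shifts and by comparison of the $k$-disjoint optimizer value with concatenations of single-path optimizers.

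Next I would perform the chaining. Enumerate dyadic boxes by $(m,\ell,\bm{n},\bm{n}')\in\Z\times\Z\times\Z^k\times\Z^k$: the associated box $B=B_{m,\ell,\bm n,\bm n'}$ has $t-s\in[2^{-m},2^{1-m}]$, $s\in[\ell 2^{-m},(\ell+1)2^{-m}]$, $x_i/(t-s)^{2/3}\in[n_i,n_i+1]$, $y_i/(t-s)^{2/3}\in[n'_i,n'_i+1]$. The H\"older-type modulus of continuity of $\dl$ with exponents $(1/2)^-$ in space and $(1/3)^-$ in time, which itself holds with an exponentially integrable continuity constant (from \cite{DOV22}), bounds $\sup_B|\ash|$ by $|\ash|$ at the box center plus a remainder with exponential tail, uniformly in the box. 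Combining this with the one-point bound at the center and the scaling symmetry yields
\[
\P\bigl(\sup_{B}|\ash|>a\,G(t-s)^{1/3}\bigr)<c\exp\bigl(-da\,(1+|\log(t-s)|)(1+|s|/(t-s))(1+(\|\bm{x}\|_1+\|\bm{y}\|_1)/(t-s)^{2/3})\bigr),
\]
which is summable over $(m,\ell,\bm{n},\bm{n}')$ since the exponent dominates the logarithm of the number of boxes of each type. By Borel--Cantelli, $R:=\sup_{(\bm{x},s;\bm{y},t)\in\Rufk}|\ash(\bm{x},s;\bm{y},t)|/[G(\bm{x},\bm{y},s,t)(t-s)^{1/3}]$ is a.s.\ finite with $\P(R>a)<ce^{-da}$.

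The main obstacle is calibrating the dyadic mesh so that the modulus-of-continuity remainder on each box is dominated by the one-point value at its center while the volume growth of the grid is matched exactly by the three factors in $G$; this balance is what pins down the precise form of $G$. A secondary technical point is obtaining exponential tails on sums of the top $k$ parabolic Airy lines with constants uniform in the relevant parameters; this is handled at the cost of $k$-dependent constants $c,d$, as permitted by the statement.
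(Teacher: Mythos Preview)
The paper does not prove this lemma; it is quoted verbatim as \cite[Lemma~6.7]{Dauvergne-Zhang-2021} and used as a black box, so there is no proof in the present paper to compare against. Your outline is in the right spirit---the original proof in \cite{Dauvergne-Zhang-2021} is indeed a one-point tail plus a union bound over a suitable grid---but a few of the details you wrote are not quite right as stated.

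First, the distributional identity
\[
\ash(\bm{x},s;\bm{y},t)\stackrel{d}{=}(t-s)^{1/3}\,\ash(\bm{0},0;(\bm{y}-\bm{x})/(t-s)^{2/3},1)
\]
is false for general $\bm{x}\in\R^k_\leq$: the stationarity in Lemma~\ref{lm:prp} only allows a \emph{common} spatial shift $T_c$, so you cannot send an arbitrary $\bm{x}$ to $\bm{0}$ unless its coordinates are all equal. You can center so that, say, $\sum x_i=0$, but the reduction still carries a nontrivial $\bm{x}$-dependence; this is exactly why $G$ contains $\|\bm{x}\|_1+\|\bm{y}\|_1$ rather than $\|\bm{y}-\bm{x}\|_1$. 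Second, the modulus-of-continuity input you invoke from \cite{DOV22} is for the $k=1$ landscape; for the extended landscape one needs the corresponding continuity estimates proved in \cite{Dauvergne-Zhang-2021}, and it is cleaner there to work directly with the one-point tail on a mesh rather than to separate out a H\"older remainder. Finally, the off-diagonal one-point tail for general $k$ does not follow just from Theorem~\ref{thm:DZ} (which is only the diagonal $\bm{x}=\bm{y}=y^k$); the actual argument uses the metric composition law and the $k=1$ bound to control $\dl(\bm{x},s;\bm{y},t)$ from above and below.
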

  \begin{lemma}[{\cite[Proposition 10.5]{DOV22}}]\label{lm:ub4}
  	Fix $b>0$. For any $|x'|,|x|,|y'|,|y|\leq b$ the following holds
  	\begin{equation}\label{eq41}
  		\big| \ash(x',0;y',1)-\ash(x,0;y,1)\big|\leq C\xi^{1/2}\log^{1/2}(4b\xi^{-1}),
  	\end{equation}
  	where $\xi=|x-x'|\vee|y-y'|$ and  $C$ is a random constant satisfying $\P(C>m)\leq cb^{10}e^{-dm^{3/2}}$ for some universal  constants $c,d>0$.
  \end{lemma}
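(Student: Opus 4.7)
\medskip

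\noindent\textbf{Proof proposal.} The statement is a uniform modulus-of-continuity bound for the Airy sheet $\ash(\cdot,0;\cdot,1)$ restricted to a box of side $2b$, with the sharp $\xi^{1/2}\log^{1/2}(\cdot)$ rate and sub-Gaussian-type (in fact, $3/2$-exponential) tail on the random constant. I would approach it in two stages: (i) a two-point increment estimate at a fixed pair $(\bm{u},\bm{u}')$ with $\|\bm{u}-\bm{u}'\|_\infty\leq \xi$, with tail of order $e^{-dm^{3/2}}$ at scale $m\xi^{1/2}$; then (ii) a multi-scale chaining/Kolmogorov argument to promote (i) to a uniform-in-$\bm{u},\bm{u}'$ statement on $[-b,b]^2\times[-b,b]^2$.

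The input to stage (i) is the one-point upper tail $\mathbb{P}(|\ash(x,0;y,1)|>m)\leq c\,e^{-dm^{3/2}}$ that one already has from Lemma~\ref{lm:ub} specialized to $k=1$, combined with the time/shear symmetries of $\dl$. The idea is to write
\[
\ash(x',0;y',1)-\ash(x,0;y,1)
\]
as a difference that, after applying the appropriate skew-stationarity of $\dl$, can be bounded in terms of the value of a rescaled Airy sheet over an interval of length comparable to $\xi$. Rescaling by $\xi$ via the KPZ-$1:2:3$ scaling of $\dl$ converts this into a one-point comparison at unit scale, and then the one-point $3/2$-exponential tail transfers to an $e^{-dm^{3/2}}$ bound at the scale $m\xi^{1/2}$. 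The main obstacle in stage (i) is keeping the correct exponent $3/2$ in the tail (rather than merely Gaussian) and keeping explicit control of how constants depend on $b$ through the sup-norm $\|\bm x\|_1,\|\bm y\|_1\le 2b$ appearing in the $G$-factor of \eqref{eq124}.

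For stage (ii) I would discretize $[-b,b]^2$ at dyadic scales $\xi_n = b\,2^{-n}$, obtaining grids $G_n$ of cardinality $O(b^2 4^n/ b^2)=O(4^n)$, take a union bound of (i) over all consecutive-grid pairs at level $n$ at threshold $m_n\xi_n^{1/2}$, and sum the resulting tails choosing $m_n$ so as to make both the chaining telescoping series and the probability series converge. The standard computation gives that the random supremum
\[
C \;:=\; \sup_{\bm{u}\neq \bm{u}'\in[-b,b]^2\times[-b,b]^2}\frac{|\ash(\bm{u})-\ash(\bm{u}')|}{\xi^{1/2}\log^{1/2}(4b\xi^{-1})}
\]
satisfies the claimed tail $\mathbb{P}(C>m)\leq c\,b^{10} e^{-d m^{3/2}}$, where the polynomial prefactor $b^{10}$ absorbs the number of dyadic cells, the $b$-dependence inherited from the one-point constants through $G$, and the overhead from reducing to unit scale via rescaling.

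The hard part is genuinely stage (i): getting the right $3/2$-tail on the increment rather than on the two individual terms separately, because a naive union bound on the two endpoints loses the cancellation and only yields a $\xi$-independent bound. Exploiting local Brownian-type fluctuations of the Airy sheet (equivalently, that the top curve of the parabolic Airy line ensemble $\ele_1$ is locally absolutely continuous with respect to Brownian motion, as invoked throughout the paper) is what ultimately delivers the $\xi^{1/2}$ scaling in the increment and preserves the $3/2$-exponential tail after rescaling.
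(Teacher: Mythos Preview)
The paper does not prove this lemma: it is quoted directly from \cite[Proposition 10.5]{DOV22} and stated without proof, so there is no ``paper's own proof'' to compare your proposal against.

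For what it is worth, your two-stage outline (a two-point increment tail at scale $m\xi^{1/2}$ with $e^{-dm^{3/2}}$ decay, followed by dyadic chaining) matches the strategy used in the original reference. One correction to your stage~(i): the increment bound in \cite{DOV22} does not come from skew-stationarity alone but from the metric composition law for $\dl$. Writing $\dl(x',0;y,1)\geq \dl(x',0;x,r)+\dl(x,r;y,1)$ for small $r$ and using the analogous reverse inequality, the increment $\dl(x',0;y,1)-\dl(x,0;y,1)$ is sandwiched by short-time landscape values on an interval of temporal length $r\sim\xi^{3/2}$; KPZ rescaling then produces the $\xi^{1/2}$ factor and inherits the $3/2$-exponential tail from the one-point bound. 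Your remark that a naive union bound on the two endpoints would fail is exactly right, and this composition argument is what supplies the cancellation.
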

  The extended landscape satisfies the following useful symmetry and scaling properties.
  \begin{lemma}[{\cite[Lemma 6.10]{Dauvergne-Zhang-2021}}]\label{lm:prp}
  	Take $q>0,r,c\in\R$, and let $T_c\bm{x}$ denote the shifted vector $(x_1+c,\ldots,x_k+c)$. We have the following equalities in distribution for $\dl$ as functions in $\Rufk$
  	\begin{enumerate}
  		\item Stationarity: $\dl(\bm{x},s;\bm{y},t)\stackrel{d}{=}\dl(T_c\bm{x},s;T_c\bm{y},t+r)$.
  		\item Flip symmetry: $\dl(\bm{x},s;\bm{y},t)\stackrel{d}{=}\dl(-\bm{x},-s;-\bm{y},-t)$.
  		\item Rescaling: $\dl(\bm{x},s;\bm{y},t)\stackrel{d}{=}q\dl(q^{-2}\bm{x},q^{-3}s;q^{-2}\bm{y},q^{-3}t)$.
  		\item Skew symmetry:
  		\begin{equation}
  			\dl(\bm{x},s;\bm{y},t)+(t-s)^{-1}\Vert\bm{x}-\bm{y}\Vert_2^2\stackrel{d}{=} 	\dl(\bm{x},s;T_c\bm{y},t)+(t-s)^{-1}\Vert\bm{x}-T_c\bm{y}\Vert_2^2.
  		\end{equation}
  	\end{enumerate}
  \end{lemma}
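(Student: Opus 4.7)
The plan is to lift each of the four identities from its $k=1$ counterpart for $\dl$ on $\R^4_\uparrow$ (see \cite[Lemma 10.2]{DOV22}) to the extended landscape using the variational representation~\eqref{eq8}. For each item I would exhibit an explicit path transformation $\Phi$ that induces a bijection between disjoint $k$-tuples from the source endpoints onto disjoint $k$-tuples from the target endpoints, and then verify that the one-path weight $\dl(\pi)$ transforms under $\Phi$ exactly as the corresponding one-path identity demands. Summing over $i=1,\dots,k$ and taking the supremum on both sides of~\eqref{eq8} converts the one-path distributional equality of random functions into the stated extended-landscape identity, once we know that almost surely the suprema are attained on both sides, which is guaranteed by Theorem~\ref{thm:do}.

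For the first three items the natural choices are, respectively, the space-time translation $\Phi(\pi)(u)=\pi(u-r)+c$ for stationarity, the reflect-and-reverse map $\Phi(\pi)(u)=-\pi(-u)$ for flip symmetry, and the KPZ rescaling $\Phi(\pi)(u)=q^{-2}\pi(q^3 u)$, sending paths on $[s,t]$ to paths on $[q^{-3}s,q^{-3}t]$, for the rescaling. In each case $\Phi$ sends disjoint $k$-tuples to disjoint $k$-tuples because it is applied coordinate-wise by the same rigid order-preserving transformation, and the one-path identity tells us that $\dl(\pi)$ and $\dl(\Phi(\pi))$ agree in distribution as random functions of the endpoints, up to the appropriate factor of $q$ in the rescaling case. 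Summing over the $k$ paths and taking suprema yields the extended version.

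The genuinely informative case is the skew symmetry. Here the relevant transformation is the linear shear $\Phi(\pi)(u)=\pi(u)+c(u-s)/(t-s)$, which bijects disjoint $k$-tuples from $(\bm{x},s)$ to $(\bm{y},t)$ onto disjoint $k$-tuples from $(\bm{x},s)$ to $(T_c\bm{y},t)$, since adding a common affine function to every $\pi_i$ preserves their pointwise order, hence disjointness. The one-path skew identity from \cite[Lemma 10.2]{DOV22} gives, jointly as random functions,
\begin{equation*}
\dl(x_i,s;y_i,t)+\frac{(x_i-y_i)^2}{t-s}\stackrel{d}{=}\dl(x_i,s;y_i+c,t)+\frac{(x_i-y_i-c)^2}{t-s};
\end{equation*}
summing in $i$ and using the coordinate-wise decomposition $\|\bm{x}-\bm{y}\|_2^2=\sum_i(x_i-y_i)^2$, then passing to the supremum, finishes the argument. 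The main obstacle I foresee is not really conceptual but bookkeeping: one must invoke the one-path identities as \emph{joint} distributional equalities of random functions on $\R^4_\uparrow$ (not just finite-dimensional marginals) and verify measurability of $\Phi$ on the space of $k$-tuples of paths, both of which are routine given the continuity of $\dl$ and the existence of disjoint optimizers in Theorem~\ref{thm:do}.
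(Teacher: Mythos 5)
The paper does not prove this lemma; it is cited directly from \cite[Lemma 6.10]{Dauvergne-Zhang-2021}, so there is no in-paper argument to compare against. Your strategy — pushing each of the four one-path symmetries of $\dl$ on $\R^4_\uparrow$ through the variational formula~\eqref{eq8} via explicit order-preserving path transformations $\Phi$ — is the natural proof and is essentially how the result is established in the source. Two small points of care. First, the stationarity and flip identities as transcribed in the lemma have minor typographical slips in the time arguments (e.g.\ $s$ should read $s+r$ in item 1, and the flip should reverse the time order and the ordering of the coordinate vectors); your transformations implicitly use the corrected versions, which is what is intended. Second, for the skew symmetry you do need to observe that the parabolic corrections telescope along an arbitrary partition $s=t_0<\cdots<t_m=t$: applying the one-path skew identity on each subinterval under the shear $\Phi(\pi)(u)=\pi(u)+c(u-s)/(t-s)$ yields a per-segment correction $\frac{2c(\pi(t_{j-1})-\pi(t_j))}{t-s}-\frac{c^2(t_j-t_{j-1})}{(t-s)^2}$, whose sum over $j$ depends only on the endpoints and equals $\frac{(x-y)^2-(x-y-c)^2}{t-s}$; this is exactly what makes $\dl(\pi)+\frac{(\pi(s)-\pi(t))^2}{t-s}$ shear-stationary at the level of path weights, and then summing over the $k$ disjoint paths and taking the supremum gives the stated extended-landscape identity. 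You gesture at this but it is worth making explicit, since it is the only place where the argument is not a mere substitution.
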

 \subsection{Properties of geodesics in the directed landscape}
 In this section we collect results on geodesics in the DL that we shall use in this work. We begin with a bound on the fluctuations of finite geodesics.
  \begin{lemma}[{\cite[Lemma 3.11]{Ganguly-Zhang-2022}}]\label{lm:12}
 	There is a random number $R$ such that the following is true. First, for any $M>0$ we have $\P(R>M)\leq Ce^{-cM^{9/4}\log(M)^{-4}}$ for some constants $c,C>0$. Second, for any $u=(x,s;y,t)\in \R^4_{\uparrow}$, any geodesic $\pi_u$, and $(s+t)/2\leq r<t$, we have 
 	\begin{equation}
 		\Big|\pi_u(r)-\frac{x(t-r)+y(r-s)}{t-s}\Big|<R(t-r)^{2/3}\log^3\big(1+\lVert u\rVert/(t-r)\big)
 	\end{equation}
 	Similar bounds holds when $s<r<(s+t)/2$ by symmetry.
 \end{lemma}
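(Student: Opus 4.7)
The plan is to use the geodesic optimization at the intermediate time $r$ to convert a large transversal deviation into a forced $\ash$-increment, and then quantify that increment using the stationary fluctuation bounds in Lemmas \ref{lm:ub} and \ref{lm:ub4}. Writing $z_0:=\tfrac{x(t-r)+y(r-s)}{t-s}$ for the linear interpolant, the composition law for the geodesic $\pi_u$ through $(\pi_u(r),r)$ gives
\begin{equation}
\dl(x,s;\pi_u(r),r)+\dl(\pi_u(r),r;y,t)\ \geq\ \dl(x,s;z_0,r)+\dl(z_0,r;y,t).
\end{equation}
Substituting the $k=1$ case of \eqref{eq108}, namely $\dl(a,s';b,t')=\ash(a,s';b,t')-(a-b)^2/(t'-s')$, and using that $z_0$ is the unique minimizer of the combined parabolic cost, a direct calculation gives that the quadratic gap equals $(\pi_u(r)-z_0)^2\,\tfrac{t-s}{(r-s)(t-r)}$. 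Hence on the event $|\pi_u(r)-z_0|\geq M(t-r)^{2/3}$ the sum of $\ash$-increments at $\pi_u(r)$ versus $z_0$ must be at least
\begin{equation}
M^2(t-r)^{1/3}\cdot\frac{t-s}{r-s}\ \geq\ M^2(t-r)^{1/3},
\end{equation}
where the last inequality uses the hypothesis $r\geq(s+t)/2$.

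Next I rescale via the scaling symmetry in Lemma \ref{lm:prp} with $q=(t-r)^{1/3}$ to normalize $t-r$ to $1$ and divide the space variable by $(t-r)^{2/3}$. The required $\ash$-increment becomes at least $M^2$, the transversal displacement becomes at least $M$, and the relevant endpoints lie in a ball of radius $b$ of order $1+\|u\|/(t-r)^{2/3}$. Lemma \ref{lm:ub4} bounds such an increment by $C\,M^{1/2}\log^{1/2}(4b/M)$ with $\P(C>m)\leq c\,b^{10}\exp(-d\,m^{3/2})$; for the increment to exceed $M^2$ one needs $C\geq M^{3/2}/\log^{1/2}(4b/M)$, yielding probability at most
\begin{equation}
c\,b^{10}\exp\!\bigl(-d\,M^{9/4}/\log^{3/4}(4b/M)\bigr).
\end{equation}
This single-point estimate is the source of the $9/4$ exponent appearing in the tail bound of the lemma.

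The remaining step, and the main obstacle, is to upgrade this pointwise estimate to the uniform-in-$u$ statement of the lemma while absorbing the polynomial prefactor $b^{10}$ and the dependence on $\|u\|$. The standard route is a dyadic covering: partition $\|u\|/(t-r)$ into dyadic scales, apply the pointwise estimate on an $\epsilon$-net of endpoints, and interpolate via the modulus of continuity of $\ash$ (again from Lemma \ref{lm:ub4}); the absolute magnitude of $\ash$ for extreme $\|u\|$ is controlled by Lemma \ref{lm:ub}, whose logarithmic factor in \eqref{eq124} is the source of the $\log^3(1+\|u\|/(t-r))$ correction in the displacement bound. The enumeration cost of the covering is absorbed into the logarithmic correction in the exponent, producing the $\log^{-4}M$ factor in the tail of $R$. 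No new probabilistic input beyond the two $\ash$-control lemmas is required; the difficulty lies entirely in the careful uniform bookkeeping as $r\to(s+t)/2$ and $\|u\|\to\infty$ simultaneously.
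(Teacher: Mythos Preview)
The paper does not prove this lemma: it is quoted directly from \cite[Lemma 3.11]{Ganguly-Zhang-2022} and used as a black box, so there is no in-paper proof to compare against.

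Evaluated on its own, your outline captures the standard mechanism (a large transversal deviation forces an anomalously large $\ash$-increment via the parabolic gap), and the computation yielding the $9/4$ exponent is correct in spirit. But the way you invoke Lemma~\ref{lm:ub4} is not clean. That lemma is a modulus-of-continuity bound and is only informative for $\xi\ll b$; in your application $\xi$ equals the displacement $M$ itself, so once $M\geq 4b$ the factor $\log^{1/2}(4b/M)$ is undefined. In that regime you must instead use the absolute bound of Lemma~\ref{lm:ub} on each $\ash$-term separately, and the two regimes have to be patched. A second issue is that after rescaling $t-r$ to $1$, the increment $\ash(x,s;\cdot,r)$ lives on a time interval of length $(r-s)/(t-r)\geq 1$, not on $[0,1]$, so Lemma~\ref{lm:ub4} does not apply to it as stated; a further rescaling alters the admissible $b$ and the prefactors. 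These are not fatal objections, but the ``careful uniform bookkeeping'' you defer to the last paragraph is precisely where the $\log^{3}$ spatial correction and the $\log^{-4}M$ in the tail are manufactured, and your sketch does not actually show how the $b^{10}$ prefactor is absorbed uniformly over the dyadic scales. As written this is a plausible plan rather than a proof.
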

 Next is the analogous result for infinite geodesics.
 \begin{lemma}[ {\cite[Theorem 3.2]{Rahman-Virag-21}}]\label{lm:9}
 	Given a starting point $p=(\chi,s)\in\R^2$ and direction $d\in\R$, there is almost surely an infinite geodesic $g$ of $\mathcal{L}$ from $p$ with direction $d$. More precisely, there is $a>0$ and a random constant $C$ with $\mathbb{E}(a^{C^3})=c<\infty$ so that 
 	\begin{equation}\label{eq111}
 		|g(s+t)-\chi-dt|\leq C t^{2/3}\Big[1\vee\big(\log\log t\big)^{1/3}\Big] \qquad \text{ for $t\geq 1$}.
 	\end{equation}
 	Moreover, the constant $c$ is independent of the direction $d$.
 \end{lemma}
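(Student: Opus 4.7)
My plan is to construct $g$ as a subsequential limit of the finite geodesics $\pi_n$ from $p$ to endpoints $q_n = (\chi + d\cdot 2^n,\, s + 2^n)$, and to transfer the finite-geodesic fluctuation bound of Lemma \ref{lm:12} to the limit. A key lever is that the random constant $R$ in Lemma \ref{lm:12} is universal: a single $R$ controls the fluctuations of every geodesic in the landscape simultaneously, so one does not need independent bounds at each scale.

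For each $n$, let $\pi_n$ be the a.s.\ unique geodesic from $p$ to $q_n$. The linear interpolation between $p$ and $q_n$ at time $s+t$ is exactly $\chi + dt$, so the symmetric form of Lemma \ref{lm:12} applied at $r = s + t$ with $t \le 2^{n-1}$ gives
\[
|\pi_n(s+t) - \chi - dt| \le R\, t^{2/3}\log^3\bigl(1 + \|(p,q_n)\|/t\bigr).
\]
Taking $n$ minimal with $2^{n-1} \ge t$, one has $\|(p,q_n)\|/t = O(1+|d|)$, so the $\log^3$ factor is a deterministic constant depending on $d$. Thus each $\pi_n$, restricted to $[s, s+2^{n-1}]$, obeys $|\pi_n(s+t) - \chi - dt| \le \kappa(d)\, R\, t^{2/3}$ on $[1, 2^{n-1}]$, with the \emph{same} $R$ across all $n$. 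Combining the tail $\P(R > M) \le C e^{-cM^{9/4}\log(M)^{-4}}$ with a dyadic reabsorption of the $\log^3$ and $\log^4$ corrections produces the form $C t^{2/3}[1 \vee (\log \log t)^{1/3}]$ with $\E[a^{C^3}] < \infty$ stated in the lemma.

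To pass from finite $\pi_n$ to the infinite $g$, the uniform bound makes $\{\pi_n\}$ equicontinuous on every $[s, s+T]$ (a H\"older-$2/3$ modulus follows from applying the same estimate between intermediate scales). By Arzel\`a--Ascoli and a diagonal extraction, a subsequence $\pi_{n_k}$ converges uniformly on compacts to a continuous $g:[s,\infty) \to \R$. Continuity of $\dl$, together with the fact that each $\pi_{n_k}|_{[r_1,r_2]}$ attains the landscape distance between its endpoints, transfers the geodesic property to $g$ on every $[r_1,r_2]$, so $g$ is a bona fide infinite geodesic; the inherited fluctuation bound then forces $g(s+t)/t \to d$, so $g$ is $d$-directed. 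Direction-independence of the constants in the moment bound follows from the stationarity and skew symmetry of Lemma \ref{lm:prp}: the skew-stationarity identity lets one shear space by $d$ to reduce the $d$-directed problem to the $0$-directed one while preserving the transversal fluctuation statistic in distribution.

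The main obstacle I anticipate is the careful bookkeeping of the $\log^3$ factor inside Lemma \ref{lm:12} together with the $\log^4$ factor in the tail of $R$. One must calibrate the dyadic decomposition and the Borel--Cantelli threshold so that these logarithms collapse to exactly $(\log\log t)^{1/3}$ in the final bound (rather than $(\log\log t)^{4/9}$, which a naive choice produces) and so that the tail of $C$ is of the stretched-exponential order $\exp(-\mathrm{const}\cdot C^{3})$ that is needed to secure $\E[a^{C^3}] < \infty$.
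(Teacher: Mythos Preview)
Your approach has a genuine gap that is not a matter of bookkeeping. The moment condition $\E[a^{C^3}]<\infty$ forces the tail of $C$ to decay at least like $e^{-c'M^{3}}$. The universal constant $R$ from Lemma~\ref{lm:12}, however, only satisfies $\P(R>M)\le Ce^{-cM^{9/4}(\log M)^{-4}}$, and no amount of reabsorbing the $\log^3$ or $\log^4$ corrections into a $(\log\log t)^{1/3}$ factor can promote the exponent $9/4$ to $3$. Concretely, if your argument produced $|g(s+t)-\chi-dt|\le \kappa(d)\,R\,t^{2/3}$, you would be forced to take $C=\kappa(d)R$, and then $\E[a^{C^3}]=\infty$. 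You flag exactly this issue in your last paragraph, but it is a hard obstruction, not a calibration problem: the $9/4$ is the price paid in \cite{Ganguly-Zhang-2022} for a bound that is simultaneous over \emph{all} endpoints, and that price cannot be refunded once you commit to the universal $R$. A second, related problem is the transfer step: for a fixed time $t$ your bound is on $\pi_{n(t)}(s+t)$ with $n(t)$ chosen minimal for that $t$, whereas the limit $g$ is built from $\pi_{n_k}$ with $n_k\to\infty$; along that sequence the factor $\log^3(1+\|(p,q_{n_k})\|/t)\sim n_k^3$ diverges, so the bound does not pass to the limit as written.

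The paper does not attempt to derive the lemma from Lemma~\ref{lm:12}. It simply invokes \cite[Theorem 3.2]{Rahman-Virag-21} and points to the inequality \cite[Eq.~3.6]{Rahman-Virag-21}, namely $\P\big(t^{-2/3}\sup_{s\in[0,t]}|g_n(s)-ds|>\lambda\big)<ce^{-a\lambda^3}$, which already carries the cubic exponent; a Borel--Cantelli over dyadic scales with thresholds $\lambda_k\sim(\log k)^{1/3}$ then yields \eqref{eq111} with the stated moment bound. Your use of skew/shear invariance (Lemma~\ref{lm:prp}) to obtain direction-independence of the constants is correct and is exactly the argument the paper gives for that point. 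If you want a self-contained route, replace the universal $R$ by the single-geodesic cubic tail (for each fixed $q_n$) and run Borel--Cantelli; that is effectively the Rahman--Vir\'ag proof.
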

 \begin{proof}
 	The statement in \cite[Theorem 3.2]{Rahman-Virag-21} is slightly different, however, an inspection of the proof therein \cite[Eq. 3.6]{Rahman-Virag-21} shows that one can rewrite the statement as in \eqref{eq111}. Lastly, to see that the constant $c$ is independent of the direction $d$, we note that from the shear invariance of the DL, a bound analogous to   \cite[Eq. 3.6]{Rahman-Virag-21} holds for general direction $d$ i.e.\
 	\begin{equation}
 		\P\Big(\frac{\sup_{s\in[0,t]}|g_n(s)-ds|}{t^{2/3}}>\lambda\Big)<ce^{-a\lambda^3},
 	\end{equation}
 	where $c$ and $a$ are independent of $d$. The rest of the proof continues as in \cite{Rahman-Virag-21}
 \end{proof}
 A priori, two infinite geodesics emanating from the same point may form a `bubble' by diverging at some point in time and merging back again later. The next result states that bubbles can only appear in geodesics close to their endpoints.
 \begin{lemma}[{\cite[Theorem 1]{bhatia2023duality}}]\label{lm:MB}
 	There exists an event $\Omega_{\rm{NoBubble}}$ of full probability, on which  there exist no point $u=(x,s;y,t)\in\R_{\uparrow}^4$ such that there are two distinct geodesics $\eta^1,\eta^2$ from $(x,s)$ to $(y,t)$ with the property that for some small enough $\delta>0$, $\eta^1(r)=\eta^2(r)$ for all $r\in(s,s+\delta)\cup(t-\delta,t)$. As a consequence, almost surely, for any geodesic $\gamma:[a,b]\rightarrow \R$ and any $(a_1,b_1)\subseteq [a,b]$, $\gamma|_{(a_1,b_1)}$ is the unique geodesic between its endpoints.
 \end{lemma}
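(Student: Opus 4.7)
The ``as a consequence'' clause follows easily from the main statement by concatenation: if $\gamma:[a,b]\to\R$ is a geodesic and $\gamma'$ is a geodesic distinct from $\gamma|_{[a_1,b_1]}$ between the same endpoints, then the spliced path $\tilde\gamma:=\gamma|_{[a,a_1]}\cup \gamma'\cup \gamma|_{[b_1,b]}$ has the same total weight as $\gamma$ (since $\gamma'$ and $\gamma|_{[a_1,b_1]}$ have equal weight), and therefore is a second geodesic from $(\gamma(a),a)$ to $(\gamma(b),b)$ that agrees with $\gamma$ on the initial piece $[a,a_1]$ and on the terminal piece $[b_1,b]$. Choosing $\delta=\min(a_1-a,b-b_1)$ contradicts the main no-bubble statement.

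For the main statement, I would argue by contradiction. Work on the full-probability event on which \eqref{eq122} holds simultaneously for all $(p,q)\in(\mathbb{Q}^2)^2\cap\R^4_\uparrow$. Suppose for contradiction there exist $\eta^1\neq\eta^2$ from $(x,s)$ to $(y,t)$ that agree on $(s,s+\delta)\cup(t-\delta,t)$. Set
\[
r_1:=\inf\{r\in(s,t):\eta^1(r)\neq \eta^2(r)\},\qquad r_2:=\sup\{r\in(s,t):\eta^1(r)\neq \eta^2(r)\}.
\]
By continuity of $\eta^1,\eta^2$ and the agreement on initial/terminal intervals we have $s<r_1<r_2<t$ and $\eta^1(r_i)=\eta^2(r_i)=:x_i$, so the restrictions $\eta^1|_{[r_1,r_2]}$ and $\eta^2|_{[r_1,r_2]}$ are two distinct geodesics sharing endpoints $(x_1,r_1)$, $(x_2,r_2)$ and disjoint on $(r_1,r_2)$, which we may assume satisfy $\eta^1<\eta^2$ on the open interval by planar ordering.

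The principal task is to ``rationalise'' this bubble. Pick rationals $q_1,q_2$ with $s<q_1<r_1<r_2<q_2<t$ where the two geodesics agree (take $q_1\in(s,s+\delta)$, $q_2\in(t-\delta,t)$, which is possible for small enough $\delta$). Then on $[q_1,q_2]$ we still have two distinct geodesics between $(x',q_1)$ and $(y',q_2)$ where $x':=\eta^1(q_1)=\eta^2(q_1)$ and $y':=\eta^1(q_2)=\eta^2(q_2)$; moreover, the two geodesics open a macroscopic gap $\eta^2(r)-\eta^1(r)>0$ on a full subinterval $(r_1,r_2)$. I would then approximate: pick rational spatial points $x'_n\downarrow x'$ and $y'_n\downarrow y'$; the unique (a.s.) geodesic $\gamma_n$ from $(x'_n,q_1)$ to $(y'_n,q_2)$ is, by monotonicity of geodesics under endpoint translation in the planar order, sandwiched above or below both of $\eta^1|_{[q_1,q_2]},\eta^2|_{[q_1,q_2]}$. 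Taking $x'_n,y'_n\downarrow x',y'$ and using the stability of the supremum in \eqref{eq112} together with continuity of $\dl$, the $\gamma_n$ converge to a geodesic between $(x',q_1)$ and $(y',q_2)$; but this limit would have to coincide simultaneously with the ``upper envelope'' and ``lower envelope'' geodesic, which is impossible because $\eta^1$ and $\eta^2$ differ strictly on $(r_1,r_2)$. This would furnish the contradiction.

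\textbf{Where the difficulty lies.} The main obstacle is that $(x_i,r_i)$ are random, so the uniqueness input \eqref{eq122} cannot be applied directly at these split/merge points. The sandwiching step above hinges on a monotonicity / continuity statement for geodesics under rational perturbation of endpoints, which is plausible but requires care: one must rule out the possibility that the approximating $\gamma_n$ jump discontinuously between the ``upper'' and ``lower'' branches of the bubble as $n\to\infty$. A cleaner alternative, which I expect is the route taken in \cite{bhatia2023duality}, would be to exploit a time-reversal duality for the directed landscape: since geodesic behaviour near a \emph{starting} point is well understood (unique forward geodesic from each rational point; split points are exceptional with explicit Hausdorff dimension), mapping the terminal-end behaviour onto the initial-end behaviour via the dual landscape converts the bubble hypothesis into a ``two geodesics from a common point re-merging'' statement, which can be ruled out using the tree structure of forward geodesics from rational sources. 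I would expect the formal proof to combine a rational-approximation skeleton with this duality input to close the gap.
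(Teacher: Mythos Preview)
The paper does not prove this statement: it is quoted verbatim as \cite[Theorem 1]{bhatia2023duality} and used as a black box. There is therefore no in-paper argument to compare against. Your derivation of the ``as a consequence'' clause by splicing is correct and is almost certainly what the author intends (note that it requires $a<a_1<b_1<b$ strictly so that the buffer $\delta>0$ exists; the paper's only use of $\Omega_{\rm{NoBubble}}$, in the proof of Lemma~\ref{lem:ub}, is exactly of this form, with restrictions of infinite geodesics to a compact interior interval $[M,U]$).

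Your attempt at the main no-bubble statement, however, does not close. The sandwiching step fails precisely at the point you flag: approximating the endpoints from above produces geodesics $\gamma_n$ that, by planar monotonicity, lie to the right of the \emph{rightmost} geodesic $\eta^2$, and their limit need only dominate $\eta^2$; approximating from below gives limits dominated by $\eta^1$. Nothing forces these two limits to coincide, so no contradiction arises. This is not a technicality---it is exactly the phenomenon one is trying to rule out. Your instinct that the real proof goes through a duality/time-reversal argument is on the mark (the cited paper is literally titled around this duality): one passes to the dual landscape so that a bubble becomes a pair of geodesics emanating from a common interior point of a longer geodesic, and then uses the tree/overlap structure of geodesics from such points to reach a contradiction. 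Reproducing that argument would require importing substantial machinery from \cite{bhatia2023duality}; for the purposes of this paper, citing the result is appropriate.
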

 Recall that for a geodesic $\pi:[s,t]\rightarrow\R$, we let $[\pi]=[\pi(r)]_{r\in[s,t]}$ denote the graph of $\pi$ in $\R^2$. The following result says that geodesics with close endpoints tend to overlap. The result originally appeared in \cite[Lemma 3.3]{Dauvergne-Sarkar-Virag-2020}, the following version is from \cite[Lemma B.12]{BSS22}.
 \begin{lemma}[{\cite[Lemma 3.3]{Dauvergne-Sarkar-Virag-2020}}]\label{lm:14}
 	The following holds on a single event of full probability. Let $(p_n;q_n)=(x_n,s_n,;y_n,u_n)\in\R_\uparrow\rightarrow(p;q)=(x,s;y,u)\in\R_\uparrow$, and let $g_n$ be any sequence of geodesics from $p_n$ to $q_n$. Suppose that either
 		\begin{enumerate}
 			\item For all $n$, $g_n$ is the unique geodesic from $(x_n,s_n)$ to $(y_n,u_n)$ and $[g_n]\rightarrow [g]$ for some geodesic $g$ from $p$ to $q$, or
 			\item There is a unique geodesic $g$ from $p$ to $q$.
 		\end{enumerate}
 		Then, the \textbf{overlap} 
 		\begin{equation}
 			O(g_n,g):=\{t\in[s_n,u_n]\cap [s,u]:g_n(t)=g(t)\}
 		\end{equation}
 		is an interval for all $n$ whose endpoints converge to $s$ and $u$.
 \end{lemma}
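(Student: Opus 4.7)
The plan is to work on a single full-probability event on which the directed landscape $\dl$ and its extension to $\Rufk$ are continuous, the no-bubble conclusion of Lemma \ref{lm:MB} holds, the fluctuation bound of Lemma \ref{lm:12} holds, and the parabolic Airy lines of Theorem \ref{thm:DZ} satisfy the strict pointwise ordering $\ele_1>\ele_2$.

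I would first establish that $g_n\to g$ uniformly on $[s\vee s_n,\,u\wedge u_n]$. Under hypothesis (1) this follows from the Hausdorff convergence $[g_n]\to[g]$ together with the fact that geodesics in the directed landscape are continuous functions of time. Under hypothesis (2) I would use Lemma \ref{lm:12} to place all $[g_n]$ in a common (random) compact set; any subsequential Arzela-Ascoli limit $\tilde g$ is then a continuous path from $p$ to $q$ satisfying the partition equality with $\dl(p,q)$, obtained by passing to the limit in the partition equality for $g_n$ via continuity of $\dl$ and the convergence $\dl(p_n,q_n)\to\dl(p,q)$. Hence $\tilde g$ is a geodesic from $p$ to $q$, and the uniqueness hypothesis yields $\tilde g=g$, upgrading to convergence of the full sequence.

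Second, I would show $O(g_n,g)$ is a closed interval. Closedness is automatic from continuity. For intervality, if $g_n(t_1)=g(t_1)$ and $g_n(t_2)=g(t_2)$ with $t_1<t_2$, then $g_n|_{[t_1,t_2]}$ and $g|_{[t_1,t_2]}$ are geodesics between the shared endpoints; since Lemma \ref{lm:MB} identifies $g|_{(t_1,t_2)}$ as the unique geodesic between its endpoints, $g_n=g$ throughout $[t_1,t_2]$. Thus $O(g_n,g)=[\alpha_n,\beta_n]$ as a closed interval, adopting the convention $\beta_n=s\vee s_n$ when the overlap is empty so that the argument below simultaneously handles both cases.

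The crux, and the main obstacle, is showing $\alpha_n\to s$ and $\beta_n\to u$. Suppose, for contradiction, that $\beta_n\to\beta<u$ along a subsequence. Fix $\delta>0$ with $\beta+\delta<u$, set $u_n^\star:=u\wedge u_n$, and consider the restrictions
\begin{equation*}
\pi_1^n:=g_n\big|_{[\beta_n+\delta,\,u_n^\star]},\qquad \pi_2^n:=g\big|_{[\beta_n+\delta,\,u_n^\star]},
\end{equation*}
whose graphs are strictly disjoint (by definition of $\beta_n$ together with the fact that two continuous paths sharing no common points preserve their strict ordering). Letting $\bm x_n,\bm y_n$ denote the ordered endpoint pairs at times $\beta_n+\delta$ and $u_n^\star$, Theorem \ref{thm:do} yields
\begin{equation*}
\dl(\pi_1^n)+\dl(\pi_2^n)\;\leq\;\dl(\bm x_n,\,\beta_n+\delta;\,\bm y_n,\,u_n^\star).
\end{equation*}
Uniform convergence forces both coordinates of $\bm x_n$ to $g(\beta+\delta)$ and both of $\bm y_n$ to $y$; by continuity of $\dl$, the left-hand side tends to $2\,\dl(g(\beta+\delta),\beta+\delta;\,y,u)$ and the right-hand side tends to $\dl((g(\beta+\delta),g(\beta+\delta)),\beta+\delta;\,(y,y),u)$. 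The contradiction now comes from the almost sure strict inequality
\begin{equation*}
\dl((z,z),r;\,(w,w),t)\;<\;2\,\dl(z,r;\,w,t),
\end{equation*}
which via Lemma \ref{lm:prp} and Theorem \ref{thm:DZ} reduces to the positivity $\ele_1(\hat w)-\ele_2(\hat w)>0$ of the Airy gap at the rescaled target $\hat w$. A time-reflection argument gives $\alpha_n\to s$, completing the proof. The technical subtleties I anticipate are the upper semicontinuity of the extended landscape through degenerate endpoint configurations and the uniformity of the strict Airy gap; both are supplied by Theorem \ref{thm:DZ} together with non-intersection of the parabolic Airy line ensemble.
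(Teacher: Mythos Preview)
The paper does not contain its own proof of this statement: it is quoted from \cite[Lemma~3.3]{Dauvergne-Sarkar-Virag-2020} (in the version of \cite[Lemma~B.12]{BSS22}) and used as a black box. So there is no in-paper argument to compare against; your proposal should be read as an independent alternative proof.

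Your route through the extended landscape is creative, but there is a genuine gap at the final step. The reduction ``via Lemma~\ref{lm:prp} and Theorem~\ref{thm:DZ}'' to the Airy gap $\ele_1-\ele_2>0$ only delivers, for each \emph{fixed} quadruple $(z,r;w,t)$, that $\dl(z^2,r;w^2,t)<2\,\dl(z,r;w,t)$ almost surely: Lemma~\ref{lm:prp} provides distributional identities, not pathwise ones, and the coupling of Theorem~\ref{thm:DZ} is for the specific family $(0^k,0;y^k,1)$. In your contradiction the quadruple $(g(\beta+\delta),\beta+\delta;y,u)$ is \emph{random} and depends on the realisation, so you need the strict inequality simultaneously for all $(z,r;w,t)$ on a single full-probability event. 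That upgrade is precisely the content of Lemma~\ref{lm:MB}: equality at some quadruple would produce two disjoint geodesics sharing both endpoints, i.e.\ a bubble. So your argument in fact leans on Lemma~\ref{lm:MB} both for intervality of $O(g_n,g)$ and for the strict gap. Since \cite{bhatia2023duality} and \cite{Dauvergne-Zhang-2021} postdate \cite{Dauvergne-Sarkar-Virag-2020}, you should verify that neither of these inputs itself invokes the overlap lemma you are proving; otherwise the argument is circular. You also use continuity of the extended landscape through degenerate endpoint configurations $\bm x_n\to(z,z)$, $\bm y_n\to(w,w)$; this is true in \cite{Dauvergne-Zhang-2021} but is not among the statements recorded in the present paper, so it needs an explicit citation.

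By contrast, the proof in \cite{Dauvergne-Sarkar-Virag-2020} is elementary and self-contained, using only precompactness of geodesics and uniqueness of interior restrictions of a geodesic (a much weaker and earlier fact than the full no-bubble theorem). Your approach has the conceptual appeal of identifying the obstruction with the Airy gap, but it is logically heavier and, as written, not yet closed.
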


\section{Reducing the problem to a bound on the existence of disjoint geodesics in a trapezoid}\label{sec:fgeo}
 In this section we reduce the proof of Theorem \ref{thm:4} to a statement about geodesics with finite time span. Throughout this section we fix arbitrary $\cdir>1$ show that there are no three non-coalescing infinite geodesics going in a direction $\xi\in [-\cdir,\cdir]$. 
Define  the box 
\begin{equation}\label{eq140}
	\smbox:=\{u=(u_1,u_2):0\leq u_2\leq 1,-1/2\leq u_1\leq 1/2\}.
\end{equation}
For $M>1$, consider a small box of dimension $ 6\cdir \Mbox \times\Mbox$, with its lower edge on the real line (see Figure \ref{fig:1}). For $\Mi>2M$, we divide the set $[-\cdir \Mi,\cdir \Mi]\times\{\Mi\}$ into intervals of size $l>0$ i.e.\ define 
\begin{equation}
	I_i:=[-\frac l2 +il,\frac l2+il)\times \{\Mi\}, \qquad \forall i\in\Ind
\end{equation}
where
\begin{equation}\label{eq89}
	\Ind:=\Big\llbracket -\frac{\cdir \Mi}l-2,\frac{\cdir \Mi}l+2\Big\rrbracket 
\end{equation}
The intervals $\{I_i\}$ are disjoint and 
\begin{equation}
	\Big[-\cdir \Mi-\tfrac52l,\cdir \Mi+\tfrac52l\Big]\times\{\Mi\}\subseteq\bigsqcup _{i\in\Ind}I_i.
\end{equation}
Let $\hat{p}^i$ be the center of the interval $I_i$ i.e.\ 
\begin{equation}\label{eq147}
	\hat{p}^i=(il,\Mi), \qquad \forall\,\, i\in\Ind.
\end{equation}
We also denote by $y^i$ the first component of the left endpoint of the interval $I_i$ i.e.\
\begin{equation}
	y^i:=-\frac l2 +il, \qquad \forall\,\, i\in\Ind. 
\end{equation}
This way one has that $(y^i,\Mi)$ and $(y^{i+1},\Mi)$ are the left and right points of the line segment $I_i\subseteq\R^2$.   For a point $p\in\R^2$ and a given $\xi\in\R$ we denote by $\pi^\xi_p$ the almost sure unique geodesic directed at $\xi$. For $\xi\in \R$, we define 
 \begin{equation}
 \begin{aligned}
 		\pi^\xi_L&:=\pi^\xi_{(-\cdir \Mbox,0)}\\
 		\pi^\xi_R&:=\pi^\xi_{(\cdir \Mbox,0)}.
 \end{aligned}
 \end{equation}
 We define the function 
 \begin{equation}
 	\xi:\R^2 \rightarrow \R,
 \end{equation}
that takes points and sends them  to the direction associated with the their position with  respect to the origin, i.e.\
\begin{equation}
	\xi(p)=\frac{p_1}{p_2}.
\end{equation}
Set 
\begin{equation}
	\xi^i=\xi(\hat{p}^i), \qquad  \forall i\in \Ind,
\end{equation}
  so that we associate a direction with each interval $I_i$. Denote $i_L=\inf\Ind$ and $i_R=\sup\Ind$. Define the following events
  \begin{equation}\label{eq90}
  	\begin{aligned}
  		A^{\Mi,\Mbox,l}_1&:=\Big\{ y^{i-1}< \pi^{\xi^{i-1}}_L(\Mi),\pi_R^{\xi^{i+1}}(\Mi)<y^{i+2}, \quad \forall i\in\llbracket i_L+1,i_R-1 \rrbracket\Big\}\\
  		A^{\Mi,\Mbox,l}_2&:=\Big\{\sup_{0\leq s\leq 1}\pi_L^{\xi^{i-1}}(s)<-1/2,\inf_{0\leq s\leq 1}\pi_R^{\xi^{i+1}}(s)>1/2, \quad \forall i\in\llbracket i_L+1,i_R-1 \rrbracket\Big\}\\
  		A^{\Mi,\Mbox,l}_3&:=\Big\{\pi_L^{\xi^{i-1}}(M)>-3\cdir\Mbox,\pi_R^{\xi^{i+1}}(M)<3\cdir\Mbox, \quad\forall i\in\llbracket i_L+1,i_R-1 \rrbracket\Big\}\\
  		A^{\Mi,\Mbox,l}&:=A^{\Mi,\Mbox,l}_1\cap A^{\Mi,\Mbox,l}_2\cap A^{\Mi,\Mbox,l}_3.
  	\end{aligned}
  \end{equation} 
A typical  realization of the event $A^{\Mi,\Mbox,l}$ is illustrated in Figure \ref{fig:1}. The event $A^{\Mi,\Mbox,l}$  allows us to control infinite geodesics on a finite time horizon. The following result tells us that it occurs with high probability.  
\begin{lemma}\label{lm:10}
	For  $\Mi>2\Mbox>32$,  there exist constants $c,C>0$, independent of  $\cdir,\Mbox,l$ and $\Mi$ such that 
	\begin{equation}
		\P(A^{\Mi,\Mbox,l})>1-C\exp\Big[{-c\Big(\frac{l}{\Mi^{2/3}[\log\log \Mi]^{1/3}}\Big)^3}\Big]\Big(\frac{2\cdir \Mi}l\Big)-Ce^{-c\cdir^3\Mbox[\log\log M]^{-1}}.
	\end{equation} 
\end{lemma}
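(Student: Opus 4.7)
The plan is to bound $\P((A^{U,M,l})^c)\le \P(A_1^c)+\P(A_2^c)+\P(A_3^c)$ by analyzing each $A_j$ separately. The three events constrain the geodesics $\pi_L^{\xi^{i-1}}$ and $\pi_R^{\xi^{i+1}}$ at three distinct time scales ($U$, $O(1)$, and $M$), so they call for Lemma \ref{lm:9} applied at three different scales.

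For $A_1$, I would apply Lemma \ref{lm:9} to each $\pi_L^{\xi^{i-1}}$ (starting at $(-\cdir M,0)$ with direction $\xi^{i-1}=(i-1)l/U$) and each $\pi_R^{\xi^{i+1}}$ at time $t=U$: the lemma places $\pi_L^{\xi^{i-1}}(U)$ within $C U^{2/3}(\log\log U)^{1/3}$ of the affine prediction $-\cdir M+(i-1)l$, with $C$ having sub-Gaussian-cubed tail and rate uniform in the direction. Since $y^{i-1}=(i-1)l-l/2$, the failure $\pi_L^{\xi^{i-1}}(U)\le y^{i-1}$ forces a downward fluctuation of magnitude at least $l/2-\cdir M$, which is of order $l$ in the regime $l\gg \cdir M$ (the only regime where the first term of the claimed bound is non-trivial). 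Inserting this threshold into the tail of $C$ yields a per-direction failure probability at most $c\exp(-c'(l/(U^{2/3}(\log\log U)^{1/3}))^3)$, and a union bound over $|\Ind|\le 2\cdir U/l + O(1)$ indices produces the first term.

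For $A_3$, Lemma \ref{lm:9} at $t=M$ places the expected position of $\pi_L^\xi(M)$ in $[-2\cdir M,0]$ for $|\xi|\le \cdir$, so the failure $\pi_L^\xi(M)\le -3\cdir M$ requires the constant $C$ to exceed $\cdir M^{1/3}(\log\log M)^{-1/3}$, an event of probability at most $c\exp(-c'\cdir^3 M/\log\log M)$ by the moment bound $\E[a^{C^3}]<\infty$. For $A_2$, viewing the restriction $\pi_L^\xi|_{[0,1]}$ as the a.s.\ unique finite geodesic between its two endpoints (by Lemma \ref{lm:MB}) and noting that both endpoints lie far below $-1/2$ once $M>32$, Lemma \ref{lm:12} bounds the fluctuation of this finite geodesic on $[0,1]$ by a random constant with stretched-exponential tail, producing a probability strictly sub-leading relative to the $A_3$ bound. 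To avoid an extra factor of $|\Ind|$ in front of the second exponential, I would invoke pointwise monotonicity of $\pi^\xi_p$ in $\xi$ at a fixed starting point $p$ (which on a single event of full probability follows from sandwiching by the Busemann geodesics $\pi_p^{\xi-,L}$ and $\pi_p^{\xi+,R}$, themselves monotone in $\xi$ via Theorem \ref{thm:3}) to reduce both $A_2^c$ and $A_3^c$ to the two extreme directions $\xi^{i_L}$ and $\xi^{i_R}$, giving the second term.

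The main obstacle is the arithmetic in Step 1: the true downward threshold is $l/2-\cdir M$ rather than $l$, so one must commit to the regime $l\gg \cdir M$ and absorb the gap into the multiplicative constants inside the cubic exponent, which is what makes $l/(U^{2/3}(\log\log U)^{1/3})$, rather than any combination involving $\cdir M$, the natural quantity appearing in the first term. A secondary subtlety is justifying direction-monotonicity of the a.s.-unique geodesic $\pi^\xi_p$ on a single event of full probability; one could instead sidestep it by retaining the factor $(\cdir U/l)$ in front of the second exponential and absorbing it into the exponent in the regime $\cdir^3 M/\log\log M\gg \log(\cdir U/l)$, at the cost of a slightly weaker constant.
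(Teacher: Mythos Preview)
Your overall strategy---union bound over $A_1^c,A_2^c,A_3^c$, Lemma~\ref{lm:9} at the three time scales $U$, $1$, $M$, and reduction of $A_2,A_3$ to the two extreme directions---is exactly what the paper does. Two points of comparison are worth recording.

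\textbf{The $A_2$ step.} The paper handles $A_2$ directly with (the $\sup_{s\in[0,t]}$ version of) Lemma~\ref{lm:9} at $t=1$: starting at $-\cdir M$, the event $\sup_{0\le s\le 1}\pi_L^{\xi}(s)\ge -1/2$ forces a deviation of order $\cdir M$ on a unit time interval, giving the $e^{-c(\cdir M)^3}$ bound immediately. Your detour through Lemma~\ref{lm:MB} and Lemma~\ref{lm:12} is more elaborate and, as written, has a small gap: to apply Lemma~\ref{lm:12} to the finite geodesic $\pi_L^\xi|_{[0,1]}$ you need both endpoints, but the time-$1$ endpoint $\pi_L^\xi(1)$ is random; controlling it would itself require Lemma~\ref{lm:9} at $t=1$, at which point the direct argument already gives what you want.

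\textbf{Monotonicity in direction.} Your worry here is unnecessary. The directions $\xi^i$ are deterministic (they depend only on $U,l$), so the finitely many geodesics $\pi_L^{\xi^{i}}$ all emanate from the fixed point $(-\cdir M,0)$ and, on a single event of full probability, are each the a.s.\ unique $\xi^i$-directed geodesic; ordinary ordering of geodesics from a common point in increasing fixed directions then gives $\pi_L^{\xi^{i_L}}\le\pi_L^{\xi^{i}}$ for all $i$ without invoking Theorem~\ref{thm:3}. This is the paper's ``order of geodesics'' step.

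Finally, your observation that the true threshold in the $A_1$ step is $l/2-\cdir M$ rather than $l/2$ is correct; the paper quietly applies Lemma~\ref{lm:9} centred at $\hat p^{i-1}_1$ rather than at the affine prediction $-\cdir M+(i-1)l$, which is harmless only because $l\gg \cdir M$ in the eventual application.
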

\begin{proof}
	 Our hypothesis on $\Mi$ and $\Mbox$ ensures that $\log\log \Mi> \log\log \Mbox>1$. From Lemma \ref{lm:9} with $\chi=(\pm\cdir\Mbox,0),s=0$ and $t=\Mi$, there exist constants $c,C>0$, independent of $\cdir,\Mbox,l$ and $\Mi$ s.t.\
	\begin{equation}
	\begin{aligned}
			&\P(\pi_L^{\xi^{i-1}}(\Mi)<y^{i-1} \text{ or } \pi_R^{\xi^{i+1}}(\Mi)>y^{i+2}) \\
			&\leq\P(|\pi_L^{\xi^{i-1}}(\Mi)-\hat{p}^{i-1}_1|>l/2)+\P(|\pi_R^{\xi^{i+1}}(\Mi)-\hat{p}^{i+1}_1|>l/2)\\
		&\leq C\exp\Big[{-c\Big(\frac{l}{\Mi^{2/3}[\log\log \Mi]^{1/3}}\Big)^3}\Big], \qquad \forall i\in \Ind.
	\end{aligned}
	\end{equation}
A union bound on  $i\in\Ind$  gives
\begin{equation}\label{eq1}
	\P\Big(A^{\Mi,\Mbox,l}_1\Big)  > 1-C\exp\Big[{-c\Big(\frac{l}{\Mi^{2/3}[\log\log \Mi]^{1/3}}\Big)^3}\Big]\Big(\frac{2\cdir \Mi}l+5\Big),
\end{equation}
where we used \eqref{eq89} to conclude that $|\Ind|\leq \Big(\frac{2\cdir \Mi}l+5\Big)$. Next note that by order of geodesics
\begin{equation}\label{eq49}
	A^{\Mi,\Mbox,l}_2\supseteq \Big\{\sup_{0\leq s\leq 1}\pi_L^{\xi^{i_R}}(s)<-1/2,\inf_{0\leq s\leq 1}\pi_R^{\xi^{i_L}}(s)>1/2\Big\}.
\end{equation}
From Lemma \ref{lm:9} with $\chi=(\pm\cdir\Mbox,0),s=0$ and $t=1$
\begin{equation}\label{eq50}
	\P(	A^{\Mi,\Mbox,l}_2)\geq \P\Big(\sup_{0\leq s\leq 1}\pi_L^{\xi^{i_R}}(s)< -1/2 \,\text{ or }\inf_{0\leq s\leq 1}\pi_R^{\xi^{i_L}}(s)> 1/2\Big)>1- Ce^{-c(\cdir\Mbox)^3}.
\end{equation}
Next, by order of geodesics
\begin{equation}
	A^{\Mi,\Mbox,l}_3\supseteq \Big\{\pi_L^{\xi^{i_L}}(M)>-3\cdir \Mbox, \pi_R^{\xi^{i_R}}(M)<3\cdir \Mbox\Big\}.
\end{equation}
Again, using Lemma \ref{lm:9}  with $\chi=(\pm\cdir\Mbox,0),s=0$ and $t=\Mbox$ we can bound the probability of the event on the right hand side of the last display to obtain 
\begin{equation}\label{eq51}
	\P\big(A^{\Mi,\Mbox,l}_3\big)>1- Ce^{-c(\cdir\Mbox^{1/3}[\log\log M]^{-1/3})^3}.
\end{equation}
Using  \eqref{eq1}, \eqref{eq50} and \eqref{eq51} we obtain the result.
\end{proof}
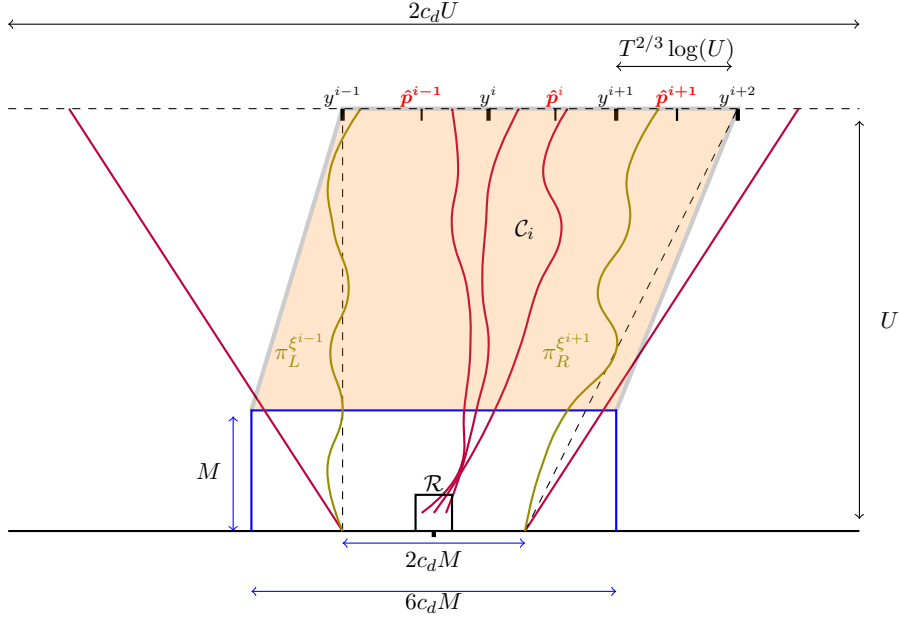
\begin{figure}[]
	\centering
	\begin{tikzpicture}[scale=0.8, every node/.style={scale=0.8}]
		\draw[ thick] (-7,0) -- (7,0);
		\draw[blue,thick] (-3,0) -- (-3,2)--(3,2)--(3,0);
		\draw[dashed] (-7,7) -- (7,7);
		\draw[<->] (-7,8.4) -- (7,8.4);
		\draw[<->] (7,0.2) -- (7,6.8);
		\draw[thick,purple] (1.5,0) -- (6,7);
		\draw[thick,purple] (-1.5,0) -- (-6,7);
		\draw[dashed] (1.5,0) -- (5,7);
		\draw[dashed] (-1.5,0) -- (-1.5,7);
		
		%
		%
		\draw[blue, <->] (-3,-1) -- (3,-1);
		\draw[blue, <->] (-3.3,0) -- (-3.3,1.9);
		\node [scale=1][] at (7.5,3.5) {$\Mi$};
		\node [scale=1][] at (0,-1.2) {$6\cdir \Mbox$};
		
		\draw[blue, <->] (-1.5,-0.2) -- (1.5,-0.2);
		\node [scale=1][] at (0,-0.5) {$2\cdir \Mbox$};

		\node [scale=1][] at (-3.7,1) {$ \Mbox$};
		
		\node [scale=1,thick] at (0,8.6) {$2\cdir \Mi$};
		
		\draw[thick] (2,7) -- (2,6.8);
		\node [scale=1,thick] at (2,7.2) {\footnotesize \color{red}$\bm{\hat{p}}^i$};
		
		\draw[<->] (3,7.7) -- (4.9,7.7);
		\node [scale=1,thick] at (4,8){$T^{2/3}\log (\Mi) $};
		
		\draw [purple, thick] plot [smooth, tension=0.8] coordinates { (-0.2,0.3) (0.42,1) (0.5,2) (0.6,3) (0.55,4) (0.3,5) (0.4,6) (0.3,7)}; 
		\draw [purple, thick] plot [smooth, tension=0.8] coordinates { (0.2,0.3) (0.45,1) (0.7,2) (0.9,3) (0.8,4) (0.85,5) (0.97,6) (1.4,7)};
		\draw [purple, thick] plot [smooth, tension=0.8] coordinates { (0,0.3) (0.47,1) (1,2) (1.4,3) (1.7,4) (2.1,5) (1.8,6) (2.2,7)};
		
		\draw [olive, thick] plot [smooth, tension=0.8] coordinates { (-1.5,0) (-1.75,1) (-1.5,2) (-1.7,3) (-1.4,4) (-1.65,5) (-1.7,6) (-1.2,7)}; 
		\draw [olive, thick] plot [smooth, tension=0.8] coordinates { (1.5,0) (1.75,1) (2.2,2) (3,3) (2.7,4) (3.2,5) (3.2,6) (3.7,7)}; 
		\node [scale=1,thick] at (0.9,7.2) {\footnotesize$y^i$};
		\draw[ultra thick] (0.9,7) -- (0.9,6.8);
		
		\node [scale=1,thick] at (3,7.2)
		{\footnotesize$y^{i+1}$};
		\draw[ultra thick] (3,7) -- (3,6.8);
		
		\node [scale=1,thick] at (5,7.2)
		{\footnotesize$y^{i+2}$};
		\draw[ultra thick] (5,7) -- (5,6.8);
		
		\node [scale=1,thick] at (-1.5,7.2)
		{\footnotesize$y^{i-1}$};
		\draw[ultra thick] (-1.5,7) -- (-1.5,6.8);
		
		\node [scale=1,thick] at (-0.2,7.2)
		{\footnotesize\color{red}$\bm{\hat{p}^{i-1}}$};
		\draw[thick] (-0.2,7) -- (-0.2,6.8);
		
		\node [scale=1,thick] at (4,7.2)
		{\footnotesize\color{red}$\bm{\hat{p}^{i+1}}$};
		\draw[thick] (4,7) -- (4,6.8);		
		
		\draw [ultra thick, draw=black, fill=orange, opacity=0.2]
		(3,2) -- (5,7)--(-1.5,7)--(-3,2); -- cycle;
		\node [scale=1,thick] at (1.5,5) {$\cone_i$};
		\draw[ultra thick]	(0,0) -- (0,-0.1);
		\draw[thick]	(-0.3,0) -- (-0.3,0.6)--(0.3,0.6)--(0.3,0);
		\node [scale=1,thick] at (0,0.8) {$\smbox$};
		
		\node [scale=1,thick] at (-2.2,3)
		{\color{olive}$\pi_{L}^{\xi^{i-1}}$};
		
		\node [scale=1,thick] at (2.2,3)
		{\color{olive}$\pi_{R}^{\xi^{i+1}}$};
		\draw[thick] (4,7) -- (4,6.8);
			
	\end{tikzpicture}
	\caption{An illustration of a typical realization of the event $A^{\Mi,\Mbox,l}$. By order of geodesics, the infinite geodesics $\pi_{L}^{\xi^{i-1}}$ and $\pi_{R}^{\xi^{i+1}}$ sandwich all infinite geodesics starting from $\smbox$ and going in direction $\xi^i$. Our main result will show that contrary to the illustration, with high probability, there are no more than two distinct geodesics going in direction $\xi^i$ and that are disjoint between time $M$ and time $\Mi$.}
	\label{fig:1}
\end{figure}
 For $p,q\in\R^2$ such that $p_2<q_2$ and $a,b>0$ we define
  \begin{equation}
 	\cone=\trpz(p,a;q,b):=\cone^{up}\cup \cone^{dn},
 \end{equation}  
where
 \begin{equation}\label{updn}
 	\begin{aligned}
 		\cone^{dn}:=&\{p+(x,0):x\in(-a/2,a/2)\}\\
 		\cone^{up}:=&\{q+(x,0):x\in(-b/2,b/2)\}.
 	\end{aligned}
 \end{equation}
Somewhat abusing the term, we refer to  $\cone$ as a trapezoid with upper and lower bases $\cone^{up}$ and $\cone^{dn}$ respectively. If $\cone=\trpz(p,a;q,b)$, we define $\cone^{dn}$ and $\cone^{up}$ as in \eqref{updn}.
 For a finite set of geodesics $\{\pi_{p^1}^1,...,\pi_{p^k}^k\}$ emanating from the points  $\bm{p}=(p^1,\ldots,p^k)\in(\R^2)^k$, we define the branching time $\brnc{\pi_{p^1}^1,...,\pi_{p^k}^k}$ to be the time from which onward the geodesics do not meet i.e.\ if $s_{\bm{p}}=\max\{p^i_2:i\in[k]\}$
 \begin{equation}\label{eq24}
 	\brnc{\pi_{p^1}^1,...,\pi_{p^k}^k}=\sup\{t>s_{\bm{p}}:\text{the sets $[ \pi_{p^i}^i(r)]_{t\leq r \leq \infty}$ for $i\in[k]$ are not mutually disjoint}\}.
 \end{equation}
   In case the set on the RHS of \eqref{eq24} is empty we set $\brnc{\pi_{p^1}^1,...,\pi_{p^k}^k}=s_{\bm{p}}$. We note that the geodesic $\pi^{i}_{p^i}$ has two  annotations of  $i$ as it is possible that $p^i=p^j$ for $i,j\in\{1,2,3\}$.
   Thus, $\brnc{\pi_{p^1}^1,...,\pi_{p^k}^k}$ is the latest point in time where any pair of the  geodesics $\pi_{p^1}^1,...,\pi_{p^k}^k$ intersects. When $\brnc{\pi_p^1,\pi_q^2}<\infty$ ($\brnc{\pi_p^1,\pi_q^2}=\infty$) for some $p,q\in\R^2$, we  say the geodesics $\pi_p^1,\pi_q^2$ do not meet (meet) at infinity.  Define the events
\begin{equation}
	\begin{aligned}
		\Thg^{\cdir}:=&\{\exists\xi\in[-\cdir ,\cdir ], \bm{p}=(p^1,p^2,p^3)\in\smbox^3: \small\text{   there exist three geodesics $\{\pi_{p^1}^{\xi,1},\pi_{p^2}^{\xi,2},\pi_{p^3}^{\xi,3}\}$}\\
		&\text{leaving from $\bm{p}$ in direction $\xi$ s.t\ $\brnc{\pi_{p^1}^{\xi,1},\pi_{p^2}^{\xi,2},\pi_{p^3}^{\xi,3}}<\infty$}\}
			 \\
		\Thg^{\cdir,M}:=&\{\exists\xi\in[-\cdir ,\cdir ], \bm{p}=(p^1,p^2,p^3)\in\smbox^3: \small\text{  \ there exist three geodesics $\{\pi_{p^1}^{\xi,1},\pi_{p^2}^{\xi,2},\pi_{p^3}^{\xi,3}\}$} \\
		&\text{leaving from $\bm{p}$ in direction $\xi$ s.t.\  $\brnc{\pi_{p^1}^{\xi,1},\pi_{p^2}^{\xi,2},\pi_{p^3}^{\xi,3}}<M$}\}.
	\end{aligned}
\end{equation}
 Although the events above depend on $\cdir$, we shall  use $	\Thg$ and $	\Thg^{M}$ to ease the notation.
Next we define 
\begin{equation}
	\uni=\big\{\gamma: \text{$\gamma$ is the unique geodesic between its endpoints}\big\}.
\end{equation}
In words, $\uni$ is a random set that consists of all geodesics in DL that are the unique geodesic between their endpoints. Let $\cone= \trpz\big((x,s),a;(y,t),b\big)$ be a trapezoid. We say a path $\gamma$ is \textit{a geodesic in $\cone$} if $\gamma$ is a geodesic and $\gamma(s)\in\coned$ and $\gamma(t)\in\coneu$. In other words, $\gamma$ is a geodesic in $\cone$ if its endpoints lie in the bases of the trapezoid $\cone$. We define the event where there are $k$ disjoint geodesics going from $\cone^{dn}$ to $\cone^{up}$ i.e. 
\begin{equation}\label{dijs}
		\disjc^k_{\cone}:=\{\text{there exist $k$ disjoint geodesics  $\pi^1,\ldots,\pi^k\in \uni$ in $\cone$}\}.
\end{equation}
 Whenever the event $\disjc^k_{\cone}$ occurs we say that \textit{there exist $k$ disjoint geodesics in the trapezoid $\cone$}. Next we associate a trapezoid with each $\hat{p}^i$ (recall \eqref{eq147})
\begin{equation}\label{eq119}
	\begin{aligned}
		\cone_i:=\trpz\Big((0,\Mbox),6\cdir\Mbox;\hat{p}^i, 3l\Big) \qquad i\in\Ind.
	\end{aligned}
\end{equation}
\begin{lemma}\label{lem:ub}
	For fixed $\cdir,M>16$ and $\Mi>2M$ the following bound holds
	\begin{equation}
		\P\big(\Thg^{M}\cap A^{\Mi,\Mbox,l}\big)\leq \Big(\frac{2\cdir \Mi}l+5\Big)\sup_{i\in\Ind}\P\big(\disjc^{\,3}_{\cone_i}\big).
	\end{equation}
\end{lemma}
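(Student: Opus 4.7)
The plan is to show that, up to a probability-one event, $\Thg^M\cap A^{\Mi,\Mbox,l}$ is contained in the union $\bigcup_{i\in\Ind}\disjc^3_{\cone_i}$, and then apply a union bound using $|\Ind|\leq 2\cdir\Mi/l+5$. So fix a realization in $\Thg^M\cap A^{\Mi,\Mbox,l}\cap \Omega_{\rm NoBubble}$, together with $\xi\in[-\cdir,\cdir]$ and three geodesics $\pi^j:=\pi_{p^j}^{\xi,j}$ $(j=1,2,3)$ starting from $p^j\in \smbox$ in direction $\xi$ with $\brnc{\pi^1,\pi^2,\pi^3}<M$. Since the grid of directions $\{\xi^i\}_{i\in\Ind}$ has mesh $l/\Mi$ and covers $[-\cdir,\cdir]$ with a margin, one can select $i\in\llbracket i_L+1,i_R-1\rrbracket$ with $\xi^{i-1}<\xi<\xi^{i+1}$.

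The key step is to sandwich each $\pi^j$ between the two control geodesics $\pi_L^{\xi^{i-1}}$ and $\pi_R^{\xi^{i+1}}$ on the time interval $[M,\Mi]$. On $A_2^{\Mi,\Mbox,l}$ the geodesic $\pi_L^{\xi^{i-1}}$ stays strictly to the left of $\smbox$ on $[0,1]$ and $\pi_R^{\xi^{i+1}}$ strictly to the right. At time $p_2^j\in[0,1]$ we therefore have $\pi_L^{\xi^{i-1}}(p_2^j)<-1/2\leq \pi^j(p_2^j)\leq 1/2<\pi_R^{\xi^{i+1}}(p_2^j)$. Because the three infinite geodesics have strictly ordered directions $\xi^{i-1}<\xi<\xi^{i+1}$, their asymptotic slopes force the same ordering for all large $t$; planarity of geodesics in the DL (two geodesics can cross at most as part of a bounded bubble, and the NoBubble lemma rules this out between geodesics that are strictly ordered both near the start and near infinity) then propagates the order to every $t\geq 1$, giving $\pi_L^{\xi^{i-1}}(t)\leq \pi^j(t)\leq \pi_R^{\xi^{i+1}}(t)$ on $[1,\infty)$.

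Feeding this into the remaining two control events pins the endpoints in the bases of $\cone_i$. At time $\Mi$, $A_1^{\Mi,\Mbox,l}$ gives $y^{i-1}<\pi_L^{\xi^{i-1}}(\Mi)\leq \pi^j(\Mi)\leq \pi_R^{\xi^{i+1}}(\Mi)<y^{i+2}$, and since $(y^{i-1},y^{i+2})=(il-3l/2,il+3l/2)$ this is exactly $\cone_i^{up}$. Similarly, $A_3^{\Mi,\Mbox,l}$ yields $\pi^j(M)\in(-3\cdir M,3\cdir M)=\cone_i^{dn}$. Because $\brnc{\pi^1,\pi^2,\pi^3}<M$, the three paths $\pi^j|_{[M,\Mi]}$ are pairwise disjoint, and Lemma \ref{lm:MB} (applied to an open interval slightly enlarging $[M,\Mi]$, then lifted to the closed interval by concatenation) guarantees each restriction is the unique geodesic between its endpoints, i.e.\ lies in $\uni$. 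Consequently $\disjc^3_{\cone_i}$ holds.

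Putting it together, $\Thg^M\cap A^{\Mi,\Mbox,l}\subseteq \bigcup_{i\in\Ind}\disjc^3_{\cone_i}$ up to a null set, so the union bound with $|\Ind|\leq 2\cdir\Mi/l+5$ delivers the claim. The main technical subtlety is the ordering argument in the sandwich step: the directions are only strictly ordered, so one must justify that $\pi_L^{\xi^{i-1}}$ and $\pi^j$ (resp.\ $\pi^j$ and $\pi_R^{\xi^{i+1}}$) cannot cross on $[1,\Mi]$. This is where the events $A_2^{\Mi,\Mbox,l}$ and $A_3^{\Mi,\Mbox,l}$ are essential, since they provide strict spatial separation at times $\leq 1$ and at time $M$; combined with the strictly ordered asymptotic slopes and the NoBubble property for DL geodesics, crossings are excluded and the sandwich propagates throughout $[M,\Mi]$.
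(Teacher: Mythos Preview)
Your argument is correct and follows the same route as the paper: establish the inclusion $\Thg^M\cap A^{\Mi,\Mbox,l}\cap\Omega_{\rm NoBubble}\subseteq\bigcup_{i\in\Ind}\disjc^3_{\cone_i}$ and then apply a union bound with $|\Ind|\le 2\cdir\Mi/l+5$. The identification of the bases of $\cone_i$ via $A_1^{\Mi,\Mbox,l}$ and $A_3^{\Mi,\Mbox,l}$, the disjointness from $\brnc{\cdot}<M$, and the membership in $\uni$ via Lemma~\ref{lm:MB} all match the paper.

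The only point where you diverge is the justification of the sandwich $\pi_L^{\xi^{i-1}}\le\pi^j\le\pi_R^{\xi^{i+1}}$. The paper dispatches this in one line by invoking ``order of geodesics'' (the standard monotonicity of semi-infinite geodesics in starting point and direction), whereas you rebuild it from $A_2^{\Mi,\Mbox,l}$ (ordering at the start), the asymptotic slopes (ordering at infinity), and the NoBubble lemma (to forbid intermediate crossings). Your route is valid but your appeal to Lemma~\ref{lm:MB} is a bit elliptic: the actual mechanism is that a double crossing at times $t_c<t_e$ would let you swap the two paths on $[t_c,t_e]$ and produce two distinct finite geodesics between the same endpoints that coincide on neighbourhoods of both endpoints, which is exactly what Lemma~\ref{lm:MB} forbids. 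You should also note that when $\xi$ coincides with one of the grid directions $\xi^{i\pm1}$ the ``strictly ordered asymptotic slopes'' argument degenerates; this is harmless (shift $i$ by one, or replace the slope argument by a.s.\ uniqueness of the fixed-point fixed-direction geodesic $\pi_L^{\xi^{i-1}}$), but it is worth flagging. Finally, for the $\uni$ step there is no need to enlarge the interval: the consequence in Lemma~\ref{lm:MB} applied to $\pi^j|_{[p^j_2,T]}$ with the open subinterval $(M,\Mi)$ already gives uniqueness of $\pi^j|_{[M,\Mi]}$.
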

\begin{proof}
	Consider the event of full measure $\Omega_{\rm{NoBubble}}$ from Lemma \ref{lm:MB}. For every realization $\omega\in \Thg^{M}\cap A^{\Mi,\Mbox,l}\cap \Omega_{\rm{NoBubble}}$ there exists a direction $\xi_0\in[-\cdir , \cdir ]$ and  points $(p^1,p^2,p^3)\in\smbox^3$ s.t.\ there exist  geodesics $\{\pi_{p^i}^{\xi_0,i}\}_{i\in \{1,2,3\}}$ directed towards $\xi_0$ such that the planar paths
	\begin{equation}
		\{[\pi^{\xi_0,1}_{p^1}](s)\}_{s\geq M}, 	\{[\pi^{\xi_0,2}_{p^2}](s)\}_{s\geq M}, 	\{[\pi^{\xi_0,3}_{p^3}](s)\}_{s\geq M}, \quad \text{  are mutually disjoint}.
	\end{equation}
Let  $i_0\in\inti{i_L+1,i_R-1}$ be such that  $\xi_0\in I_{i_0}$. By order of geodesics
\begin{equation}\label{eq145}
	\pi_L^{\xi^{i_0-1}}(s)\leq \pi_{p^j}^{\xi_{0},j}(s)\leq \pi_R^{\xi^{i_0+1}}(s) \qquad \forall j\in\{1,2,3\},\forall s\geq 0.
\end{equation}
Recalling the event $A^{\Mi,\Mbox,l}$ from \eqref{eq90}, the last display implies that  
\begin{equation}\label{eq53}
	\omega\in\Big\{\pi_{p^j}^{\xi_0,j}(M)\in \coned_{i_0} \text{ and } \pi_{p^j}^{\xi_0,j}(\Mi)\in \coneu_{i_0},  \qquad\forall j\in\{1,2,3\}\Big \}.
\end{equation}
To see why the last statement is true, consider the trapezoid \eqref{eq119}, and note that the event $A^{\Mi,\Mbox,l}$ implies that
\begin{equation}
	\pi_L^{\xi^{i_0-1}}(M),\pi_R^{\xi^{i_0+1}}(M)\in  \coned_{i_0} \quad \text{ and } \quad \pi_L^{\xi^{i_0-1}}(U),\pi_R^{\xi^{i_0+1}}(U)\in  \coneu_{i_0},
\end{equation}   
which combined with \eqref{eq145} implies \eqref{eq53}.  As $\omega\in \Thg^{M}$ it holds that
\begin{equation}\label{eq52}
	\omega\in \{\brnc{\pi_{p^1}^{\xi_0,1},\pi_{p^2}^{\xi_0,2},\pi_{p^3}^{\xi_0,3}}<M\}.
\end{equation}
  We now claim that   $\omega\in \disjc^3_{\cone_{i_0}}$. Indeed, from \eqref{eq52}, \eqref{eq53}  it follows that on $\omega$, the geodesics $\{\pi^{\xi_{0},i}|_{[M,\Mi]}\}_{i\in\{1,2,3\}}$ are disjoint in $\cone_{i_0}$ and since $\omega\in \Omega_{\rm{NoBubble}}$ we conclude that $\{\pi^{\xi_{0},i}|_{[M,\Mi]}\}_{i\in\{1,2,3\}}\subseteq \uni$.  More generally, 
\begin{equation}
	\Thg^{M}\cap A^{\Mi,\Mbox,l}\cap \Omega_{\rm{NoBubble}}\subseteq \bigcup_{i\in\Ind}\disjc^3_{\cone_i}.
\end{equation}
We conclude the result using a union bound on the different trapezoids. 
\end{proof}
\begin{corollary}
	There exists $C_1>0$ such that for
	\begin{equation}
		l= C_1\Mi^{2/3}[\log\log \Mi]^{1/3}[\log(\Mi)]^{1/3},
	\end{equation}
it holds that 
	\begin{equation}\label{eq2}
		\P\big(\Thg\big)\leq \limsup_{M\rightarrow \infty}\limsup_{\Mi\rightarrow \infty}\Big(\frac{2\cdir \Mi}l+5\Big)\P\big(\disjc^3_{\cone_0}\big).
	\end{equation}
\end{corollary}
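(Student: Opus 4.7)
The plan is to combine Lemmas \ref{lem:ub} and \ref{lm:10}, use the monotonicity $\Thg^M\uparrow\Thg$, and reduce the supremum in Lemma \ref{lem:ub} to the single trapezoid $\cone_0$ via skew symmetry. First, $\Thg=\bigcup_{M\geq 1}\Thg^M$ (a finite branching time is bounded by some positive integer), so $\P(\Thg)=\lim_M\P(\Thg^M)$ by monotone convergence. For fixed $M,T,l$ with $T>2M>32$ I would split
\begin{equation*}
\P(\Thg^M)\;\leq\;\P(\Thg^M\cap A^{T,M,l})\;+\;\P\big((A^{T,M,l})^c\big),
\end{equation*}
bound the first term by Lemma \ref{lem:ub} and the second by Lemma \ref{lm:10}.

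To pass from $\sup_{i\in\Ind}\P(\disjc^3_{\cone_i})$ (as in Lemma \ref{lem:ub}) to $\P(\disjc^3_{\cone_0})$ (as stated in the corollary), I would invoke the skew symmetry of Lemma \ref{lm:prp} item 4, which holds jointly as an equality in distribution of functions on $\Rufk$. Since $\cone_i$ is obtained from $\cone_0$ by translating its upper base horizontally by $il$, I would express $\disjc^3_{\cone_i}$ almost surely, using Theorem \ref{thm:do}, as the existence of $\bm x\in(\coned_i)^3,\bm y\in(\coneu_i)^3$ with
\begin{equation*}
\dl(\bm x,M;\bm y,T)\;=\;\sum_{j=1}^{3}\dl(x_j,M;y_j,T).
\end{equation*}
The joint skew correction $(T-M)^{-1}\big[\|\bm x-\bm y\|_2^2-\|\bm x-T_{-il}\bm y\|_2^2\big]$ equals the sum of the three coordinate-wise corrections $(T-M)^{-1}\big[(x_j-y_j)^2-(x_j-y_j+il)^2\big]$, so applying Lemma \ref{lm:prp} item 4 with shift $-il$ transports the above pointwise identity in distribution to the analogous identity with top endpoints in $(\coneu_0)^3$. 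Hence $\P(\disjc^3_{\cone_i})=\P(\disjc^3_{\cone_0})$ for every $i\in\Ind$.

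Finally, I would choose $l=C_1 T^{2/3}[\log\log T]^{1/3}[\log T]^{1/3}$ with $C_1$ large enough that $cC_1^3>1/3$, where $c$ is the constant in Lemma \ref{lm:10}. Then the first term in the complement bound of Lemma \ref{lm:10} is of order $T^{1/3-cC_1^3}/[\log\log T\cdot\log T]^{1/3}$, which vanishes as $T\to\infty$, leaving $\limsup_T\P((A^{T,M,l})^c)\leq Ce^{-c\cdir^3 M[\log\log M]^{-1}}$. Combining with the preceding step gives
\begin{equation*}
\P(\Thg^M)\;\leq\;\limsup_{T\to\infty}\Big(\tfrac{2\cdir T}{l}+5\Big)\P(\disjc^3_{\cone_0})\;+\;Ce^{-c\cdir^3 M[\log\log M]^{-1}},
\end{equation*}
and letting $M\to\infty$ yields \eqref{eq2}.

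The most delicate step will be the skew-symmetry reduction in the second paragraph: Lemma \ref{lm:prp} item 4 involves a nontrivial quadratic correction, and one must verify that it is additive over the coordinates of $\bm x,\bm y$ so that it cancels cleanly from the pointwise identity characterizing $\disjc^3_{\cone_i}$. Once this bookkeeping is done, the remainder of the argument is a routine limit-taking exercise in $T$ and $M$.
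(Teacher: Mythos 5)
Your proposal is essentially the paper's argument: pass from $\Thg$ to $\Thg^M$ by monotonicity, intersect with the high-probability event $A^{\Mi,M,l}$, invoke Lemma \ref{lem:ub} to pass to a union over trapezoids, reduce to $\cone_0$ by skew symmetry, and choose $l$ so that the $A^{\Mi,M,l}$ error vanishes in the iterated limit. The paper phrases step two as dominated convergence applied to $\lim\lim\P(\Thg^M\cap A^{\Mi,M,l})=\P(\Thg)$ rather than isolating $\P((A^{\Mi,M,l})^c)$ as an additive error, but these are the same calculation.

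The one place where the two arguments genuinely diverge is the reduction $\P(\disjc^3_{\cone_i})=\P(\disjc^3_{\cone_0})$. You express $\disjc^3_{\cone_i}$ via the extended landscape identity $\dl(\bm x,M;\bm y,\Mi)=\sum_j\dl(x_j,M;y_j,\Mi)$ and transport it through the $k$-level skew symmetry, noting that the quadratic corrections are additive over coordinates. The paper (Lemma \ref{lm:8}) instead proves that the \emph{joint law of the geodesic field} on $\cone_c$ equals that on $\cone_0$ shifted by the affine map $r\mapsto cr$, and observes that disjointness of paths is affine-invariant. The paper's route is preferable for one reason you do not address: the definition of $\disjc^3_{\cone}$ requires the three geodesics to lie in $\uni$, i.e.\ each to be the unique geodesic between its endpoints. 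The extended-landscape equality you write down detects the existence of three disjoint geodesics but not their uniqueness; you would need a separate a.s.\ equivalence (of the sort carried out in the proof of Lemma \ref{lm:13}) to upgrade to the stated event. The geodesic-level argument in Lemma \ref{lm:8} transports the whole geodesic picture, including uniqueness, and sidesteps this issue. With that caveat, your proposal is sound and reaches the same bound.
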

\begin{proof}
	The following convergence holds a.s.\
	\begin{equation}\label{eq91}
		1_{\Thg^{M}}\uparrow  1_\Thg, \quad \text{ as $M\rightarrow \infty$.}
	\end{equation}
 Lemma \ref{lm:10} and  our hypothesis on $l$ dictate that 
\begin{equation}\label{eq92}
	\P\big(A^{\Mi,\Mbox,l}\big)\rightarrow 1,
\end{equation}
as first $\Mi \rightarrow\infty$ and then $\Mbox\rightarrow \infty$. Using the  Lebesgue convergence theorem in \eqref{eq91}, as well as  \eqref{eq92}  implies that 
	\begin{equation}
		 \lim_{\Mbox\rightarrow \infty}\lim_{\Mi\rightarrow \infty}\P\Big(\Thg^{M}\cap A^{\Mi,\Mbox,l}\Big)=\lim_{\Mbox\rightarrow \infty}\lim_{\Mi\rightarrow \infty}\P\Big(\Thg^{M}\Big)= \P\big(\Thg\big).
	\end{equation}
From Lemma \ref{lem:ub} and the last display
	\begin{equation}
	\P\big(\Thg\big)\leq \limsup_{M\rightarrow \infty}\limsup_{\Mi\rightarrow \infty}\Big(\frac{2\cdir \Mi}l+5\Big)\sup_{i\in\Ind}\P\big(\disjc^3_{\cone_i}\big).
\end{equation}
Finally, from the skew-stationarity of the DL (Lemma \ref{lm:prp}, we provide a proof for the display below in Lemma \ref{lm:8})
\begin{equation}\label{eq138}
	\P\big(\disjc^3_{\cone_i}\big)=\P\big(\disjc^3_{\cone_j}\big), \qquad \text{for all $i,j\in\Ind$}.
\end{equation}
   This concludes the result.
\end{proof}

	Equation \eqref{eq2} suggests that in order to prove our main result we should bound $\P\big(\disjc^3_{\cone_0}\big)$.
However, it would be more convenient to bound an event that is measurable only with respect to the data coming from the DL on a \textbf{fixed} finite time span. We therefore use translation invariance and scaling time by $\Mi(1-M/\Mi)$ (Lemma \ref{lm:prp}) to obtain
	\begin{equation}\label{eq23}
	\P\big(\disjc^3_{\cone'}\big)=\P\big(\disjc^3_{\cone_0}\big),
\end{equation}
where
\begin{equation}
		\cone':=\trpz\Big((0,0),6\cdir\Mbox \Mi^{-2/3}(1-M/\Mi)^{-2/3};(0,1), 3l\Mi^{-2/3}(1-M/\Mi)^{-2/3} \Big).
\end{equation}
Note that   since $\Mi>2M$, $(1-M/U)^{-2/3}<2$ and so
\begin{equation}\label{eq137}
	\cone'\subseteq\cone_{\text{scl}}:=\trpz\Big((0,0),12\cdir\Mbox \Mi^{-2/3};(0,1), 6l\Mi^{-2/3} \Big)=\trpz\Big((0,0),\epsilon;(0,1), L(\epsilon^{-1}) \Big),
\end{equation}
where we used the change of variables
\begin{equation}\label{eq120}
	\begin{aligned}
		\epsilon&:=12\cdir\Mbox \Mi^{-2/3}\\
		L(\epsilon^{-1})&:=6l\Mi^{-2/3}=6l(12\cdir\Mbox)^{-1}\epsilon.
	\end{aligned}
\end{equation}
Note that although $\epsilon$ depends on $M$ and $c_d$ we omit this dependence in the notation. From \eqref{eq137}
\begin{equation}
	\P\Big(\disjc^3_{\cone'_0}\Big)\leq \P\Big(\disjc^3_{\cone_{\text{scl}}}\Big).
\end{equation}
 Of what order should the bound on  $\P\big(\disjc^3_{\cone_{\text{scl}}}\big)$ be?  From \eqref{eq120}
 \begin{equation}
 	\frac{U}l=6U^{1/3}[L(\epsilon^{-1})]^{-1}=6\big(12\cdir\Mbox\big)^{1/2}\epsilon^{-1/2}[L(\epsilon^{-1})]^{-1},
 \end{equation}
  which, along with inequality \eqref{eq2}, suggests that we should aim for the following bound 
 \begin{equation}
 		\P\big(\disjc^3_{\cone_{\text{scl}}}\big)=o(\epsilon^{1/2}L(\epsilon^{-1})).
 \end{equation}
In other words, we have shown the following. 
 \begin{corollary}\label{cor:3}
 	Under the change of variables in \eqref{eq120}, for every $M>16$, there exists $C_2(M),\epsilon_0>0$ such that for 
 	\begin{equation}
 		L(\epsilon^{-1}):= C_2[\log\log(\epsilon^{-1})]^{1/3}[\log(\epsilon^{-1})]^{1/3}, \quad 0<\epsilon<\epsilon_0,
 	\end{equation}
 	it holds that 
 	\begin{equation}\label{eq121}
 		\P\big(\Thg\big)\leq \limsup_{M\rightarrow \infty}\limsup_{\epsilon\rightarrow 0}\Big(6\big(12\cdir\Mbox\big)^{1/2}\epsilon^{-1/2}[L(\epsilon^{-1})]^{-1}+5\Big)\P\big(\disjc^3_{\textrm{scl}}\big).
 	\end{equation}
 \end{corollary}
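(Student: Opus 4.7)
The plan is to carry out precisely the bookkeeping that the discussion preceding the statement has already sketched: start from the bound in the unnumbered corollary just above, apply translation invariance and the diffusive rescaling of Lemma \ref{lm:prp} to rewrite $\P(\disjc^3_{\cone_0})$ in terms of a trapezoid on the fixed time window $[0,1]$, and then substitute the change of variables \eqref{eq120}.

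First I shift $\cone_0 = \trpz((0,\Mbox), 6\cdir \Mbox;(0,\Mi), 3l)$ down by $\Mbox$ units in time and rescale by $q = [\Mi - \Mbox]^{1/3}$ using item~3 of Lemma \ref{lm:prp}. Since disjointness of $k$-tuples of geodesics is preserved under these transformations, $\P(\disjc^3_{\cone_0})$ equals $\P(\disjc^3_{\cone'})$ where $\cone' = \trpz((0,0), 6\cdir \Mbox (\Mi-\Mbox)^{-2/3};(0,1), 3l(\Mi-\Mbox)^{-2/3})$. The hypothesis $\Mi > 2\Mbox$ gives $(1-\Mbox/\Mi)^{-2/3} < 2$, so doubling both base widths yields the inclusion $\cone' \subseteq \cone_{\text{scl}}$ already recorded in \eqref{eq137}, and hence $\P(\disjc^3_{\cone_0}) \le \P(\disjc^3_{\cone_{\text{scl}}})$.

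Second, with $\epsilon$ and $L(\epsilon^{-1})$ defined as in \eqref{eq120}, I compute $\Mi^{1/3} = (12\cdir\Mbox)^{1/2} \epsilon^{-1/2}$, whence $\Mi/l = 6 \Mi^{1/3}/L(\epsilon^{-1}) = 6(12\cdir\Mbox)^{1/2} \epsilon^{-1/2} / L(\epsilon^{-1})$. Substituting this into the prefactor $2\cdir \Mi/l + 5$ from the preceding corollary reproduces, up to an inconsequential fixed multiplicative constant which can be absorbed into the outer $\limsup_{\Mbox \to \infty}$, the prefactor appearing in \eqref{eq121}.

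Finally I check that the allowed form of $L(\epsilon^{-1})$ agrees with the statement. From $l = C_1 \Mi^{2/3} [\log\log\Mi]^{1/3}[\log \Mi]^{1/3}$ and $L(\epsilon^{-1}) = 6l\Mi^{-2/3}$ one reads off $L(\epsilon^{-1}) = 6 C_1 [\log\log\Mi]^{1/3}[\log\Mi]^{1/3}$. Inverting $\epsilon = 12\cdir\Mbox \Mi^{-2/3}$ gives $\Mi = (12\cdir\Mbox/\epsilon)^{3/2}$, so that $\log \Mi = \tfrac32 \log(\epsilon^{-1}) + O(1)$ and $\log\log\Mi = \log\log(\epsilon^{-1}) + o(1)$ as $\epsilon \to 0$ with $\Mbox$ fixed, where the implicit constants depend on $\Mbox$ and $\cdir$. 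Choosing $\epsilon_0 = \epsilon_0(\Mbox)$ small enough and $C_2 = C_2(\Mbox)$ large enough to absorb these $\Mbox$-dependent lower-order corrections produces the claimed form $L(\epsilon^{-1}) = C_2 [\log\log(\epsilon^{-1})]^{1/3}[\log(\epsilon^{-1})]^{1/3}$. The whole argument is essentially a change-of-variables computation with careful bookkeeping; the only subtlety worth noting is that the constants $C_2$ and $\epsilon_0$ must be allowed to depend on $\Mbox$, which is compatible with the outer $\limsup_{\Mbox \to \infty}$ appearing in \eqref{eq121}, and no substantively new probabilistic input is required beyond the preceding corollary and the scaling properties of $\dl$.
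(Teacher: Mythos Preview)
Your proposal is correct and follows essentially the same route as the paper, which derives the corollary directly from the preceding discussion: rescale $\cone_0$ to the unit time window via Lemma \ref{lm:prp}, enlarge to $\cone_{\text{scl}}$ using $\Mi>2\Mbox$, and then substitute the change of variables \eqref{eq120}. One small remark: the factor $2\cdir$ you note as missing from the prefactor cannot literally be ``absorbed into $\limsup_{\Mbox\to\infty}$'' since $\cdir$ is fixed, but this is harmless because the right-hand side will be shown to vanish (Proposition \ref{prop:2}), so any fixed multiplicative constant is indeed inconsequential---the paper itself is equally casual on this point.
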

 In light of Corollary \ref{cor:3} the following result goes a long way to prove Theorem \ref{thm:2}
 \begin{proposition}\label{prop:2}
 	For every $M>16$, there exists $C_2(M),\epsilon_0>0$ such that for  
 	\begin{equation}
 		L(\epsilon^{-1}):= C_2[\log\log(\epsilon^{-1})]^{1/3}[\log(\epsilon^{-1})]^{1/3}, \quad 0<\epsilon<\epsilon_0,
 	\end{equation}
 	the following bound holds 
 	 \begin{equation}\label{eq93}
 		\P\big(\disjc^3_{\cone_{\text{scl}}}\big)\leq CL(2\epsilon^{-1})\epsilon^{\,8/15}.
 	\end{equation}
 \end{proposition}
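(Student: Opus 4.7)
The plan is to execute the four-step reduction outlined in the introduction (Figures \ref{fig:5}--\ref{fig:8}), converting the event $\disjc^3_{\cone_{\text{scl}}}$ into a two-point closeness statement for the first two parabolic Airy lines $\ele_1,\ele_2$, and then invoking the main estimate of Section \ref{sec:Alb}. On $\disjc^3_{\cone_{\text{scl}}}$, fix (using Lemma \ref{lm:MB}) three disjoint geodesics $\pi^1<\pi^2<\pi^3$, ordered spatially, with starting points $x_1<x_2<x_3 \in [-\epsilon/2, \epsilon/2]$ and ending points $v_1<v_2<v_3 \in [-L(\epsilon^{-1})/2, L(\epsilon^{-1})/2]$. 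I work with the two consecutive pairs $(\pi^1,\pi^2)$ and $(\pi^2,\pi^3)$; within each pair the two geodesics are disjoint, their bottom endpoints are within $\epsilon$, and their top endpoints are separated by some $\Delta \in (0, L]$ which need not be small.

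To apply Theorem \ref{thm:DZ} I want to turn each pair into a disjoint $2$-tuple with genuinely shared endpoints. For a pair with top gap $\Delta$ I append short sub-paths at both ends: a small time extension below $0$ to a common anchor $(0,-\eta_0)$ joining the two $\epsilon$-close bottom endpoints through a strip of width $\epsilon$ and height $\eta_0$, and a time extension above $1$ of order $\eta_1\sim \Delta^{3/2}$ reaching a common anchor at the midpoint $\bar v = (v_i+v_{i+1})/2$ through a thin trapezoid of width $\Delta$ (matched to KPZ $2$-$3$ scaling so that the pair of appended paths at the top can be laid out disjointly). Lemmas \ref{lm:ub} and \ref{lm:ub4} control the total cost of the four appended sub-paths by $C(\epsilon^{1/2}+\Delta^{1/2})\log^{1/2}(\epsilon^{-1})$ per pair.

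Applying Theorem \ref{thm:DZ} at $k=2$ to the closed pair, after absorbing the small time extensions into parabolic corrections via Lemma \ref{lm:prp} and Lemma \ref{lm:ub4}, bounds the sum of the two appended-path weights by $\ele_1(\bar v)+\ele_2(\bar v)$ up to an error of the same order. Conversely, each individual appended-path weight is at least the weight of the original geodesic, which itself exceeds $\dl(0,0;\bar v,1)=\ele_1(\bar v)$ minus $O((\epsilon^{1/2}+\Delta^{1/2})\log^{1/2}(\epsilon^{-1}))$ by Lemma \ref{lm:ub4} and Theorem \ref{thm:DZ} at $k=1$. Adding the two individual bounds and rearranging gives
\begin{equation*}
\ele_1(\bar v)-\ele_2(\bar v)\;\le\; C\bigl(\epsilon^{1/2}+\Delta^{1/2}\bigr)\log^{1/2}(\epsilon^{-1}).
\end{equation*}
Carrying this out for both consecutive pairs produces two distinct points $\bar v_{12}, \bar v_{23}\in [-L/2,L/2]$ at which $\ele_1-\ele_2$ is small in this quantitative sense.

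A dyadic union bound over $\Delta\in\{2^k\epsilon\}_{k\ge 0}$ (only $O(\log \epsilon^{-1})$ scales, absorbed into the polylog factor) combines with the two-point Airy closeness estimate of Section \ref{sec:Alb} to yield the desired bound $CL(2\epsilon^{-1})\epsilon^{8/15}$. The main obstacle is the closing construction in the regime $\Delta\gg\epsilon$: the appended paths at the top must be placed disjointly inside a thin trapezoid of width $\Delta$ and height $\Delta^{3/2}$, and the H\"older-$1/2$ regularity of $\ash$ from Lemma \ref{lm:ub4} must be leveraged to keep the extra weight bounded by $\Delta^{1/2}\log^{1/2}$. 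Balancing this $\Delta$-dependent error against the two-point Airy estimate from Section \ref{sec:Alb} and optimizing the dyadic union bound is where the exponent $8/15$, rather than the naive $1/2$, emerges.
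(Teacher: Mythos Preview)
Your approach has a genuine gap in the regime where at least one of the top gaps $\Delta_i=v_{i+1}-v_i$ is of order~$1$ (which is certainly possible since the upper base has length $L(\epsilon^{-1})\gg 1$). In that case your bouquet-and-close construction yields only $\ele_1(\bar v)-\ele_2(\bar v)\le C\Delta^{1/2}$, which is \emph{not small}, and the two-point Airy estimate of Section~\ref{sec:Alb} (Proposition~\ref{prop:1}) gives nothing: with $\phi\sim\Delta^{1/2}$ and separation $\sim\Delta$ it returns $\phi^{6}(1+\Delta^{-3/2})\sim\Delta^{3/2}$, which is $O(1)$. No dyadic balancing can repair this, because at scale $\Delta\sim 1$ the event you have produced has probability bounded away from zero. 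Effectively, when one gap is large you are left with only one useful point of Airy closeness, i.e.\ the $\disjc^{2}$ bound, and that sits exactly at the threshold $\epsilon^{1/2}L(\epsilon^{-1})$ you need to beat.

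The paper avoids this entirely, and the step you are missing is Lemma~\ref{lem:dis} (built on the sliding construction of Lemma~\ref{lem:2dj}): the event $\disjc^{3}_{\cone_{\text{scl}}}$ is \emph{equivalent} to the existence of two ordered pairs of disjoint geodesics in $\cone_{\text{scl}}$ whose top endpoints are within $\phi$ of each other, for \emph{any} $\phi>0$. Taking $\phi=\epsilon/2$ forces both bouquet errors to be of order $\epsilon^{1/2}$ regardless of where the original $v_1,v_2,v_3$ sit; this is precisely Step~2/Figure~\ref{fig:6}. With that in hand the paper (Proposition~\ref{prop:3}) splits according to whether the two pairs' top locations are far apart ($>\epsilon^{\beta}$, handled by the two-point Airy bound of Proposition~\ref{prop:1}) or close ($\le\epsilon^{\beta}$, handled by the three-close-geodesics estimate of Lemma~\ref{lm:13}, which imports \cite[Theorem~1.16]{BGH22}). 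The exponent $8/15$ then emerges from the specific choice $\beta=2/9$, $\delta=1/90$ in this splitting, not from a $\Delta$-balancing argument.
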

 The rest of the paper is devoted to proving Proposition \ref{prop:2}, a task we conclude in Section \ref{sec:ubt}.    
	\begin{proof}[Proof of Theorem \ref{thm:4}]
	From Theorem \ref{thm:3} (i) it is enough to  look at semi-infinite geodesics. Define the event
\begin{equation}
		\begin{aligned}
		\nThg:=&\big \{\forall\xi\in\R, \bm{p}=\{p^1,p^2,p^3\}\subset \R^2: \small\text{$\brnc{\pi_{p^1}^{\xi,1},\pi_{p^2}^{\xi,2},\pi_{p^3}^{\xi,3}}=\infty$ for any three geodesics }\\
		&\text{$\{\pi_{p^1}^{\xi,1},\pi_{p^2}^{\xi,2},\pi_{p^3}^{\xi,3}\}$ leaving from $\bm{p}$ in direction $\xi$}\big \}.
	\end{aligned}
\end{equation}
		On the event $\nThg$, at least two out of any three distinct geodesics starting from  points $p^1,p^2,p^3\in\R^2$ in direction $\xi\in \R$ will coalesce. Thus, the proof of Theorem \ref{thm:4} boils down to showing that 
	\begin{equation}\label{eq118}
		\P(\nThg)=1.
	\end{equation}
	Plugging \eqref{eq93} on the right hand side of \eqref{eq121}
	\begin{equation}\label{eq141}
		\P\big(\Thg^{\cdir}\big)\leq \limsup_{M\rightarrow \infty}\limsup_{\epsilon\rightarrow 0}C\big(\cdir\Mbox\big)^{1/2}[L(2\epsilon^{-1})]\epsilon^{\,1/30}=0.
	\end{equation}
	  Next we define the event
	  	\begin{equation}
	  		\begin{aligned}
	  		\Thg^\infty:=&\{\exists\xi\in\R, \bm{p}=(p^1,p^2,p^3)\in\smbox^3: \small\text{   there exist three geodesics $\{\pi_{p^1}^{\xi,1},\pi_{p^2}^{\xi,2},\pi_{p^3}^{\xi,3}\}$}\\
	  		&\text{leaving from $\bm{p}$ in direction $\xi$ s.t\ $\brnc{\pi_{p^1}^{\xi,1},\pi_{p^2}^{\xi,2},\pi_{p^3}^{\xi,3}}<\infty$}\}.
	  	\end{aligned}
	  	\end{equation}
	  The event $\Thg^\infty$ is similar to $\Thg^{\cdir}$ but with no restriction on the set of  directions. As $\cdir$ can be chosen arbitrarily large we conclude from $\eqref{eq141}$ that 
	  \begin{equation}
	  	\P(\Thg^\infty)=0.
	  \end{equation}
 Next we would like to extend the result for geodesics starting from $\bm{p}\in(\R^2)^3$.  For $\alpha\geq 1$, define the set
	\begin{equation}
		\smbox_\alpha:=\{u=(u_1,u_2):-\alpha/2\leq u_2\leq \alpha/2,-\alpha/2\leq u_1\leq \alpha/2\}.
	\end{equation}
	Thus, the set $\smbox_\alpha\subseteq\R^2$ is simply a translated and scaled version of the set $\smbox$ defined earlier in \eqref{eq140}. Consider the event
	\begin{align}
		\Thg^\infty_\alpha:=&\{\exists\xi\in\R, \bm{p}=(p^1,p^2,p^3)\in\smbox_\alpha^3: \small\text{   there exist three geodesics $\{\pi_{p^1}^{\xi,1},\pi_{p^2}^{\xi,2},\pi_{p^3}^{\xi,3}\}$}\\
		&\text{leaving from $\bm{p}$ in direction $\xi$ s.t\ $\brnc{\pi_{p^1}^{\xi,1},\pi_{p^2}^{\xi,2},\pi_{p^3}^{\xi,3}}<\infty$}\},
	\end{align}
	and note that from the stationarity and scale invariance of the DL it follows that
	\begin{equation}\label{eq117}
		\P(\Thg^\infty_\alpha)=0.
	\end{equation}
		 Clearly,
		\begin{equation}
			\nThg^c\subseteq \bigcup_{\alpha\in\Z_+} \Thg^\infty_\alpha.
		\end{equation}
		Taking probability on both sides of the last display  and using a union bound along with \eqref{eq117} gives \eqref{eq118}. 
	\end{proof}
	\begin{remark}
		The use of stationarity and scale invariance of the DL is not crucial in obtaining \eqref{eq117}. Indeed, one can take $\smbox$ in \eqref{eq140} to be arbitrarily large. 
	\end{remark}
		\begin{proof}[proof of Theorem \ref{thm:2}]
		From Corollary \ref{thm:5} and Theorem \ref{thm:3} we see that the following holds with probability one: for any $p\in\R^2$ and $\xi\in\R$, if $\eta$ is an infinite $\xi$-directed geodesic starting  from $p$, then $\eta$ must coalesce with either $\pi_p^{\xi-,L}$ or $\pi_p^{\xi+,R}$ (or both if $\xi\notin \Xi$). But from the definition of Busemann geodesics $\eta$ must be a Busemann geodesic.
	\end{proof}
\begin{remark}
	In contrast to \eqref{eq93},  the result in \cite{busa-ferr-20} suggests that the probability of two disjoint geodesics in $\cone_{\text{scl}}$ is 
	\begin{equation}
		\P\Big(\disjc^2_{\cone_{\text{scl}}}\Big)\sim C\epsilon^{1/2}L(\epsilon^{-1}).
	\end{equation}
	which shows where the argument would break down if one were to show that there exists no two infinite geodesics that do not meet at infinity using an argument similar to \eqref{eq121}. 
\end{remark}
 \section{From $k$ disjoint geodesics to $k-1$ pairs of disjoint geodesics with close endpoints}\label{sec:trp}
 In what follows we use the following notation. For $(p,q)\in\R^4_\uparrow$ we let $\pi_p^q$ be any geodesic from $p$ to $q$. When there is no room for confusion as to the time horizons we shall often omit them from the geodesic notation e.g.\ for $p=(x,0)$ and $q=(y,1)$ we write $\pi_x^y:=\pi_p^q$.   For $a<b$ reals we define
 \begin{equation}
 	[a,b]^{\mathbb{Q}}=[a,b]\cap \mathbb{Q}.
 \end{equation}
In order not to have to refer to left and right geodesics between endpoints, we introduce the event 
\begin{equation}
	\Omq=\{\text{For any $(x,s,y,t)\in\Q^4\cap\R^4_{\uparrow}$ it holds that $\pi_{(x,s)}^{(y,t)}\in\uni$}\}.
\end{equation}
We remark  that from \eqref{eq122} it holds that  $\P(\Omq)=1$. Let us define a partial order relation on the set of geodesics. Consider two geodesics $\{\pi(r)\}_{r\in[s_1,s_2]}$ and $\{\eta(r)\}_{r\in[t_1,t_2]}$ we write
\begin{equation}
	\pi \leq \eta \iff \pi(r)\leq \eta(r) \quad \forall r\in [s_1\vee t_1,s_2\wedge t_2].
\end{equation}
We say the $k$-pairs of geodesics $\{(\pi^{1,i},\pi^{2,i})\}_{i\in [k]}$ are \textbf{ordered} if 
\begin{equation}
	\begin{aligned}
			&\pi^{j,i}\leq \pi^{k,i+1}, \qquad j,k\in\{1,2\},i\in[k-1]\\
			&\pi^{1,i}\leq \pi^{2,i}, \qquad i\in[k].
	\end{aligned}
\end{equation}
i.e.\ $\pi^{2,i}$ lies to the right of $\pi^{1,i}$ and both $\pi^{1,i+1}$ and $\pi^{2,i+1}$ lie to the right of $\pi^{1,i}$ and $\pi^{2,i}$.
Let $\cone$ be a trapezoid and let $\phi>0$. We define
\begin{equation}\label{eq123}
	\begin{aligned}
		\Mdisjc_{\cone}^{\phi}:=&\sup\{k:\text{There exist  $k$ ordered pairs of disjoint geodesics}\\\text{ $\{(\pi^i,\eta^i)\}_{i\in [k]}\subseteq \uni^2$} &\text{ in $\cone$  such that $|\pi^i(\cone^{up})-\eta^i(\cone^{up})|<\phi$}\},
	\end{aligned}
\end{equation}
where for $\coneu=[a,b]\times\{t\}$ we denote $\pi(\coneu)=\pi(t)$. In words, the random variable $	\Mdisjc_{\cone}^{\phi}$ is the maximal number of ordered, disjoint pairs of geodesics in $\cone$, such that the endpoints of  each pair on the set $\coneu$ is at most $\phi$ from each other. We remark that  the word 'disjoint' here refers to each pair of geodesics and not the relation between the different pairs i.e.\ with the notation in \eqref{eq123} it is always the case that $\pi^i$ and $\eta^i$ are disjoint but it is possible that $\eta^i\cap\pi^{i+1}\neq \emptyset$. Our next result relates the random variable $\Mdisjc_{\cone}^{\phi}$ with the event of two disjoint geodesics in $\cone$.
\begin{lemma}\label{lem:2dj}
	With probability one, for any trapezoid  $\cone$ and $\phi>0$
	\begin{equation}
		\disjc^2_{\cone}=\{\Mdisjc_{\cone}^{\phi}\geq 1\}
	\end{equation}
\end{lemma}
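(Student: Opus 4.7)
The inclusion $\{\Mdisjc_{\cone}^{\phi}\geq 1\}\subseteq\disjc^2_{\cone}$ is immediate from the definitions: any ordered pair $(\pi,\eta)\in\uni^2$ of disjoint geodesics in $\cone$ with $|\pi(\cone^{up})-\eta(\cone^{up})|<\phi$ provides two disjoint geodesics in $\uni$ in $\cone$.

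For the reverse inclusion, suppose $\pi^1<\pi^2\in\uni$ are disjoint geodesics in $\cone$. Write $y_i=\pi^i(\cone^{up})$ and $x_i=\pi^i(\cone^{dn})$ for $i=1,2$. If $|y_2-y_1|<\phi$, then $(\pi^1,\pi^2)$ itself witnesses $\Mdisjc_{\cone}^{\phi}\geq 1$, so we may assume $y_2-y_1\geq \phi$. Work on the almost sure event obtained by intersecting $\Omq$ with the full-probability events of Lemmas \ref{lm:MB} and \ref{lm:14}. Since $\pi^1$ and $\pi^2$ are strictly disjoint continuous paths on the compact interval $[s,t]$, we have $\delta_0:=\inf_{r\in[s,t]}(\pi^2(r)-\pi^1(r))>0$.

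The plan is to exhibit a rational pair $(w^*,z^*)\in\mathbb{Q}^2$ with $w^*\in(x_1,x_2)$ and $z^*\in(y_1,y_1+\phi/2)$ for which the unique geodesic $\gamma_{w^*,z^*}$ (well-defined and in $\uni$ on $\Omq$) is strictly disjoint from $\pi^1$. Granting this, the pair $(\pi^1,\gamma_{w^*,z^*})$ meets all requirements: both geodesics lie in $\uni$; they are ordered ($\pi^1\leq\gamma_{w^*,z^*}$ by the ordering of geodesics, since $w^*>x_1$ and $z^*>y_1$); both lie in $\cone$ (as $w^*\in\cone^{dn}$ and $z^*\in\cone^{up}$); they are disjoint; and their upper endpoints satisfy $|z^*-y_1|<\phi/2<\phi$, hence $\Mdisjc_{\cone}^{\phi}\geq 1$.

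To find $(w^*,z^*)$: fix $z^*\in(y_1,y_1+\phi/2)\cap\mathbb{Q}$ and vary $w$ over $\mathbb{Q}\cap(x_1,x_2)$. By the ordering of geodesics, $\pi^1\leq\gamma_{w,z^*}\leq\pi^2$ throughout $[s,t]$. For $w$ close to $x_2$, Lemma \ref{lm:14} implies that $\gamma_{w,z^*}$ is close (in the overlap sense) to a geodesic sharing an initial segment with $\pi^2$, and so $\gamma_{w,z^*}$ is separated from $\pi^1$ by roughly $\delta_0$ near time $s$. By Lemma \ref{lm:MB}, if $\gamma_{w,z^*}$ touches $\pi^1$ at any point of $(s,t)$, they coincide on a maximal interval of positive length; the strict separation $\delta_0>0$, combined with the ordering and a careful intermediate-value type argument as $w$ decreases from $x_2$, shows that for rational $w$ sufficiently close to (but less than) $x_2$, $\gamma_{w,z^*}$ in fact never touches $\pi^1$.

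The main obstacle is making this final step rigorous: one must rule out the scenario in which $\gamma_{w,z^*}$, in descending from near $\pi^2(s)$ to $(z^*,t)$ near $\pi^1(t)$, coincides with $\pi^1$ on an intermediate interval. This is where the combination of monotonicity of $\gamma_{w,z^*}$ in $w$, the overlap convergence of Lemma \ref{lm:14}, the no-bubble property of Lemma \ref{lm:MB}, and the strict positivity of $\delta_0$ must be invoked together.
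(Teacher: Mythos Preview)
Your easy inclusion is fine, and your reduction to the case $y_2-y_1\ge\phi$ is fine. The hard direction, however, is not complete --- and you say so yourself in the final paragraph. The difficulty you flag is real, not a technicality.

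Concretely: with $z^*$ fixed near $y_1$ and $w$ near $x_2$, the geodesic $\gamma_{w,z^*}$ starts near $\pi^2(s)$ but ends near $\pi^1(t)$. Ordering gives $\pi^1\le\gamma_{w,z^*}\le\pi^2$, but nothing prevents $\gamma_{w,z^*}$ from touching $\pi^1$ on an interior interval. Your appeal to Lemma~\ref{lm:14} does not help: as $w\to x_2$ the overlap lemma compares $\gamma_{w,z^*}$ with a geodesic from $(x_2,s)$ to $(z^*,t)$, not with $\pi^2$, and there is no reason the former is disjoint from $\pi^1$ (nor that it is unique, since $x_2$ need not be rational). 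So you get separation of $\gamma_{w,z^*}$ from $\pi^1$ only near time $s$, which is exactly the obstacle you identify. The monotonicity-in-$w$, no-bubble, and $\delta_0>0$ ingredients you list do not combine into an argument that rules out interior contact.

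The paper's proof avoids this by varying the \emph{upper} endpoint while keeping the lower endpoint at $a_2$. It defines the critical value
\[
b=\inf\bigl\{y'\in[b_1,b_2]^{\Q}:\ \pi_{a_1}^{b_1}\cap\pi_{a_2}^{y'}=\emptyset\bigr\},
\]
shows $b_1<b<b_2$ via Lemma~\ref{lm:14}, and then picks rationals $u_l<b<u_r$ with $u_r-u_l<\phi$. By construction $\pi_{a_2}^{u_r}$ is disjoint from $\pi_{a_1}^{b_1}$ while $\pi_{a_2}^{u_l}$ touches it at some time $t$. The key trick is then to switch the left geodesic's starting point: since $\pi_{a_2}^{u_l}(t)=\pi_{a_1}^{b_1}(t)$, the unique geodesic $\pi_{a_1}^{u_l}$ coincides with $\pi_{a_1}^{b_1}$ on $[0,t]$ and with $\pi_{a_2}^{u_l}$ on $[t,1]$, and one checks that $\pi_{a_1}^{u_l}<\pi_{a_2}^{u_r}$ on all of $[0,1]$. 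This ``critical endpoint plus switch of starting point'' is the idea your sketch is missing.
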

\begin{proof}
	 Clearly, $\disjc^2_{\cone}\supseteq\{\Mdisjc_{\cone}^{\phi}\geq 1\}$, so we only prove the reversed containment. As the proof is geometric in nature, we suggest consulting Figure \ref{fig:2}.  For simplicity, we assume
		\begin{equation}
		\begin{aligned}
			\coneu&=[b'_1,b'_2]\times \{1\}\\
			\coned&=[a'_1,a'_2]\times \{0\},
		\end{aligned}
	\end{equation}
	 It will be clear from the proof below that there is no loss in generality. As the time levels of the $\coned$ and $\coneu$ are fixed to be $0$ and $1$ respectively, to ease the notation of the proof, locally we abbreviate $\pi_{a}^{b}=\pi_{(a,0)}^{(b,1)}$ whenever $(a,0)\in\coned,(b,1)\in\coneu$. Suppose $\disjc^2_{\cone}$ holds, then by definition there exist $a'_1 \leq a_1\leq a_2\leq a'_2$  and $b'_1\leq b_1\leq b_2\leq b'_2$, so that $\pi_{a_1}^{b_1},\pi_{a_2}^{b_2}\in\uni$ are disjoint in $\cone$.  Consider the smaller trapezoid $\hat{\cone}$
	\begin{equation}
		\begin{aligned}
			\hat{\cone}^{up}&=[b_1,b_2]\times \{1\}\\
			\hat{\cone}^{dn}&=[a_1,a_2]\times \{0\}.
		\end{aligned}
	\end{equation}
   Define $u$ as the leftmost point in $[b_1,b_2]\times \{1\}$ such that the geodesics from $(a_2,0)$ terminating at $u$ do not intersect    $\pi_{a_1}^{b_1}$ i.e.\ $u=(b,1)$ where 
 \begin{equation}
 	b:=\inf\big\{y'\in[b_1,b_2]^\mathbb{Q}:\pi_{(a_1,0)}^{(b_1,1)}\cap\pi_{(a_2,0)}^{(y',1)}=\emptyset\big\}.
 \end{equation}
  We first claim that we may assume that 
 \begin{equation}\label{eq139}
 	b_1<b<b_2.
 \end{equation}
 Suppose $b=b_1$, then there exists a sequence of points $c_n\in\Q$ such that $c_n\rightarrow b_1$ and $\pi_{a_2}^{c_n}\cap\pi_{a_1}^{b_1}=\emptyset$. Taking $n_0\in\N$ such that $c_{n_0}-b_1<\phi$ implies the event $\{\Mdisjc_{\cone}^{\phi}\geq 1\}$ which concludes the proof. To complete \eqref{eq139}, we now claim that it must be that $b<b_2$. Assume, by way of contradiction, that $b=b_2$, then there exists a sequence $d_n\in\Q$ such that $d_n\rightarrow b_2$ and $\pi_{a_2}^{d_n}\cap\pi_{a_1}^{b_1}\neq\emptyset$. As $\pi_{a_2}^{b_2}\in\uni$, it follows  from  Lemma \ref{lm:14} that $\pi_{a_2}^{d_n}\rightarrow \pi_{a_2}^{b_2}$ in the overlap sense, which leads to a contradiction. \\
We now pick $u_l\in[b-\delta,b)^{\Q}$  and $u_r\in(b,b+\delta]^{\Q}$  where
\begin{equation}
	\delta:=\tfrac12\min\{\phi,b_2-b,b-b_1\},
\end{equation}
By definition of $u_l$ and $u_r$
\begin{equation}\label{eq6}
	\pi_{a_1}^{b_1}\cap \pi_{a_2}^{u_r}=\emptyset \quad \text{ and } \quad \pi_{a_1}^{b_1}\cap \pi_{a_2}^{u_l}\neq\emptyset.
\end{equation}
This implies that there exists $0\leq s<t<1$ such that (see Figure \ref{fig:2})
\begin{align}					 \pi_{a_1}^{b_1}(r)<&\pi_{a_2}^{u_l}(r)= \pi_{a_2}^{u_r}(r) \quad  0\leq r\leq s \label{eq3}\\
		\pi_{a_1}^{b_1}(r)\leq& \pi_{a_2}^{u_l}(r)<\pi_{a_2}^{u_r}(r) \quad  s<r\leq 1\label{eq4}\\
		\pi_{a_1}^{b_1}(t)&=\pi_{a_2}^{u_l}(t)\label{eq5}
\end{align}
By order of geodesics and the fact that all geodesics in question are  unique it follows that 
	\begin{align}
	\eqref{eq6}+\eqref{eq5} \implies	&\pi_{a_1}^{b_1}(r)=\pi_{a_1}^{u_l}(r)< \pi_{a_2}^{u_r}(r),\quad  0\leq r\leq t\\
		\eqref{eq4}\implies &\pi_{a_1}^{b_1}(r)\leq \pi_{a_1}^{u_l}(r)<  \pi_{a_2}^{u_r}(r) \quad  t\leq r\leq 1.
	\end{align}
which implies $	\pi_{a_1}^{u_l}\cap 	\pi_{a_2}^{u_r}=\emptyset$. By our choice of $u_l$ and $u_r$ it holds that  $u_r-u_l< \phi$. This concludes the result.
\end{proof}
We can now generalize \ref{lem:2dj} to when  the number of pairs is greater than $1$. In this paper we will use the result only for $k=3$, but we provide the proof for general $k\geq 3$ for any need of such  reference in the future.  

\begin{lemma}\label{lem:dis}
	With probability one, for any trapezoid  $\cone$, $\phi>0$ and integer $k\geq 2$
	\begin{equation}
		\disjc^k_{\cone}=\{\Mdisjc_{\cone}^{\phi}\geq k-1\}.
	\end{equation}
\end{lemma}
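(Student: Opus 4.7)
The plan is to establish both inclusions directly; the case $k=2$ is exactly Lemma \ref{lem:2dj}.

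For the easy inclusion $\{\Mdisjc_{\cone}^{\phi}\ge k-1\}\subseteq\disjc_{\cone}^{k}$, given ordered pairs $\{(\pi^{i},\eta^{i})\}_{i\in[k-1]}\subseteq\uni^{2}$ of disjoint geodesics in $\cone$, I will show that the $k$ geodesics $\pi^{1},\pi^{2},\dots,\pi^{k-1},\eta^{k-1}$ are pairwise disjoint and therefore witness $\disjc_{\cone}^{k}$. The argument is a sandwich: for $i<j\le k-1$, the chain $\pi^{i}\le\eta^{i}\le\pi^{i+1}\le\cdots\le\pi^{j}$ forces any coincidence $\pi^{i}(r)=\pi^{j}(r)$ to yield $\pi^{i}(r)=\eta^{i}(r)$, contradicting disjointness of the pair $(\pi^{i},\eta^{i})$. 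The comparison of $\pi^{i}$ with $\eta^{k-1}$ for $i<k-1$ is identical, and $(\pi^{k-1},\eta^{k-1})$ is disjoint by assumption.

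For the harder inclusion $\disjc_{\cone}^{k}\subseteq\{\Mdisjc_{\cone}^{\phi}\ge k-1\}$, I would start from $k$ ordered disjoint geodesics $\pi^{1},\dots,\pi^{k}\in\uni$ in $\cone$ with endpoints $(a_{i},s)\in\coned$ and $(b_{i},t)\in\coneu$, and apply the construction from the proof of Lemma \ref{lem:2dj} to each consecutive pair $(\pi^{i},\pi^{i+1})$, viewed inside the sub-trapezoid with lower base $[a_{i},a_{i+1}]\times\{s\}$ and upper base $[b_{i},b_{i+1}]\times\{t\}$. For each $i\in[k-1]$ this produces a disjoint pair $(\pi_{\ast}^{i},\eta_{\ast}^{i})\in\uni^{2}$ with bottom endpoints $(a_{i},s),(a_{i+1},s)$ and rational top endpoints $(u_{l}^{i},t),(u_{r}^{i},t)$ satisfying $b_{i}<u_{l}^{i}<u_{r}^{i}<b_{i+1}$ strictly and $u_{r}^{i}-u_{l}^{i}<\phi$. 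The strict interior placement at the top is guaranteed by the choice of $\delta$ in the proof of Lemma \ref{lem:2dj}, which bounds $\delta$ by both $b_{i+1}-b^{i}$ and $b^{i}-b_{i}$.

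The main technical step, which I expect to be the principal obstacle, is to verify that the $k-1$ resulting pairs are ordered in the sense of \eqref{eq123}. The inputs are the strict separation at the top $u_{r}^{i}<b_{i+1}<u_{l}^{i+1}$ and the (possibly shared) ordering at the bottom $a_{i}\le a_{i+1}$, combined with the planar monotonicity of DL geodesics: two unique geodesics whose endpoints are weakly ordered at both terminal time levels must be weakly ordered throughout their common time interval. Together these yield $\pi_{\ast}^{i},\eta_{\ast}^{i}\le\pi_{\ast}^{i+1},\eta_{\ast}^{i+1}$. The shared bottom point $a_{i+1}$ between consecutive pairs only gives weak ordering, but this is precisely what \eqref{eq123} permits, since the disjointness requirement there applies only within a single pair.
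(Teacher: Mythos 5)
Your proposal follows exactly the paper's route: the forward inclusion by taking $\pi^{1},\dots,\pi^{k-1},\eta^{k-1}$ and using the sandwich given by the ordering condition, and the reverse inclusion by carving $\cone$ into the $k-1$ consecutive sub-trapezoids determined by the endpoints of the $k$ disjoint geodesics and invoking Lemma \ref{lem:2dj} in each. The one thing you do that the paper elides is spell out why the resulting $k-1$ pairs are ordered; the paper simply asserts this, and your explicit check (that the shared bottom endpoint plus the separated top endpoints force weak ordering via planar monotonicity of unique geodesics) is a sound way to fill that small gap, modulo the minor inaccuracy that in the edge case of Lemma \ref{lem:2dj} one can have $u_{l}^{i}=b_{i}$ rather than a strict inequality — but weak ordering is all the definition in \eqref{eq123} requires, so the conclusion is unaffected.
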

\begin{proof}
	For $k=2$ the result follows from Lemma \ref{lem:2dj}, so we assume $k\geq 3$.\vspace{5pt}\\
	$\disjc^k_{\cone}\supseteq \{\Mdisjc_{\cone}^{\phi}\geq k-1\}$: Let $\{(\pi^i,\eta^i)\}_{i\in [k-1]}$ be $k-1$ ordered pairs of disjoint geodesics in $\uni$. Then, the $k$ geodesics in the  set   
	\begin{equation}
		\{\pi^{i}\}_{i\in [k-1]}\cup \{\eta^{k-1}\}\subseteq \uni,
	\end{equation}
	  can be verified to be disjoint.\vspace{5pt}\\
	    $\disjc^k_{\cone}\subseteq \{\Mdisjc_{\cone}^{\phi}\geq k-1\}$: Without loss of generality, we assume 
	    \begin{equation}
	    	\begin{aligned}
	    		\cone^{up}&=[b_0,b_{k+1}]\times \{1\}\\
	    		\cone^{dn}&=[a_0,a_{k+1}]\times \{0\}.
	    	\end{aligned}
	    \end{equation}
	Let $a_0<a_1<\ldots <a_k<a_{k+1}$ and $b_0<b_1<\ldots <b_k<b_{k+1}$ be reals such that 
	\begin{equation}
		\{\pi_{a_i}^{b^i}\}_{i\in[k]}\subseteq\uni \text{ are mutually disjoint}.
	\end{equation} 
 
Consider the following $k-1$ trapezoids   
	\begin{equation}
		\cone_i=\cone^{up}_i\cup\cone^{dn}_i\qquad i\in[k-1],
	\end{equation}
where
	\begin{equation}
	\begin{aligned}
		\cone^{up}_i&=[b_i,b_{i+1}]\times \{1\}\\
		\cone^{dn}_i&=[a_i,a_{i+1}]\times \{0\}.
	\end{aligned}
\end{equation}
We now use Lemma \ref{lem:2dj} to find $k-1$ ordered pairs of disjoint geodesics, a pair for each trapezoid $\cone_i$. Precisely, for each $i\in[k-1]$, we use Lemma \ref{lem:2dj} with $\cone_i$ and the geodesics $\pi_{a_i}^{b_i}$ and $\pi_{a_{i+1}}^{b_{i+1}}$ to find two disjoint geodesics $\pi^i$ and $\eta^i$ such that $\pi^i(0),\eta^i(0)\in \cone^{dn}_i$, $\pi^i(1),\eta^i(1)\in \cone^{up}_i$ and $|\pi^i(1)-\eta^i(1)|<\phi$. This concludes the proof.
\end{proof}
\begin{figure}[t]	
	\centering
	\begin{tikzpicture}[scale=0.6, every node/.style={scale=0.8}]
		\draw[thick] (-3,0) -- (3,0);
		\draw[thick] (-7,7) -- (7,7);
		\draw[dashed,black] (3,0) -- (7,7);
		\draw[dashed,black] (-3,0) -- (-7,7);
		\draw[thick,black] (0,7) -- (0,6.8);
		
		\node [scale=1][] at (-3,-0.2) {$a_1$};	
		
		\node [scale=1][] at (3,-0.2) {$a_2$};
		
		\node [scale=1][] at (-7,7.3) {$b_1$};	
		
		\node [scale=1][] at (7,7.3) {$b_2$};
		
		\draw[thick,black] (0,7) -- (0,6.8);
		\node [scale=1][] at (0,7.3) {$u$};	
		
		\draw[thick,black] (-1,7) -- (-1,6.8);
		\node [scale=1][] at (-1,7.3) {$u_l$};
		
		\draw[thick,black] (1,7) -- (1,6.8);
		\node [scale=1][] at (1,7.3) {$u_r$};
		
		\draw[thick,dashed,blue] (-3,0) -- (-5.7,4.7)--(-1,7);
		\draw[thick,dashed,red] (3,0) -- (5,3.5)--(1,7);
		\draw[thick,dashed,olive] (3,0)--(3.5,7/8)--(-4.7,3)--(-5.2,3.8)--(-1,7);
		
		\node [scale=1][] at (-6.5,5) {$\pi_{a_1}^{b_1}$};
		\node [scale=1][] at (6.5,5) {$\pi_{a_2}^{b_2}$};
		
		\node [scale=1,red][] at (5,2.5) {$\pi_{a_2}^{u_r}$};
		
		\node [scale=1,blue][] at (-5,2.5) {$\pi_{a_2}^{u_l}$};
		
		\node [scale=1,olive][] at (1.3,2.5) {$\pi_{a_2}^{u_l}$};
	\end{tikzpicture}

	\caption{Given the two disjoint geodesics $\pi_{a_1}^{b_1}$ and $\pi_{a_2}^{b_2}$, we construct two geodesics, $\pi_{a_1}^{u_l}$ and $\pi_{a_2}^{u_r}$, whose upper endpoints are $\phi$ close to one another.}
	\label{fig:2}
\end{figure}
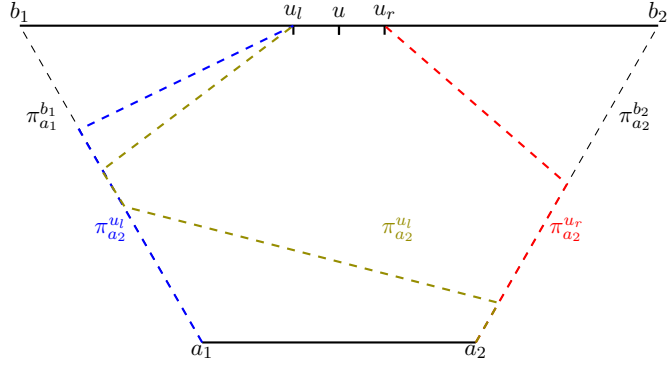
\section{From geodesics in a trapezoid to line ensembles}\label{sec:1}
In this section we describe the main ideas behind the upper bound on $\P(\Mdisjc_{\cone_{\text{scl}}}^{\epsilon}\geq 2)$.  These ideas originated in the work of Hammond in \cite{Hammond1,Hammond2}, where the author developed methods to relate the existence of $k\in\N$ disjoint geodesics in Brownian LPP and the behavior of Brownian Gibbs ensembles related to that model. These ideas were then introduced into the setup of the directed landscape through the work of Dauvergne and Zhang in \cite{Dauvergne-Zhang-2021} and used in \cite{Dauvergne23} to study the different networks of geodesics in the directed landscape. We shall refer to these suite of techniques as the HDZ technique. Thus, the main idea behind the  HDZ technique is to study  disjoint geodesics in random geometries in the KPZ class via the study of events of line-closeness in line ensembles.
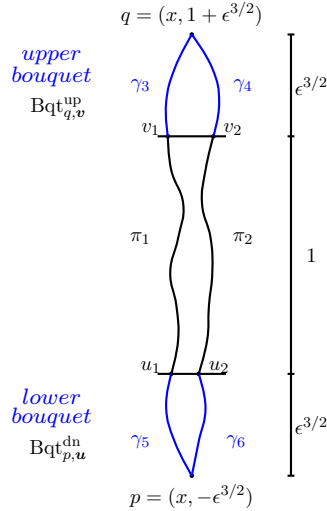
\begin{figure}[t]	
	\centering
	\begin{tikzpicture}[scale=0.45, every node/.style={scale=0.7}]
		\draw[thick] (-1,0) -- (1,0);
		\draw[thick] (-1,7) -- (1,7);
		
		\draw[fill] (0,10) circle (.3ex);
		\node [scale=1][] at (0,10.6)
		{$q=(x,1+\epsilon^{3/2})$};
		
		\node [blue,scale=1][] at (-1.5,8.5)
		{$\gamma_3$};
		\node [blue,scale=1][] at (1.5,8.5)
		{$\gamma_4$};
		
		\node [blue,scale=1][] at (-1.5,-2)
		{$\gamma_5$};
		\node [blue,scale=1][] at (1.3,-2)
		{$\gamma_6$};
		
		\node [scale=1][] at (-1.5,4)
		{$\pi_1$};
		\node [scale=1][] at (1.5,4)
		{$\pi_2$};
		
		\draw[fill] (0,-3) circle (.3ex);
		\node [scale=1][] at (0,-3.6)
		{$p=(x,-\epsilon^{3/2})$};
		
		\draw[fill] (0.63,7) circle (.3ex);
		\node [scale=1][] at (1.2,7.2)
		{$v_2$};
		\draw[fill] (-0.7,7) circle (.3ex);
		\node [scale=1][] at (-1.2,7.2)
		{$v_1$};
		
		\draw[fill] (0.2,0) circle (.3ex);
		\node [scale=1][] at (0.8,0.2)
		{$u_2$};
		\draw[fill] (-0.6,0) circle (.3ex);
		\node [scale=1][] at (-1.2,0.2)
		{$u_1$};
		
		\draw [thick] plot [smooth, tension=0.8] coordinates { (0.2,0) (0.45,1) (0.5,2) (0.6,3) (0.55,4) (0.3,5) (0.4,6) (0.63,7)};
		\draw [thick] plot [smooth, tension=0.8] coordinates { (-0.6,0) (-0.4,1) (-0.45,2) (-0.63,3) (-0.4,4) (-0.25,5) (-0.6,6) (-0.7,7)};
		
		\draw [blue, thick] plot [smooth, tension=0.8] coordinates { (-0.7,7) (-0.75,8) (-0.5,9) (0,10)};	
		\draw [blue, thick] plot [smooth, tension=0.8] coordinates { (0.63,7) (0.8,8) (0.6,9) (0,10)};
		
		\draw [blue, thick] plot [smooth, tension=0.8] coordinates { (-0.6,0) (-0.75,-1) (-0.5,-2) (0,-3)};	
		\draw [blue, thick] plot [smooth, tension=0.8] coordinates { (0.2,0) (0.4,-1) (0.2,-2) (0,-3)};
		
		\begin{scope}[shift={(0.9,0)}]
			\draw[thick] (2,0) -- (2,-3);
			\draw[thick] (1.9,0) -- (2.1,0);		
			\draw[thick] (1.9,-3) -- (2.1,-3);
			
			\node [scale=1][] at (2.6,-1.5)
			{$\epsilon^{3/2}$};
			
			\draw[thick] (2,7) -- (2,10);
			\draw[thick] (1.9,7) -- (2.1,7);		
			\draw[thick] (1.9,10) -- (2.1,10);
			\node [scale=1][] at (2.6,8.5)
			{$\epsilon^{3/2}$};
			
			\draw[thick] (2,7) -- (2,0);
			\node [scale=1][] at (2.6,3.5)
			{$1$};
		\end{scope}
		\begin{scope}[shift={(- 0.4,0)}]
			\node [scale=1.5][] at (-3.7,9)
			{$\textcolor{blue}{\substack{upper\\ bouquet}}$};
			\node [scale=1][] at (-3.5,7.8)
			{$\bqtu_{q,\bm{v}}$};
			
			\node [scale=1.5][] at (-3.7,-1)
			{$\textcolor{blue}{\substack{lower\\ bouquet}}$};		
			\node [scale=1][] at (-3.5,-2.2)
			{$\bqtd_{p,\bm{u}}$};
		\end{scope}
	\end{tikzpicture}
	\caption{The Bouquet construction.}\label{fig:4}
\end{figure}
We shall now outline the general ideas behind the HDZ. Consider the trapezoid $\cone_\epsilon=\trpz\big((0,0),\epsilon;(0,1),\epsilon\big)$. Suppose we would like to find an upper bound for  $\P\big(\disjc^2_{\cone_\epsilon}\big)$, i.e.\ we would like to bound the probability of two  disjoint geodesics of life time $1$ whose endpoints are within $\epsilon$ close to one another.\vspace{7pt}\\
{ \textit{HDZ technique}}
\begin{enumerate}\label{HDZ}
	\item \label{it1} On the event $\disjc^2_{\cone_\epsilon}$, it is possible (although generally non-trivial) to find two disjoint paths $\gamma^1$ and $\gamma^2$ (not necessarily geodesics) from $p=(0,-\epsilon^{3/2})$ to $q=(0,1+\epsilon^{3/2})$, such that 
	\begin{equation}\label{eq7}
		2\dl(\pi_p^q)-\big[\dl(\gamma^1)+\dl(\gamma^2)\big]= O\big(\epsilon^{1/2}\big).
	\end{equation} 
\item \label{it:gt}From Theorem \ref{thm:DZ}, which is a version of  Greene's Theorem for the DL (see also \cite{Greene74}), 
	\begin{equation}
		\begin{aligned}
			\dl(\pi_p^q)&=\ele_1(0)\\
			\dl(\gamma^1)+\dl(\gamma^2)&\leq \ele_1(0)+\ele_2(0),
		\end{aligned}
	\end{equation} 
	where $\ele_i$ is the $i$'th line in the parabolic Airy line ensemble. The last display along with \eqref{eq7} imply that 
\begin{equation}
	2\dl(\pi_p^q)-\big[\dl(\gamma^1)+\dl(\gamma^2)\big]= O\big(\epsilon^{1/2}\big)\implies \ele_1(0)-\ele_2(0)=O(\epsilon^{1/2}).
\end{equation} 
\item \label{it:lc}Using the Brownian Gibbs property of the Airy line ensemble \cite{CorwinHammond,Hammond1,dauvergne2023wiener},  one can obtain a bound on the closeness of the first two lines at $x=0$ 
\begin{equation}\label{eq26}
	\P\Big(\ele_1(0)-\ele_2(0)=O(\epsilon^{1/2})\Big)\leq  C\epsilon^{\frac32}.
\end{equation}
\end{enumerate} 
 	Next we explain how we implement the three different steps above in this paper. 
 	
     Step \ref{it1}:   The event $\disjc^2_{\cone_\epsilon}$ implies the existence of two disjoint geodesics $\pi^1$ and $\pi^2$ in $\cone_\epsilon$, with endpoints
    \begin{equation}
    	\begin{aligned}
    		u^1&=(\pi^1(0),0) \qquad v^1=(\pi^1(1),1)\\
    		u^2&=(\pi^2(0),0) \qquad v^2=(\pi^2(1),1).
    	\end{aligned}
    \end{equation}
    An \textit{upper} (lower) \textit{bouquet} consists of two disjoint paths which share their north(south)-most endpoint $q$ ($p$). We find (we shall shortly explain how) two disjoint paths $\gamma^3$ and $\gamma^4$ going from $v_1$ and $v_2$ (respectively) to the point $q=(0,1+\epsilon^{3/2})$. We call $\bqtu_{q,\bm{v}}=\gamma^3\cup \gamma^4$ the upper bouquet going from the vector $\bm{v}=(v_1,v_2)\in \R^4_{\uparrow}$  to the point $q$. Similarly, we construct the lower bouquet $\bqtd_{p,\bm{u}}=\gamma^5\cup \gamma^6$ that consists of two disjoint paths connecting the points $p$ and $\bm{u}$. See Figure \ref{fig:4}. We now define
    \begin{equation}
    	\begin{aligned}
    		\gamma^1=&\gamma^5\cup\pi^1\cup\gamma^3\\
    		\gamma^2=&\gamma^6\cup\pi^2\cup\gamma^4.
    	\end{aligned}
    \end{equation}
    Notice that from construction, $	\gamma^1$ and $\gamma^2$ are disjoint paths (not necessarily geodesics), which concludes the first part of item \ref{it1}. How does one construct a bouquet in the first place? In other words, how does one find two disjoint  paths between two points to one shared point?  In \cite{Hammond2}, the  model  was Brownian LPP, there the author utilized a natural order of paths in two melons sharing one endpoint (see \cite[Appendix B]{Hammond2}). In \cite{Dauvergne23}, and as we shall do in this work, the  bouquet is taken to be the disjoint optimizer from Theorem \ref{thm:do} i.e.\  $\bqtd_{p,\bm{u}}= \bm{\pi}$, where $\bm{\pi}$ is the disjoint optimizer between the point $p$ and the points $(u^1;u^2)$. From here, to obtain \eqref{eq7}, we must show that a) the weights of the bouquets $\bqtu_{q,\bm{v}}$ and $\bqtd_{p,\bm{u}}$ are of order $\epsilon^{1/2}$ b) control on the modulus of continuity of the Airy$_2$ process. Both $a)$ and $b)$ are then treated by Lemmas \ref{lm:ub} and \ref{lm:ub4}. 
   
	Step \ref{it:gt} : In both the Brownian LPP setup as well as the DL, we use a version of the celebrated Greene's Theorem which says that the maximal weight along $k$ disjoint geodesics equals the sum of the first $k$ lines of the RSK output. The version we need here is \cite[Corollary 1.9]{Dauvergne-Zhang-2021} which we state here as Theorem \ref{thm:DZ}.
	
	Step \ref{it:lc}: The idea here is to use the Brownian Gibbs property of the Airy line ensemble in order to show that the probability of the top two lines in the ensemble being $\epsilon^{1/2}$ close to each other is small enough for our applications. However, here it gets a bit more complicated. As we are ultimately interested in bounding the probability  $\P(\Mdisjc_{\cone_{\text{scl}}}^{\epsilon}\geq 2)$, we actually have \textit{two pairs} of disjoint geodesics to take into account. The end points of the first and second pairs can be as far as $O(L(\epsilon^{-1}))$ (the length of the top basis of the trapezoid) from each other. This translates to a  bound on the probability of the top two lines of the Airy line ensemble being $\epsilon^{1/2}$ close to one another at two points $u_1\leq u_2$ in the interval $[-L(\epsilon^{-1})/2,L(\epsilon^{-1})/2]$ (Figure \ref{fig:3}). To accomplish this, we use  the recent results in \cite{dauvergne2023wiener}.

   \begin{figure}[h!]
   	\begin{subfigure}{0.5\textwidth}	
   		\centering
   		\begin{tikzpicture}[scale=0.45, every node/.style={scale=0.7}]
   			\draw[thick] (-1,0) -- (1,0);
%
%
   			\draw[dashed] (-7,7) -- (7,7);

   			\begin{scope}[shift={(5,-1.3)}]
   			\draw[thick] (-6,7.5) -- (-4,7.5);
   			\draw[thick] (-6,7.4) -- (-6,7.6);
   			\draw[thick] (-4,7.4) -- (-4,7.6);
   			\node [scale=1][] at (-5,7)
   			{$\epsilon$};
   			\end{scope}
   			
   			\draw[red, line width=0.5mm] (-1,6.8) -- (-1,7.2);
   			\draw[red, line width=0.5mm] (1,6.8) -- (1,7.2);
   			
   			\draw[red,line width=0.5mm] (7,6.8) -- (7,7.2);
   			\draw[red, line width=0.5mm] (5,6.8) -- (5,7.2);
   			
   			\draw [thick] plot [smooth, tension=0.8] coordinates { (0.2,0) (0.45,1) (0.5,2) (0.6,3) (0.55,4) (0.3,5) (0.4,6) (0.63,7)};
   			\draw [thick] plot [smooth, tension=0.8] coordinates { (-0.6,0) (-0.4,1) (-0.45,2) (-0.63,3) (-0.4,4) (-0.25,5) (-0.6,6) (-0.7,7)};
   			
   			\draw [blue, thick] plot [smooth, tension=0.8] coordinates { (-0.7,7) (-0.75,8) (-0.5,9) (0,10)};	
   			\draw [blue, thick] plot [smooth, tension=0.8] coordinates { (0.63,7) (0.8,8) (0.6,9) (0,10)};
   			
   			\draw [blue, thick] plot [smooth, tension=0.8] coordinates { (-0.6,0) (-0.75,-1) (-0.5,-2) (0,-3)};	
   			\draw [blue, thick] plot [smooth, tension=0.8] coordinates { (0.2,0) (0.4,-1) (0.2,-2) (0,-3)};
   			
   			\draw [ thick] plot [smooth, tension=0.8] coordinates { (0.45,1) (1.2,2) (2.2,3) (3,4) (4.3,5) (5,6) (5.6,7)};	
   			\draw [ thick] plot [smooth, tension=0.8] coordinates {(0.9,0) (1.2,1) (1.8,2) (2.7,3) (3.6,4) (4.7,5) (6,6) (6.5,7)};
   			
   			\draw [olive, thick] plot [smooth, tension=0.8] coordinates { (5.6,7) (5.7,8) (6.2,9) (7,10)};	
   			\draw [olive, thick] plot [smooth, tension=0.8] coordinates { (6.5,7) (7,8) (7.1,9) (7,10)};
   			
   			\draw [olive, thick] plot [smooth, tension=0.8] coordinates { (0.9,0) (1,-1) (0.6,-2) (0,-3)};
   			
   			\draw[thick] (2,0) -- (2,-3);
   			\draw[thick] (1.9,0) -- (2.1,0);		
   			\draw[thick] (1.9,-3) -- (2.1,-3);
   			
   			\node [scale=1][] at (2.6,-1.5)
   			{$\epsilon^{3/2}$};
  			\draw[thick] (2,7) -- (2,10);
   			\draw[thick] (1.9,7) -- (2.1,7);		
   			\draw[thick] (1.9,10) -- (2.1,10);
   			
   			\node [scale=1][] at (2.6,8.5)
   			{$\epsilon^{3/2}$};
   			\node [scale=1][] at (-2.2,9.5)
   			{$\textcolor{blue}{\substack{upper\\ bouquet}}$};
   			\draw[fill] (0,9.95) circle (.4ex) ;
   			\node [scale=1][] at (0.1,10.4)
   			{$q^1$};
   			\node [scale=1][] at (-2.2,-2)
   			{$\textcolor{blue}{\substack{lower\\ bouquet}}$};
   			\node [scale=1][] at (7.1,10.4)
   			{$q^2$};
   			\draw[fill] (7,9.95) circle (.4ex) ;
   			\node [scale=1][] at (0,-3.3)
   			{$p$};
   			\draw[fill] (0,-2.95) circle (.4ex) ;
   			
   				\node [scale=1][] at (0,7.3)
   			{$u_1$};
   				\node [scale=1][] at (6.1,7.3)
   			{$u_2$};
   			\draw[fill] (0,7) circle (.4ex) ;
   			\draw[fill] (6.1,7) circle (.4ex) ;
   		\end{tikzpicture}
   		\caption{The event $\{\Mdisjc_{\cone_{\text{scl}}}^{\epsilon}\geq 2\}$}\label{fig:bqt}
   	\end{subfigure}%
   	\hspace*{\fill}
   	\begin{subfigure}{0.5\textwidth}	
   		\centering
   		\begin{tikzpicture}[scale=0.5, every node/.style={scale=0.8}]
   			\draw[thick] (-7,7) -- (7,7);
   			\draw[red, line width=0.5mm] (-0.5,6.8) -- (-0.5,7.2);
   			\draw[red,line width=0.5mm] (0.5,6.8) -- (0.5,7.2);
   			
   			\draw[red,line width=0.5mm] (6.5,6.8) -- (6.5,7.2);
   			\draw[red,line width=0.5mm] (5.5,6.8) -- (5.5,7.2);
   			
   			\draw [thick] plot [smooth, tension=0.8] coordinates {(-7,8) (-6,8.5) (-5,8.7) (-4,9.5) (-3,9.1) (-2,8.7) (-1,8.8) (0,8.5) (1,9.3) (2,9.6) (3,9.4) (5,9.2) (6,8.7) (7,9.2)};
   			\draw [blue,thick] plot [smooth, tension=0.8] coordinates {(-7,6) (-6,5.8) (-5,6.4) (-4,6.2) (-3,6.4) (-2,6.1) (-1,7.8) (0,8.1) (1,7.2) (2,7.5) (3,6.8) (5,7.2) (6,8.4) (7,7.5)};
   			\draw[thick] (1,8) -- (1,8.7);
   			\draw[thick] (0.9,8) -- (1.1,8);
   			\draw[thick] (0.9,8.7) -- (1.1,8.7);
   			\node [scale=1][] at (1.7,8.5)
   			{$\epsilon^{1/2}$};
   			\node [scale=1] at (1.7,10.5)
   			{$\ele_1$};
   			\node [blue,scale=1] at (1.7,6)
   			{$\ele_2$};
   				\node [scale=1][] at (0,7.3)
   			{$u_1$};
   			\node [scale=1][] at (6.1,7.3)
   			{$u_2$};
   			\draw[fill] (0,7) circle (.4ex) ;
   			\draw[fill] (6,7) circle (.4ex) ;
   		\end{tikzpicture}
   		\caption{The Airy lines $\ele_1$ and $\ele_2$ are close where the disjoint geodesics can be found}
   	\end{subfigure}
   	\caption{Two aspects of the event $\disjc^3_{\cone_\text{scl}}$}\label{fig:3}
   \end{figure}
    \section{Upper bound on the probability of three disjoint geodesics in a trapezoid}\label{sec:ubt}
       In this section we prove Proposition \ref{prop:2}, i.e.\ we obtain a bound on the probability of three disjoint geodesics in the trapezoid $\cone_{\text{scl}}:=\trpz\big((0,0),\epsilon;(0,1), L(\epsilon^{-1}) \big)$. The proof is somewhat dense with notation and we suggest the reader to consult Figure \ref{fig:cons} while reading. 
   
    We begin by meshing up the interval $[-L(\epsilon^{-1})/2,L(\epsilon^{-1})/2]$ into intervals of size $\epsilon$ with overlap.  Define the intervals
   \begin{equation}\label{eq83}
   	\begin{aligned}
   		J_i:=\Big[\frac\epsilon2(i-1),\frac\epsilon2(i+1)\Big)\times \{1\}, \qquad \forall i\in \Ind^\epsilon:=\Big\llbracket -\epsilon^{-1}L(\epsilon^{-1}),\epsilon^{-1}L(\epsilon^{-1})\Big\rrbracket.
   	\end{aligned}
   \end{equation}
as well as the interval 
   \begin{equation}
   	\tilde{J}:=[-\frac{\epsilon}2,\frac{\epsilon}2]\times\{0\}.
   \end{equation}
   and the points 
   \begin{equation}\label{eq74}
   	p=\big(0,-t_\epsilon\big), \qquad q^i=\Big(i\frac{\epsilon}2\frac{(1+2t_\epsilon)}{(1+t_\epsilon)},1+t_\epsilon\Big)
   \end{equation}
   where
   \begin{equation}
   	t_\epsilon=[H(\epsilon^{-1})]^{-3/2}\epsilon^{3/2},
   \end{equation}
  for some increasing function $H$ to be specified later. Our choice of the point $q^i$ is such that  the straight line starting from $p$ and terminating at $q^i$ crosses through the point $(\frac{i\epsilon}2,1)$ the midpoint of the interval $J_i$. 
   Recall  $\disjc^k_{\cone_{\text{scl}}}$ from \eqref{dijs} and Lemma \ref{lem:dis}. By definition, on the event $\{\Mdisjc_{\cone_{\text{scl}}}^{\epsilon}\geq k\}$, there exist at least $k$ ordered pairs of disjoint geodesics in $\cone_{\text{scl}}$. If $\bm{\pi}=(\pi^1,\ldots,\pi^{k+1})$ are $k+1$ disjoint geodesics in  $\cone_{\text{scl}}$,  we denote the $k$ ordered pairs of disjoint geodesics constructed from $\bm{\pi}$ in Lemma \ref{lem:dis} with $\phi=\epsilon/2$ by $\hat{\bm{\pi}}=(\hat{\pi}^1,\ldots,\hat{\pi}^k)$ where  for $i\in[k]$ $\hat{\pi}^i=(\hat{\pi}_1^i,\hat{\pi}^i_2)$, and  $\hat{\pi}^i_1$ and $\hat{\pi}^i_2$ are two disjoint geodesics in $\cone_{\text{scl}}$ s.t.\  $\hat{\pi}^i_2(1)-\hat{\pi}^i_1(1)<\epsilon/2$ and $\hat{\pi}^i_1\leq \hat{\pi}^i_2$.  Define the vectors $\bar{\bm{v}}=(\bm{v}^1,\ldots,\bm{v}^k),\bar{\bm{u}}=(\bm{u}^1,\ldots,\bm{u}^k)\in(\R^2\times\R^2)^k$ via  
   \begin{equation}\label{eq75}
   	\bm{v}^i=(v^i_1,v^i_2)=\big([\hat{\pi}_1^i(1)],[\hat{\pi}_2^i(1)]\big),\qquad  \bm{u}^i=(u^i_1,u^i_2)=\big([\hat{\pi}_1^i(0)],[\hat{\pi}_2^i(0)]\big), \qquad \forall i\in [k].
   \end{equation}
   The $i$'th element of $\bar{\bm{v}}$ (resp. $\bar{\bm{u}}$) is therefore a vector of two points on the plane that registers the endpoints of $\hat{\pi}_1^i$ and $\hat{\pi}_2^i$ in $\coneu_{\text{scl}}$ (resp. $\coned_{\text{scl}}$). Next we define
   \begin{equation}\label{eq78}
   	\hat{l}_i(\hat{\bm{\pi}})=\max\{l\in \Ind^\epsilon:[\hat{\pi}_1^i(1)]\in J_{l} \},\quad i\in[k].
   \end{equation}
   To ease the notation we shall omit the dependency on $\hat{\bm{\pi}}$ and simply write $	\hat{l}_i$. From $\hat{\pi}^i_1\leq \hat{\pi}^i_2$ and $\hat{\pi}^i_1(1)-\hat{\pi}^i_2(1)<\epsilon/2$, it must be that $	\hat{l}_i$ is the largest   index  $j\in\Ind^\epsilon$ such that  $[\hat{\pi}_1^i(1)],[\hat{\pi}_2^i(1)]\in J_j$. Finally we define the random vector  $\hat{q}=(\hat{q}^1,\ldots,\hat{q}^k)\in(\R\times \{1+t_\epsilon\})^k$ via
   \begin{equation}
   	\hat{q}^i=q^{\hat{l}_i}, \quad i\in[k].
   \end{equation}
   Note that $\hat{q}^i$ is the point from \eqref{eq74} associated with the interval  $J_{\hat{l}_i}$.  Let us now define the set of paths over a prescribed timeline in the DL i.e.\ for real numbers $s<t$
   \begin{equation}
   	\paths st=\{[\pi(r)]_{r\in[s,t]}: \text{$\pi$ is a continuous function from  $[s,t]$ to $\R$}\}.
   \end{equation}
   For $s<r<t$ reals, $\pi\in\paths sr$ and $\eta \in \paths rt$ such that $\pi(r)=\eta(r)$ we define the concatenated path $\eta\oplus \pi \in\paths st$  via
   \begin{equation}
   	[\eta \oplus \pi] (z)=
   	\begin{cases}
   		[\pi (z)] & z\in[s,r]\\
   		[\eta(z)] & z\in [r,t].
   	\end{cases} 
   \end{equation}
   Let $k\in\N$, $s<r<t$ be reals,  $\bm{\pi}=(\pi^1,\ldots,\pi^k)\in \paths sr^k$ and $\bm{\eta}=(\eta^1,\ldots,\eta^k)\in \paths rt^k$, where we used the notation $\paths sr^k=\big(\paths sr\big)^k$. Assume further  that $\pi^i(r)=\eta^i(r)$ for all $i\in[k]$, we extend the definition of $\oplus$ to multi-paths via
   \begin{equation}
   	\big(\bm{\eta}\oplus\bm{\pi}\big)(i)=\eta^i\oplus\pi^i \quad i\in[k].
   \end{equation}
   We now define the upper and lower bouquets, as was explained in Section \ref{sec:1} via the maximal optimizers in the extended landscape. Precisely, for $p=(x,t)\in\R^2$, $s>0$ and $\bm{u}=(u^1,\ldots,u^k)\in \R\times\{t+s\}$ s.t.\ $(u^1_1,\ldots,u^k_1)\in\R_{\geq}$ we define
   \begin{equation}
   	\bqtd_{p,\bm{u}}=\arg \max_{\pi_1,\ldots\pi_k}\sum_{i=1}^k\dl(\pi_i)
   \end{equation}
   where the $\sup$ is over all disjoint paths from $p$ to the points $u^i$. When there is more than one $k$-tuple of disjoint optimizers we chose one of them, thus, strictly speaking the equality in the last display is in fact containment. Similarly we define $\bqtu_{\bm{u},q}$ - for $q=(x,t)\in\R^2$, $s>0$ and $\bm{v}=(v^1,\ldots,v^k)\in \R\times\{t-s\}$ s.t.\ $(v^1_1,\ldots,v^k_1)\in\R_{\geq}$ we define
   \begin{equation}
   	\bqtu_{q,\bm{v}}=\arg \max_{\pi_1,\ldots\pi_k}\sum_{i=1}^k\dl(\pi_i)
   \end{equation}
   Next we define
   \begin{equation}
   	\gamma^{\bm{\hat{\pi}}}=\big(\gamma_1^{\bm{\hat{\pi}}},\ldots,\gamma_k^{\bm{\hat{\pi}}}\big)\in \big(\paths {-t_\epsilon}{1+t_\epsilon}^2\big)^k
   \end{equation}
   where
   \begin{equation}
   	\gamma_i^{\bm{\hat{\pi}}}=\bqtu_{	\hat{q}^i,\bm{v}^i}\oplus \hat{\bm{\pi}}(i) \oplus \bqtd_{p,\bm{u}^i} \qquad i\in[k].
   \end{equation}
   In words, for $i\in[k]$, $\gamma^{\bm{\hat{\pi}}}_i$ is a pair of disjoint paths (generally not geodesics) emanating from $p$ and terminating at $\hat{q}^i$ (see Figure \ref{fig:cons}). Informally, $\gamma^{\bm{\hat{\pi}}}_i$ is the construction of two disjoint paths with shared points from the two geodesics $(\hat{\pi}^i_1,\hat{\pi}^i_2)$, as described in Step 1 of the HDZ technique. For a trapezoid $\cone$ and $k\geq 2$ we denote by
   \begin{equation}
   	\disjcg^k_{\cone}=\{\pi_1,\pi_2,\ldots,\pi_k\in\uni: \pi_1,\pi_2,\ldots,\pi_k\text{ are disjoint in $\cone$}\},
   \end{equation}
     the set of all $k$-tuples of disjoint paths in $\cone$. Note the difference between $\disjcg^k_{\cone}$ and  $\disjc^k_{\cone}$ defined earlier in \eqref{dijs}, where the former is a random set of $k$-tuples of disjoint paths while the latter is the event where  $k$-tuples of disjoint paths exists. Nevertheless, the following holds
     \begin{equation*}
     	\{\disjcg^k_{\cone}\neq\emptyset\}=\disjc^k_{\cone}.
     \end{equation*}
    For each $i\in \Ind_\epsilon$ we define the trapezoid
   $\tilde{\cone}_i=J_i\cup \tilde{J}$.  For a path $\pi$, recall the definition of $\dl(\pi)$ from \eqref{eq136} - the action of the DL on paths. The statement and proof of the following result uses the  setup and notation leading to \eqref{eq75} with  $k=2$. The result itself can be viewed as an analogue of \eqref{eq7} in Step \ref{it1} in the HDZ. 
   \begin{figure}	
   	\centering
   \begin{tikzpicture}[scale=0.65, every node/.style={scale=0.8}]
   	\draw[dashed] (-7,7) -- (9,7);

   	\draw[purple,thick] (-1,0) -- (1,0);
   	
   	\draw[thick] (-7,6.8) -- (-7,7.2);
   	\draw[thick] (-5,6.8) -- (-5,7.2);
   	
%
   	\node [scale=1][] at (-5,5.8)
   	{$\epsilon$};
   	
   	\begin{scope}[shift={(0,-1.3)}]
   		\draw[thick] (-6,7.5) -- (-4,7.5);
   		\draw[thick] (-6,7.4) -- (-6,7.6);
   		\draw[thick] (-4,7.4) -- (-4,7.6);
   	\end{scope}
   	
   	\draw[purple, line width=0.5mm] (-1,6.8) -- (-1,7.2);
   	\draw[purple, line width=0.5mm] (1,6.8) -- (1,7.2);
   	\draw[purple, line width=0.5mm] (-1,7) -- (1,7);
   	
  	\draw[purple, line width=0.5mm] (-4,6.8) -- (-4,7.2);
   	\draw[purple, line width=0.5mm] (-6,6.8) -- (-6,7.2);
   	\draw[purple, line width=0.5mm] (-6,7) -- (-4,7);
   	
   	\draw[purple,line width=0.5mm] (7,6.8) -- (7,7.2);
   	\draw[purple, line width=0.5mm] (5,6.8) -- (5,7.2);
   	 \draw[purple, line width=0.5mm] (5,7) -- (7,7);
   	   	
   	\draw [orange,dashed, thick] plot [smooth, tension=0.8] coordinates { (0.2,0) (0.45,1) (0.5,2) (0.6,3) (0.55,4) (0.3,5) (0.4,6) (0.63,7)};
   	\draw [orange,dashed, thick] plot [smooth, tension=0.8] coordinates { (-0.6,0) (-0.4,1) (-0.45,2) (-0.63,3) (-0.4,4) (-0.25,5) (-0.6,6) (-0.7,7)};
   	
   	\draw [blue, dashed] plot [smooth, tension=0.8] coordinates { (-0.7,7) (-0.75,8) (-0.5,9) (0,10)};	
   	\draw [blue, dashed] plot [smooth, tension=0.8] coordinates { (0.63,7) (0.8,8) (0.6,9) (0,10)};
   	
   	\draw [blue, dashed] plot [smooth, tension=0.8] coordinates { (-0.6,0) (-0.75,-1) (-0.5,-2) (0,-3)};	
   	\draw [blue, dashed] plot [smooth, tension=0.8] coordinates { (0.2,0) (0.4,-1) (0.2,-2) (0,-3)};
   	
   	\draw [orange,dashed, thick] plot [smooth, tension=0.8] coordinates { (0.45,1) (1.2,2) (2.2,3) (3,4) (4.3,5) (5,6) (5.6,7)};	
   	\draw [orange,dashed,, thick] plot [smooth, tension=0.8] coordinates {(0.9,0) (1.2,1) (1.8,2) (2.7,3) (3.6,4) (4.7,5) (6,6) (6.5,7)};
   	
   	\draw [thick] plot [smooth, tension=0.8] coordinates { (-0.6,0) (-0.4,1) (-0.45,2) (-0.8,3) (-1.1,4) (-2,5) (-2.5,6) (-3,7)};
   	\draw [thick] plot [smooth, tension=0.8] coordinates {(0.9,0) (1.2,1) (2,2) (3.2,3) (5,4) (6,5) (7.5,6) (8,7)};
   	\draw [ thick] plot [smooth, tension=0.8] coordinates { (0.2,0) (0.45,1) (0.8,2) (1,3) (1.3,4) (1.5,5) (2,6) (3,7)};
   	
   	\node [scale=1][] at (-3,5)
   	{$\pi^1$};
   	\node [scale=1][] at (2,5)
   	{$\pi^2$};
   	\node [scale=1][] at (7,5)
   	{$\pi^3$};
   	
   	\node [orange, scale=1] at (-1.1,6)
   	{$\hat{\pi}^1_1$};
   	\node [orange, scale=1] at (0.9,6)
   	{$\hat{\pi}^1_2$};
   	\node [orange, scale=1] at (4.2,6)
   	{$\hat{\pi}^2_1$};
   	\node [orange, scale=1] at (6.5,6)
   	{$\hat{\pi}^2_2$};
   	
   	\draw [olive, dashed] plot [smooth, tension=0.8] coordinates { (5.6,7) (5.7,8) (6.2,9) (7,10)};	
   	\draw [olive, dashed] plot [smooth, tension=0.8] coordinates { (6.5,7) (7,8) (7.1,9) (7,10)};
   	
   	\draw [olive, dashed] plot [smooth, tension=0.8] coordinates { (0.9,0) (1,-1) (0.6,-2) (0,-3)};
   	
   	\draw[thick] (2,0) -- (2,-3);
   	\draw[thick] (1.9,0) -- (2.1,0);		
   	\draw[thick] (1.9,-3) -- (2.1,-3);
   	
   	\node [scale=1][] at (2.6,-1.5)
   	{$t_\epsilon$};
   	
   	\draw[thick] (2,7) -- (2,10);
   	\draw[thick] (1.9,7) -- (2.1,7);		
   	\draw[thick] (1.9,10) -- (2.1,10);
   	\node [scale=1][] at (2.6,8.5)
   	{$t_\epsilon$};
   	\draw[fill] (0,9.95) circle (.4ex) ;
   	\node [scale=1][] at (0.1,10.4)
   	{$\hat{q}^1$};
   	\node [scale=1][] at (7.1,10.4)
   	{$\hat{q}^2$};
   	\draw[fill] (7,9.95) circle (.4ex) ;
   	\node [scale=1][] at (0,-3.3)
   	{$p$};
   	\draw[fill] (0,-2.95) circle (.4ex) ;
   	
   	\node [purple, scale=1][] at (0,7.5)
   	{$J_{\hat{l}_1}$};
   	 \node [purple, scale=1][] at (6,7.5)
   	{$J_{\hat{l}_2}$};
   	 \node [purple, scale=1][] at (-5,7.5)
   	{$J_i$};
   	\node [purple, scale=1][] at (-1.3,0.7)
   	{$\tilde{J}$};
   	\node [scale=1][] at (-5.6,10.4)
   	{$q^i$};
   	\draw[fill] (-5.6,9.95) circle (.4ex) ;
   \end{tikzpicture}
   \caption{From $\bm{\pi}$ to $\gamma^{\hat{\bm{\pi}}}$ with $k=2$. The solid paths in black are   three geodesics $\bm{\pi}=(\pi^1,\pi^2,\pi^3)$ from the set $\disjcg^3_{\cone_{\text{scl}}}$. From $\bm{\pi}$ we construct 2 pairs of ordered disjoint geodesics $\hat{\pi}^1=(\hat{\pi}^1_1,\hat{\pi}^1_2)$ and $\hat{\pi}^2=(\hat{\pi}^2_1,\hat{\pi}^2_2)$ with close endpoints. Finally, connecting by bouquets  the upper and lower  endpoints of $\hat{\pi}^j$ to $\hat{q}^j$ and $p$ respectively,  we obtain $\gamma^{\hat{\bm{\pi}}}$}\label{fig:cons}
   \end{figure}
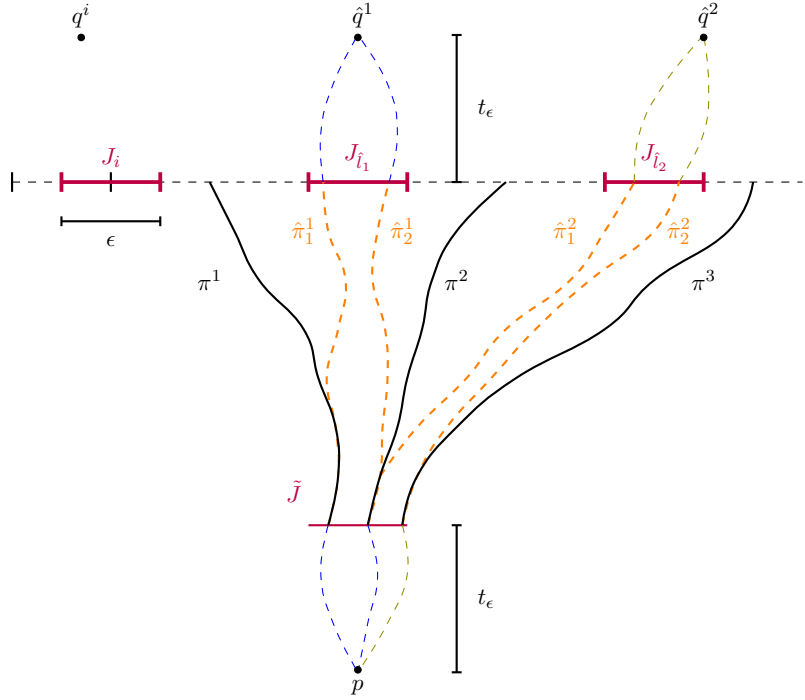
   
   \begin{lemma}\label{lm:7}
   	Fix $0<\delta<1/2$. For $0<\epsilon<1/32$ let  $ H(\epsilon^{-1}):=\tfrac1{16}\epsilon^{-2\delta/3}\vee 8$  and assume  $L(\epsilon^{-1})<\epsilon^{-1/2}/4$. Then there exist constants $c,d>0$ such that 
   	\begin{equation}
   		\P\Big(\exists \bm{\pi}\in\disjcg^3_{\cone_{\rm{scl}}}: \max_{j\in\{1,2\}}\{2\dl(p,\hat{q}^i)-\dl(\gamma_j^{\hat{\bm{\pi}}})\} >\epsilon^{1/2-\delta} \Big)\leq c\exp\Big(-d\epsilon^{-1/2}\Big).
   	\end{equation}
   \end{lemma}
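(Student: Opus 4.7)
The strategy I would follow is to decompose $2\dl(p,\hat{q}^j)-\dl(\gamma_j^{\hat{\bm{\pi}}})$ into a deterministic parabolic excess and a stochastic $\ash$-fluctuation via identity \eqref{eq108}. The parabolic excess can be controlled by the tuning of $t_\epsilon$, while the $\ash$-fluctuation is bounded with exponentially high probability using Lemmas \ref{lm:ub} and \ref{lm:ub4}.

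First I would split
\begin{align*}
2\dl(p,\hat{q}^j) - \dl(\gamma_j^{\hat{\bm{\pi}}}) =\;& \sum_{i=1}^2 \bigl[\dl(p,\hat{q}^j) - \dl(p,u_i^j) - \dl(u_i^j,v_i^j) - \dl(v_i^j,\hat{q}^j)\bigr] \\
&+ \Bigl[\sum_i \dl(p,u_i^j) - \dl(\bqtd_{p,\bm{u}^j})\Bigr] + \Bigl[\sum_i \dl(v_i^j,\hat{q}^j) - \dl(\bqtu_{\hat{q}^j,\bm{v}^j})\Bigr].
\end{align*}
All three groups are non-negative: the first by super-additivity (triangle inequality) of $\dl$, the other two because the disjoint optimizer weight is at most the sum of unconstrained geodesic weights. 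Converting to $\ash$ via \eqref{eq108}, the parabolic parts cancel inside the bouquet brackets (same endpoints), whereas the triangle-inequality terms produce an explicit parabolic excess $\mathcal{P}_i \geq 0$. Using $|u^j_{i,1}|\leq \epsilon/2$, $|v^j_{i,1} - \hat{l}_j\epsilon/2|\leq \epsilon/2$, $L(\epsilon^{-1})\leq \epsilon^{-1/2}/4$ and the choice $t_\epsilon = H(\epsilon^{-1})^{-3/2}\epsilon^{3/2} = 64\,\epsilon^{3/2+\delta}$, a direct quadratic minimization (the straight line from $p$ to $\hat q^j$ being the minimizer of the quadratic form) gives $\mathcal{P}_i \leq C\epsilon^2/t_\epsilon \leq C'\epsilon^{1/2-\delta}$; tuning constants this is made smaller than $\tfrac12\epsilon^{1/2-\delta}$.

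It then suffices to control the remaining $\ash$-part, which consists of terms of two types: (a) small-span $\ash$ values $\ash(\bqtd_{p,\bm{u}^j})$, $\ash(\bqtu_{\hat{q}^j,\bm{v}^j})$ and the companion single-path $\ash(p,u_i^j)$, $\ash(v_i^j,\hat{q}^j)$, all over an interval of length $t_\epsilon$, and (b) the bulk comparison $\ash(p,\hat{q}^j) - \ash(u_i^j,v_i^j)$. For (a), Lemma \ref{lm:ub} with $\|\bm{y}\|_1\leq \epsilon$ yields a bound $R \cdot(1+\epsilon t_\epsilon^{-2/3})\cdot|\log t_\epsilon|\cdot t_\epsilon^{1/3} = O(R\,\epsilon^{1/2-\delta/3}\log\epsilon^{-1})$, which is of smaller order than $\epsilon^{1/2-\delta}$ once $R$ is at most a polylog in $\epsilon^{-1}$. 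For (b), I would insert auxiliary points on the geodesic from $p$ to $\hat{q}^j$ at the time slices $0$ and $1$, writing the bulk difference as a type-(a) small-span remainder plus a matched time-span piece; the matched piece is of the form $\ash(x,0;y,1) - \ash(x',0;y',1)$ with $|x-x'|\vee|y-y'|=O(\epsilon)$ and $b = O(L(\epsilon^{-1}))$, and Lemma \ref{lm:ub4} bounds it by $C\epsilon^{1/2}\log^{1/2}(4L(\epsilon^{-1})\epsilon^{-1})$ with $\P(C>m)\leq cb^{10}e^{-dm^{3/2}}$.

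Finally I would union-bound over the discrete index $\hat{l}_j \in \Ind^\epsilon$ (of cardinality $O(\epsilon^{-1}L(\epsilon^{-1}))$), for each fixed value of which the continuous sup over $(\bm{u}^j,\bm{v}^j)$ in bounded intervals is absorbed into the modulus-of-continuity bounds. The $\exp(-da)$ tail of $R$ at $a\sim\epsilon^{-1/2}$ and the $\exp(-dm^{3/2})$ tail of $C$ at $m\sim\epsilon^{-1/3}$ each yield probability $\exp(-d\epsilon^{-1/2})$, and the polynomial union-bound factor is absorbed into the exponential, producing the claimed estimate. The main obstacle I expect is the bulk comparison of type (b) across mismatched time spans; the stub-insertion device that reduces it to matched spans plus extra type-(a) remainders, combined with the sharp polynomial tails in Lemma \ref{lm:ub4}, is the technical heart of the argument.
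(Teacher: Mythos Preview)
Your decomposition and the tools you invoke are essentially the paper's, but one step is not justified. When you ``insert auxiliary points on the geodesic from $p$ to $\hat q^j$ at the time slices $0$ and $1$'' and assert the matched piece has $|x-x'|\vee|y-y'|=O(\epsilon)$, you are claiming that $\pi_p^{\hat q^j}(0)\in\tilde J$ and $\pi_p^{\hat q^j}(1)\in J_{\hat l_j}$. This is not automatic from the construction; it is a transversal-fluctuation statement for the geodesic $\pi_p^{\hat q^j}$. The paper isolates it as the event $\mathrm{GeoLoc}_\epsilon=\{\pi_p^{\hat q^j}(0)\in\tilde J,\ \pi_p^{\hat q^j}(1)\in J_{\hat l_j},\ j=1,2\}$ and controls its complement via Lemma~\ref{lm:12}: the point $\hat q^j$ was chosen precisely so that the straight segment from $p$ to $\hat q^j$ already passes through the midpoints of $\tilde J$ and $J_{\hat l_j}$, and the geodesic deviates from that segment by at most $R\,t_\epsilon^{2/3}\cdot(\text{logs})=O(\epsilon/H(\epsilon^{-1}))$ at each time level, with stretched-exponential tails in $R$. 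Without this localization the stub-insertion does not yield an $O(\epsilon)$ endpoint match and Lemma~\ref{lm:ub4} cannot be invoked.

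A minor point: the tail thresholds you name at the end ($a\sim\epsilon^{-1/2}$ for $R$, $m\sim\epsilon^{-1/3}$ for $C$) do not match your own computation. Forcing the type-(a) terms below a fraction of $\epsilon^{1/2-\delta}$ only requires $R\lesssim\epsilon^{-2\delta/3}/\log\epsilon^{-1}$, and forcing the type-(b) modulus bound below that level only requires $C\lesssim\epsilon^{-\delta}$; the resulting probabilities are of the form $\exp(-d\,\epsilon^{-c\delta})$ rather than $\exp(-d\,\epsilon^{-1/2})$. The paper's own proof in fact produces bounds with $\delta$-dependent exponents, so the $\epsilon^{-1/2}$ in the lemma statement appears to be a slip; any stretched-exponential suffices downstream, so this is harmless. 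The union bound over $\hat l_j$ you propose is also unnecessary, since Lemmas~\ref{lm:ub} and~\ref{lm:ub4} already give bounds uniform over the relevant endpoint ranges.
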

   \begin{proof}
   	Recall that for $i\in\Ind_\epsilon$, the point $q^i$ was constructed such that the straight line going from the point $p$ to the point $q^i$ crosses through the point $(\epsilon i,1)$, the center of the interval $J_i$.  Define the event 
   	\begin{equation}
   		\text{GeoLoc}_\epsilon=\big \{\pi_p^{\hat{q}^j}(1) \in J_{\hat{l}_j},\pi_p^{\hat{q}^j}(0) \in \tilde{J}, \text{ for all $j\in\{1,2\}$}\big\}.
   	\end{equation}
   	Our hypothesis on $L(\cdot),\epsilon$ and  our choice of $t_\epsilon$ guarantee that any straight line going from the point $p$ to any $q^i$ for $i\in\Ind_\epsilon$ must cross the $x$-axis through the interval $(-\epsilon/4,\epsilon/4)$. Using Lemma \ref{lm:12} with $s=-t_\epsilon, t=1+t_\epsilon$, $M=H(\epsilon^{-1})/4>1$ and with $r=0$ and $r=1$ shows that there exist $c,d>0$ such that
   	\begin{equation}\label{eq15}
   		\P(\text{GeoLoc}_\epsilon)\geq 1-c\exp\big(-d[H(\epsilon^{-1})]^2\big).
   	\end{equation}
   	Recall the vector of points $\bar{\bm{u}}$ and $\bar{\bm{v}}$ from \eqref{eq75}. For   a union of geodesics  $\bm{\pi}=\cup_{i=1}^k\pi_i$, we define $\dl(\bm{\pi})=\sum_{i=1}^k\dl(\pi_i)$. By definition of $\gamma^{\hat{\bm{\pi}}}$ we can write
   	\begin{equation}\label{eq19}
   		\begin{aligned}
   			\big|\dl(\gamma^{\hat{\bm{\pi}}}_j)- 2\dl(p,\hat{q}^j)\big|&\leq \big|\dl\big(\bqtd_{p,\bm{u}^j}\big)-2\dl(\pi_{p}^{\hat{q}^j}|_{[-t_\epsilon,0]})\big|+\big|\dl(\hat{\pi}^j)-2\dl(\pi_{p}^{\hat{q}^j}|_{[0,1]})\big|\qquad \forall j\in\{1,2\},\\
   			&+\big|\dl\big(\bqtu_{\hat{q}^j,\bm{v}^j}\big)-2\dl(\pi_{p}^{\hat{q}^j}|_{[1,1+t_\epsilon]})\big|. 
   		\end{aligned}
   	\end{equation}
    Set $\tilde{u}^j=\pi_{p}^{\hat{q}^j}(0)$ and $\tilde{v}^j=\pi_{p}^{\hat{q}^j}(1)$. Recall that  $\hat{\pi}^j_1$ and $\hat{\pi}^j_2$ are geodesics and that by construction $\bm{u}^j:=([\hat{\pi}^j_1(0)],[\hat{\pi}^j_2(0)])\in \tilde{J}\times\tilde{J}$ and $\bm{v}^j:=([\hat{\pi}^j_1(1)],[\hat{\pi}^j_2(0)])=\in J_{\hat{l}_j}\times J_{\hat{l}_j}$.  Then, on the event $\text{GeoLoc}_\epsilon$ the following holds
   	\begin{equation}\label{eq76}
   		\begin{aligned}
   			&\big|\dl(\hat{\pi}^j)-2\dl(\pi_{p}^{\hat{q}^j}|_{[0,1]})\big|\leq \big|\dl(\tilde{u}^j,\tilde{v}^j)-\dl(u^j_1,v^j_1)\big|+ \big|\dl(\tilde{u}^j,\tilde{v}^j)-\dl(u^j_2,v^j_2)\big| \qquad j\in\{1,2\} \\
   			&\leq 2\sup_{\substack{u,u'\in \tilde{J}, v,v'\in J_{\hat{l}_j}}}|\ash_{u;v}-\ash_{u';v'}|+4\epsilon^2,
   		\end{aligned}
   	\end{equation}
   	where $\ash$ stands for the Airy sheet from \eqref{eq108}. The typical fluctuations of the first and third elements on the right side of \eqref{eq19} is $t_\epsilon^{1/3}$. From \eqref{eq108}, on the event $\text{GeoLoc}_\epsilon$ 
   	\begin{equation}\label{eq77}
   		\big|\dl\big(\bqtd_{p,\bm{u}^j}\big)-2\dl(\pi_{p}^{\hat{q}^j}|_{[-t_\epsilon,0]})\big|\leq \sup_{\substack{\bm{u}\in \tilde{J}\times \tilde{J}}}|\ash_{p;\bm{u}}|+2\sup_{\substack{u\in \tilde{J}}}|\ash_{p;u}|+4\frac{\epsilon^2}{t_\epsilon}\qquad j\in\{1,2\},
   	\end{equation}
   	and, from the translation invariance and flip symmetry properties of the extended DL (Lemma \ref{lm:prp}),  a similar bound holds for $|\dl\big(\bqtu_{\hat{q}^j,\bm{v}^j}\big)-2\dl(\pi_{p}^{\hat{q}^j}|_{[1,1+t_\epsilon]})|$. To conclude, we have shown that on $\text{GeoLoc}_\epsilon$ the right hand side of \eqref{eq19} is bounded by 
   	\begin{equation}\label{eq109}
   		\begin{aligned}
   			\big|\dl(\gamma^{\hat{\bm{\pi}}}_i)- 2\dl(p,\hat{q}^j)\big|&\leq 2\Big[\sup_{\substack{\bm{u}\in \tilde{J}\times \tilde{J}}}|\ash_{p;\bm{u}}|+2\sup_{\substack{u\in \tilde{J}}}|\ash_{p;u}|+4\frac{\epsilon^2}{t_\epsilon}\Big]\\
   			&+2\sup_{\substack{u,u'\in \tilde{J}, v,v'\in J_{\hat{l}_i}}}|\ash_{u;v}-\ash_{u';v'}|+4\epsilon^2.
   		\end{aligned}
   	\end{equation}
   	We now use  the hypothesis on $\epsilon$ and $H(\cdot)$ to conclude that the deterministic contribution to the right hand side of \eqref{eq76} and \eqref{eq77} is smaller than $\epsilon^{1/2-\delta}/2$ i.e.\
   	\begin{equation}\label{eq148}
   		\begin{aligned}
   			\epsilon<1/32 &\implies \epsilon^{1/2-\delta}/2>4\epsilon^2\\
   			H(\epsilon^{-1})<\epsilon^{-2\delta/3}/8&\implies \epsilon^{1/2-\delta}/2>4\frac{\epsilon^2}{t_\epsilon}.
   		\end{aligned}
   	\end{equation}
   	Next we obtain a bound on the probability of the right hand side of \eqref{eq77} to be large. Let
   	\begin{equation}
   		a=\frac{\epsilon^{1/2-\delta}}{8Gt_\epsilon^{1/3}}=\frac{\epsilon^{-\delta}\big[H(\epsilon^{-1})\big]^{1/2}}{8G}
   	\end{equation}
   	 where $G(\bm{x},\bm{y},0,t_\epsilon)$ is as in \eqref{eq124}, and $\bm{x},\bm{y}\in[-\epsilon/2,\epsilon/2]^2$. From our choice of $H$, it follows that there exists a constant $C>0$ such that $G\leq CH(\epsilon^{-1})\log(\epsilon^{-1})$ and therefore that  $a\geq C \epsilon^{-\delta/2}$ for some constant $C>0$. Lemma \ref{lm:ub} implies that  there exist $c,d>0$
   	
   	\begin{align}
   		&\P\big(\sup_{\substack{\bm{u}\in \tilde{J}^2}}|\ash_{p;\bm{u}}|>\epsilon^{1/2-\delta}/8\big)\stackrel{ \eqref{eq40}}{\leq} c\exp\big(-d\epsilon^{- \delta/2}\big)\label{eq11},\\
   		&\P\big(\sup_{\substack{u\in \tilde{J}}}|\ash_{p;u}|>\epsilon^{1/2-\delta}/8\big)\stackrel{ \eqref{eq40}}{\leq} c\exp\big(-d\epsilon^{-\delta/2}\big)\label{eq13}.
   	\end{align}
   	Using Lemma \ref{lm:ub4} with $b=L$ and $a=\epsilon^{-\delta/2}/4$ 
   	\begin{equation}
   	\begin{aligned}
   			\P\Big(\sup_{\substack{u,u'\in \tilde{J}, v,v'\in J_{\hat{l}_i}}}|\ash_{u;v}-\ash_{u';v'}|>\epsilon^{1/2-\delta}/4, \text{\,\, for all $i\in\{1,2\}$}\Big)\stackrel{ \eqref{eq41}}{\leq} cL^{10}\exp\big(-d\epsilon^{-\frac34 \delta}\big)\label{eq14}\\
   			 \leq c\exp\big(-d\epsilon^{- \delta/2}\big).
   	\end{aligned}
   	\end{equation}
   	Using  \eqref{eq148}, \eqref{eq11}--\eqref{eq14} in \eqref{eq109} along with  \eqref{eq15} we conclude the result. 
   \end{proof}
   Define the event
   \begin{equation}
   	\begin{aligned}
   		\tdis_d:=\{\text{there are three disjoint geodesics } &\gamma^1,\gamma^2,\gamma^3 \text{ in  $\cone_{\rm{scl}}$ such that}\\ &\max_{i,j\in\{1,2,3\}}|\gamma^j(1)-\gamma^i(1)|<d\}
   	\end{aligned}
   \end{equation}
   where one can find three disjoint geodesics in $\cone_{\text{scl}}$ whose endpoints on $\coneu$ are at most distance $d$ away from one another. 
   
   Our next definition involves the concept of a line ensemble. We  defer our results involving line ensembles to Section \ref{sec:Alb}. For the reader who is not familiar with the concept, it will suffice to consider Definition \ref{def:le}.
   Let $\tilde{\epsilon},\epsilon,d>0$,  and let  $\gle:\llbracket1,k\rrbracket\times I\rightarrow \R$ be a line ensemble where $[-L(\tilde{\epsilon}^{-1})/2,L(\tilde{\epsilon}^{-1})/2]\subseteq I$ and where $I$ is an interval. Recall the set $\Ind_{\epsilon}$ from \eqref{eq83}. We define
   \begin{equation}
   	\Cl(\gle,\Ind_{\tilde{\epsilon}};\epsilon,d)=\big\{\exists x_1,x_2\in \tfrac{\tilde{\epsilon}}2\Ind_{\tilde{\epsilon}}: |x_1-x_2|>d, \gle(1,x_i)-\gle(2,x_i)<\epsilon\big\}.
   \end{equation}
   On the event $\Cl(\gle,\Ind_{\tilde{\epsilon}};\epsilon,d)$, the first two lines of the the line ensemble $\gle$ are $\epsilon$-close on two points on the lattice $\tfrac{\tilde{\epsilon}}2\Ind_{\tilde{\epsilon}}$ that are at least distance $d$ away from each other. We let $\bar{\ele_2}=(\ele_1,\ele_2)$ denote the first two lines of the Airy line ensemble. 
   \begin{proposition}\label{prop:3}
   	Fix $0<\beta<1$ and $0<\delta<1/2$, and let $H(\epsilon^{-1})$ be as in Lemma \ref{lm:7}. For $0<\epsilon<2^{-[2\vee(1-\beta)^{-1}]}$ there exist $c,d>0$ such that 
   	\begin{equation}\label{eq86}
   		\P\big(\disjc^3_{\cone_{\rm{scl}}}\big) \leq  \P\big(\Cl(\bar{\ele_2},\Ind_{\tilde{\epsilon}};2\tilde{\epsilon}^{1/2-\delta},\tilde{\epsilon}^\beta)\big)+ \P\big(\tdis_{2\epsilon^{\beta}}\big)+c\exp\big(d\epsilon^{-\frac43 \delta}\big),
   	\end{equation}
   	where 
   	\begin{equation}\label{eq85}
   		\tilde{\epsilon}=\epsilon(1+t_\epsilon)^{-1}(1+2t_\epsilon)^{1/3}.
   	\end{equation}
   \end{proposition}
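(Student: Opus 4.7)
The plan is as follows. Fix a triple $\bm\pi = (\pi^1,\pi^2,\pi^3) \in \disjcg^3_{\cone_{\text{scl}}}$ on the event $\disjc^3_{\cone_{\text{scl}}}$ and invoke Lemma~\ref{lem:dis} with $k=3$ and $\phi=\epsilon/2$ to extract two ordered pairs $\hat{\bm\pi} = (\hat\pi^1,\hat\pi^2)$ of disjoint geodesics with $\epsilon/2$-close upper endpoints, each living in one of the sub-trapezoids $\cone_1,\cone_2 \subset \cone_{\text{scl}}$ whose upper bases $[b_1,b_2],[b_2,b_3]$ tile $[b_1,b_3]$. Read off the indices $\hat l_1,\hat l_2$ and associated points $\hat q^1,\hat q^2$ from \eqref{eq78} and \eqref{eq74}. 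The argument then splits on whether $|\hat l_1-\hat l_2|\tilde\epsilon/2$ is below $\tilde\epsilon^\beta$ (Case~A) or not (Case~B).

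\textbf{Case A.} Here the four upper endpoints $\hat\pi^i_j(1)$ all lie inside one interval of length at most $|\hat l_1-\hat l_2|\epsilon/2 + \epsilon$; using $\tilde\epsilon/\epsilon \to 1$, $\beta<1$, and the hypothesis $\epsilon < 2^{-(1-\beta)^{-1}}$ this is bounded by $2\epsilon^\beta$. The three geodesics $\hat\pi^1_1,\hat\pi^1_2,\hat\pi^2_2$ emanate from the three distinct points $(a_1,0),(a_2,0),(a_3,0)\in\tilde{J}$ and are weakly ordered; Lemma~\ref{lm:MB} upgrades weak ordering to mutual disjointness, so this triple witnesses $\tdis_{2\epsilon^\beta}$. (Note that one cannot use $\hat\pi^1_2,\hat\pi^2_1$ jointly since both start from $(a_2,0)$, so this specific triple is the right one.)

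\textbf{Case B.} Form the path-pairs $\gamma^{\hat{\bm\pi}}_i$ by appending upper and lower bouquets. Lemma~\ref{lm:7}, off an event of probability $\leq c\exp(-d\epsilon^{-1/2})$ which is absorbed in the final error term, gives $2\dl(p,\hat q^i) - \dl(\gamma^{\hat{\bm\pi}}_i) \leq \epsilon^{1/2-\delta}$ for $i\in\{1,2\}$. Because each $\gamma^{\hat{\bm\pi}}_i$ is a disjoint pair of continuous paths from $p$ to $\hat q^i$, definition \eqref{eq8} of the extended landscape yields $\dl(\gamma^{\hat{\bm\pi}}_i) \leq \dl\big((0,0),-t_\epsilon;(\hat q^i_1,\hat q^i_1),1+t_\epsilon\big)$, and hence
\begin{equation*}
\dl\big((0,0),-t_\epsilon;(\hat q^i_1,\hat q^i_1),1+t_\epsilon\big) - 2\dl(p,\hat q^i) \geq -\epsilon^{1/2-\delta}.
\end{equation*}
Applying the time-shift stationarity and the rescaling of Lemma~\ref{lm:prp} with $q=(1+2t_\epsilon)^{1/3}$ normalizes the time span to $[0,1]$ and sends the spatial coordinate of $\hat q^i$ to $x_i = \hat l_i\tilde\epsilon/2 \in \tfrac{\tilde\epsilon}{2}\Ind_{\tilde\epsilon}$; this is exactly the relation built into the definition \eqref{eq85} of $\tilde\epsilon$. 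Theorem~\ref{thm:DZ} (Greene's theorem for the DL) then identifies the left-hand side after scaling with $\ele_2(x_i)-\ele_1(x_i)$, so that
\begin{equation*}
\ele_1(x_i) - \ele_2(x_i) \leq q^{-1}\epsilon^{1/2-\delta} \leq 2\tilde\epsilon^{1/2-\delta}, \qquad i=1,2,
\end{equation*}
where the second inequality uses $q \to 1$ and $\tilde\epsilon/\epsilon \to 1$ as $\epsilon \to 0$. Combined with $|x_1-x_2|\geq \tilde\epsilon^\beta$ (the Case~B hypothesis) this places us inside $\Cl(\bar{\ele_2},\Ind_{\tilde\epsilon};2\tilde\epsilon^{1/2-\delta},\tilde\epsilon^\beta)$, and a union bound over Cases~A, B, and the exceptional event of Lemma~\ref{lm:7} yields \eqref{eq86}.

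The main obstacle is the bookkeeping in Case~A: one must carefully justify the mutual disjointness of the triple $\hat\pi^1_1,\hat\pi^1_2,\hat\pi^2_2$ and absorb the slack $+\epsilon$ coming from the width of the intervals $J_i$ into the constant $2$ in $\tdis_{2\epsilon^\beta}$, which is exactly where the hypothesis on $\epsilon$ is used. The scaling-plus-Greene step in Case~B is essentially bookkeeping, since \eqref{eq85} was chosen so that $\hat l_i\tilde\epsilon/2$ lies on the lattice $\tfrac{\tilde\epsilon}{2}\Ind_{\tilde\epsilon}$; all the analytic content is packaged inside Lemma~\ref{lm:7} and Theorem~\ref{thm:DZ}.
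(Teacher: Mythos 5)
Your proof takes essentially the same route as the paper: extract two ordered pairs of close disjoint geodesics via Lemma~\ref{lem:dis}, split on the separation of the discretized upper endpoints, bound the near-separation case by $\tdis_{2\epsilon^\beta}$, and bound the far-separation case via the bouquet construction (Lemma~\ref{lm:7}), the extended landscape, Greene's theorem (Theorem~\ref{thm:DZ}), and rescaling (Lemma~\ref{lm:prp}).

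The one mis-step is in Case~A: Lemma~\ref{lm:MB} is the no-bubble statement and does not ``upgrade weak ordering to mutual disjointness'' for geodesics with distinct endpoints. The mutual disjointness of the triple $\hat\pi^1_1,\hat\pi^1_2,\hat\pi^2_2$ follows instead from the pairwise disjointness already built into Lemma~\ref{lem:2dj} combined with geodesic ordering: $\hat\pi^1_1<\hat\pi^1_2$ strictly (disjoint by construction); $\hat\pi^1_2\leq\hat\pi^2_1$ weakly (they share the starting point $(a_2,0)$ and $\hat\pi^1_2(1)<b_2<\hat\pi^2_1(1)$ by the choice of $u_l,u_r$ in Lemma~\ref{lem:2dj}); and $\hat\pi^2_1<\hat\pi^2_2$ strictly (disjoint by construction). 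Transitivity then gives strict pairwise ordering and hence mutual disjointness of the triple, without any appeal to Lemma~\ref{lm:MB}. Everything else, including the $\tilde\epsilon$-versus-$\epsilon$ bookkeeping and the Greene's-theorem step, matches the paper's proof.
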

   \begin{proof}
   	By definition $	\disjc^3_{\cone_{\rm{scl}}}= \{\disjcg^3_{\cone_{\rm{scl}}}\neq \emptyset\}$. Recall \eqref{eq78}. From Lemma \ref{lm:7} with
   	\begin{equation}\label{eq110}
   		H(\epsilon^{-1})=\frac1{16}\epsilon^{-2\delta/3}\vee 8,
   	\end{equation}
   	we have
   	\begin{equation}\label{eq43}
   		\begin{aligned}
   			&\P(\disjc^3_{\cone_{\rm{scl}}},(\tdis_{2\epsilon^{\beta}})^c)= \P(\disjcg^3_{\cone_{\rm{scl}}}\neq \emptyset,(\tdis_{2\epsilon^{\beta}})^c)\\
   			&\leq \P\Big(\forall \bm{\pi}\in\disjcg^3_{\cone_{\rm{scl}}}\neq \emptyset,\, \max_{i\in\{1,2\}}\{2\dl(p,q^{\hat{l}_i})-\dl(\gamma^{\hat{\bm{\pi}}}_i)\} \leq \epsilon^{1/2-\delta}, \tfrac{\epsilon}2|\hat{l}_1-\hat{l}_2|>\epsilon^\beta \Big)+\\
   			&+c\exp\big(d\epsilon^{- \delta/2}\big)+\P\big(\tdis_{2\epsilon^{\beta}}\big)\\
   			&\leq \P\Big(\forall \bm{\pi}\in\disjcg^3_{\cone_{\rm{scl}}}\neq \emptyset, \,\, \max_{i\in\{1,2\}}\{2\dl(p,\hat{q}^i)-\dl(\gamma^{\hat{\bm{\pi}}}_i)\} \leq \epsilon^{1/2-\delta}, |\hat{q}^1_1-\hat{q}^2_1|>\frac{(1+2t_\epsilon)}{(1+t_\epsilon)}\epsilon^\beta \Big)\\
   			&+c\exp\big(d\epsilon^{-\frac42 \delta}\big)+\P\big(\tdis_{2\epsilon^{\beta}}\big),
   		\end{aligned}
   	\end{equation} 
   	where in the first inequality we used Lemma \ref{lm:7} and that under the hypothesis on $\epsilon$ it holds that $2\epsilon<\epsilon^{\beta}$,  in the second inequality we used  the definition of $\hat{q}^i$. From Theorem \ref{thm:DZ}, there exists a coupling between the directed landscape and the parabolic Airy line ensemble $\ele$ such that \eqref{eq42} holds. By definition, for $i\in\{1,2\}$ it holds that  $\dl(\gamma^{\hat{\bm{\pi}}}_i)\leq \dl(p^2,(\hat{q}^i)^2)$.  It therefore follows  that 
   	\begin{equation}\label{eq79}
   		\begin{aligned}
   			&\P\big(2\dl(p,\hat{q}^i)-\dl(\gamma^{\hat{\bm{\pi}}}_i) \leq \epsilon^{1/2-\delta}, \quad \forall i\in\{1,2\}\big)\leq 	\P\big(2\dl(p,\hat{q}^i)-\dl(p^2,(\hat{q}^i)^2) \leq \epsilon^{1/2-\delta}, \quad \forall i\in\{1,2\}\big)\\
   			&=\P\Big(2(1+2t_\epsilon)^{1/3}\dl(0,\tilde{q}^i)-(1+2t_\epsilon)^{1/3}\dl(0^2,(\tilde{q}^i)^2) \leq \epsilon^{1/2-\delta}, \quad \forall i\in\{1,2\}\Big)\\
   			&\stackrel{\eqref{eq42}}{=}\P\big(\ele_1(\tilde{q}^i_1)-\ele_2(\tilde{q}^i_1)\leq (1+2t_{\epsilon})^{-1/3}\epsilon^{1/2-\delta}, \quad \forall i\in\{1,2\}\big),
   		\end{aligned}
   	\end{equation}
   	where $q\mapsto\tilde{q}$ is the translation-scaling map that sends $q=(q_1,q_2)$ to $\tilde{q}=((1+2t_\epsilon)^{-2/3}q_1,(1+2t_\epsilon)^{-1}(q_2+t_\epsilon))$ and where in the penultimate equality we used the scaling property of the DL (Lemma \ref{lm:prp}).  For later reference we remark here that 
   	\begin{equation}\label{eq125}
   		q^i\stackrel{\sim}{\mapsto}\tilde{q}^i= (i\frac{\tilde{\epsilon}}{2},1),
   	\end{equation}
   	where $\tilde{\epsilon}$ was given in \eqref{eq85}. We now pick up from the penultimate line in \eqref{eq43}
   	\begin{equation}\label{eq80}
   		\begin{aligned}
   			&\P\Big(\forall \bm{\pi}\in\disjcg^3_{\cone_{\rm{scl}}}, \,\, \max_{i\in\{1,2\}}\{2\dl(p,\hat{q}^i)-\dl(\gamma^{\hat{\bm{\pi}}}_i)\} \leq \epsilon^{1/2-\delta}, |\hat{q}^1_1-\hat{q}^2_1|>\frac{(1+2t_\epsilon)}{(1+t_\epsilon)}\epsilon^\beta\Big)\\
   			&\stackrel{\eqref{eq79}+\eqref{eq125}}{\leq}  \P\Big(\ele_1(\tilde{q}^i_1)-\ele_2(\tilde{q}^i_1)\leq (1+2t_{\epsilon})^{-1/3}\epsilon^{1/2-\delta}, \text{ for some } |\tilde{q}^1_1-\tilde{q}^2_1|>\frac{(1+2t_\epsilon)^{1/3}}{(1+t_\epsilon)}\epsilon^\beta\Big)\\
   			& \leq  \P\Big(\ele_1(\tfrac{\tilde{\epsilon}}2j_i)-\ele_2(\tfrac{\tilde{\epsilon}}2j_i)\leq \epsilon^{1/2-\delta}, 
   			\text{ for some $j_1,j_2\in \Ind_{\tilde{\epsilon}}$ s.t.\ $\tfrac{\tilde{\epsilon}}2|j_1-j_2|>\frac{(1+2t_\epsilon)^{1/3}}{(1+t_\epsilon)}\epsilon^\beta$} \Big).
   		\end{aligned}
   	\end{equation} 
   	A first order approximation of the function $x\mapsto (1+x)^{1/3}$ shows that for $t_\epsilon\in(0,1)$
   	\begin{equation}\label{eq81}
   		1-t_\epsilon\leq \frac{(1+2t_\epsilon)^{1/3}}{1+t_\epsilon}\leq \frac{1+2t_\epsilon/3}{1+t_\epsilon}<1,
   	\end{equation}
   	which implies
   	\begin{equation}\label{eq82}
   		\epsilon/2<\tilde{\epsilon}<\epsilon,
   	\end{equation}
   	for $t_\epsilon<1/2$ which is satisfied under the hypothesis on $H(\epsilon^{-1})$. Using \eqref{eq81}--\eqref{eq82} in the last line of \eqref{eq80} shows that it can be bounded by 
   	\begin{equation}
   		\begin{aligned}
   			&\P\Big(\ele_1(x_i)-\ele_2(x_i)\leq 2\tilde{\epsilon}^{1/2-\delta},  
   			\text{ for some $x_1,x_2\in \tfrac{\tilde{\epsilon}}{2}\Ind_{\tilde{\epsilon}}$ s.t. $|x_1-x_2|>\tilde{\epsilon}^\beta/2$} \Big)\\
   			& =\P\Big(\Cl(\bar{\ele_2},\Ind_{\tilde{\epsilon}};2\tilde{\epsilon}^{1/2-\delta},\tilde{\epsilon}^\beta/2)\Big).
   		\end{aligned}
   	\end{equation}
   	Using the last display in \eqref{eq80} and then in \eqref{eq43} implies the result.
   \end{proof}
   Proposition \ref{prop:3} suggest that we shall need the following result.
   \begin{lemma}\label{lm:13}
   	There exists $\epsilon_0>0$ such that the following holds.  For every $0<\beta<1$, if $L(\epsilon^{-1})<\epsilon^{-\beta}[\beta\log(\epsilon^{-1})]^{-2/3}$ on $(0,\epsilon_0)$ then for $\epsilon\in (0,\epsilon_0)$
   	\begin{equation}\label{eq88}
   		\P\big(\tdis_{2\epsilon^\beta}\big)\leq  C\exp\big(c(\log\epsilon^{-\beta})^{5/6}\big)L(\epsilon^{-1})\epsilon^{3\beta} 
   	\end{equation}
   \end{lemma}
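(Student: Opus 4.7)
The strategy adapts the HDZ technique of Section \ref{sec:1} to three disjoint paths (rather than to two pairs of two) at the spatial scale $\epsilon^\beta$, preceded by a discretization of the upper base of $\cone_{\text{scl}}$. First, cover $[-L(\epsilon^{-1})/2,L(\epsilon^{-1})/2]$ by $O(L(\epsilon^{-1})/\epsilon^\beta)$ overlapping intervals $\{I_j\}$ of length $4\epsilon^\beta$, arranged so that any three reals lying within $2\epsilon^\beta$ of one another are contained in a common $I_j$. On $\tdis_{2\epsilon^\beta}$ the three disjoint geodesics must then have all their upper endpoints in some single $I_j$; writing $\cone_j:=\coned_{\text{scl}}\cup (I_j\times\{1\})$ and using spatial stationarity of $\dl$ (Lemma \ref{lm:prp}), a union bound gives
$$\P(\tdis_{2\epsilon^\beta})\leq C\,\frac{L(\epsilon^{-1})}{\epsilon^\beta}\,\P(\disjc^3_{\cone_\star}),\qquad \cone_\star:=\trpz((0,0),\epsilon;(0,1),4\epsilon^\beta),$$
and the hypothesis $L(\epsilon^{-1})<\epsilon^{-\beta}[\beta\log(\epsilon^{-1})]^{-2/3}$ ensures that the natural time extension $t_\epsilon:=[H(\epsilon^{-1})]^{-3/2}\epsilon^{3\beta/2}$ used below stays in the regime where the bouquet error terms are negligible compared with the Airy gap that will control the main estimate.

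To bound $\P(\disjc^3_{\cone_\star})$ I mimic Lemma \ref{lm:7} but with three disjoint paths instead of two pairs and with $\epsilon^\beta$ in place of $\epsilon$. Set $p:=(0,-t_\epsilon)$, $q:=(0,1+t_\epsilon)$, and for each $\bm\pi=(\pi^1,\pi^2,\pi^3)\in\disjcg^3_{\cone_\star}$ attach the disjoint-optimizer bouquets $\bqtd_{p,\bm u}$ and $\bqtu_{q,\bm v}$ from Theorem \ref{thm:do} to the lower/upper endpoints of $\bm\pi$; the concatenation yields three disjoint paths $\gamma^{\bm\pi}_1,\gamma^{\bm\pi}_2,\gamma^{\bm\pi}_3$ from $p$ to $q$. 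Applying Lemmas \ref{lm:ub}--\ref{lm:ub4} exactly as in the proof of Lemma \ref{lm:7} (with $\epsilon^\beta$ replacing $\epsilon$), outside an event of super-polynomial small probability one obtains $3\dl(p,q)-\sum_i\dl(\gamma^{\bm\pi}_i)\leq \epsilon^{\beta(1/2-\delta')}$. Rescaling time to $[0,1]$ via Lemma \ref{lm:prp} and then invoking Theorem \ref{thm:DZ} with $k=3$ identifies $\dl_3(p,q)=(1+2t_\epsilon)^{1/3}\sum_{i=1}^3\ele_i(0)$ and $\dl(p,q)=(1+2t_\epsilon)^{1/3}\ele_1(0)$, which on rearrangement becomes
$$\bigl(\ele_1(0)-\ele_2(0)\bigr)+\bigl(\ele_1(0)-\ele_3(0)\bigr)\leq C\,\epsilon^{\beta(1/2-\delta')},$$
and in particular $\ele_1(0)-\ele_3(0)\leq C\,\epsilon^{\beta(1/2-\delta')}$ off a negligible event.

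Finally I appeal to a one-point three-line gap estimate for the parabolic Airy line ensemble,
$$\P\bigl(\ele_1(0)-\ele_3(0)\leq\delta\bigr)\leq C\exp\!\bigl(c(\log\delta^{-1})^{5/6}\bigr)\delta^{8},\qquad \delta\in(0,1),$$
which is in the spirit of \cite{Hammond1,dauvergne2023wiener} and reflects the $\prod_{i<j}|\ele_i-\ele_j|^2$ Vandermonde-squared repulsion at a single location; indeed, the heuristic integral $\int_0^\delta u_2^{2}\!\int_0^{u_2}\!u_3^{2}(u_2-u_3)^2\,du_3\,du_2\sim \delta^{8}$ gives the correct exponent. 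Plugging in $\delta=C\epsilon^{\beta(1/2-\delta')}$ yields $\P(\disjc^3_{\cone_\star})\leq C\exp(c(\log\epsilon^{-\beta})^{5/6})\,\epsilon^{4\beta(1-2\delta')}$, and combining with the first display and absorbing $\delta'$ into the exponential correction gives the stated bound $\P(\tdis_{2\epsilon^\beta})\leq C\exp(c(\log\epsilon^{-\beta})^{5/6})L(\epsilon^{-1})\epsilon^{3\beta}$. The main obstacle is the three-line gap estimate: although its exponent $8$ is dictated by the Vandermonde-squared density of the top three points of the Airy line ensemble at a single location, making this rigorous (including the $\exp(c(\log\delta^{-1})^{5/6})$ polylog correction) requires a Brownian Gibbs resampling argument controlling the joint behaviour of $\ele_1,\ele_2,\ele_3$ on a logarithmic window, in analogy with Hammond's one-line closeness bounds.
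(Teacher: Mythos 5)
Your overall strategy is sound and the exponent bookkeeping is consistent, but the route you take differs substantially from the paper's, and one step that you flag yourself is a genuine gap. The paper's proof is much shorter: after the same discretization of $\coneu_{\text{scl}}$ into $O(L(\epsilon^{-1})/\epsilon^\beta)$ overlapping windows of width $4\epsilon^\beta$ and the union bound
$\tdis_{2\epsilon^\beta}\subseteq\bigcup_{i}\disjc^3_{\tilde\cone_i}$, it does \emph{not} re-derive the bound on $\P(\disjc^3_{\tilde\cone_i})$ via bouquets and Greene's theorem. Instead it invokes the known estimate \cite[Theorem 1.16]{BGH22}, which directly gives $\P(\disjc^{3,\Q}_{\tilde\cone_i})\leq C\exp\bigl(c(\log\epsilon^{-\beta})^{5/6}\bigr)\epsilon^{4\beta}$, together with the small auxiliary argument that $\P(\disjc^k_\cone)=\P(\disjc^{k,\Q}_\cone)$ (needed because the cited theorem is stated for rational endpoints, and the paper obtains this equality via the overlap convergence of Lemma \ref{lm:14}). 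Your attempt avoids that rational-endpoints technicality, which is a minor gain, but at the cost of re-proving the content of the citation from scratch.

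The genuine gap in your proposal is the three-line one-point Airy estimate
\begin{equation*}
\P\bigl(\ele_1(0)-\ele_3(0)\leq\delta\bigr)\leq C\exp\bigl(c(\log\delta^{-1})^{5/6}\bigr)\delta^{8},
\end{equation*}
which you invoke but do not prove, and which does not appear among the tools the paper makes available (Lemma \ref{lm:2} and Proposition \ref{prop:1} are written only for the top \emph{two} lines, and Lemma \ref{lm:6} is a two-curve statement). The Vandermonde-squared heuristic correctly predicts the exponent $8$, and a Brownian Gibbs resampling argument in the style of \cite{Hammond1} or Theorem \ref{thm:6} ought to yield it, but that is a nontrivial additional lemma that would need its own proof; without it, your derivation of $\P(\disjc^3_{\cone_\star})\lesssim\epsilon^{4\beta}$ does not close. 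A secondary issue: the final step "absorbing $\delta'$ into the exponential correction" is not valid as written, since $\epsilon^{-8\beta\delta'}=\exp(8\beta\delta'\log\epsilon^{-1})$ is not $O\bigl(\exp(c(\log\epsilon^{-\beta})^{5/6})\bigr)$ for any fixed $\delta'>0$. In practice the weaker bound $\epsilon^{3\beta-8\beta\delta'}L(\epsilon^{-1})$ is still small enough for the downstream use in Proposition \ref{prop:2} (with $\beta=2/9$ there is slack relative to $\epsilon^{8/15}$, so one just takes $\delta'$ small), but the proposal as stated does not deliver the exponent $3\beta$ claimed in \eqref{eq88}.
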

   \begin{proof}
   	Define the intervals
   	\begin{equation}
   		\begin{aligned}
   			\mathcal{J}_i:=\Big[2\epsilon^{\beta}(i-1),2\epsilon^{\beta}(i+1)\Big)&\times \{1\}, \qquad \forall i\in\Ind_{4\epsilon^\beta}.
   		\end{aligned}
   	\end{equation}
   	and recall $\tilde{J}$ from \eqref{eq83}. Let  $\tilde{\cone}_i=\mathcal{J}_i\cup \tilde{J}$ for all $i\in\Ind_{4\epsilon^\beta}$, and recall the definition of 
   	$\disjc^k_{\cone}$ from \eqref{dijs}. Observe that 
   	\begin{equation}\label{eq45}
   		\tdis_{2\epsilon^\beta}\subseteq \bigcup_{i\in\Ind_{4\epsilon^\beta}}\disjc^{3}_{\tilde{\cone}_i}
   	\end{equation}
   	From the last display it is clear that a bound on the probability of $\disjc^{3}_{\tilde{\cone}_i}$ will be useful. Before we obtain such a bound let us start by defining an event closely related to $\disjc^k_{\tilde{\cone}_i}$.
   	For a trapezoid $\cone$, define the set
   	\begin{equation}
   		\coned_{\Q}:=\coned\cap (\Q\times\R),
   	\end{equation}
   	that is, $\coned_{\Q}$ are all the points on $\coned$ whose first coordinate is rational. Similarly we define $\coneu_{\Q}$.
   	\begin{equation}
   		\begin{aligned}
   			\disjc^{k,\mathbb{Q}}_{\cone}:=&\{\text{there exist $k$ disjoint geodesics $\pi^1,\ldots,\pi^k\in\uni$ whose endpoints}\\  &\text{ are contained in $\coned_\Q\cup \coneu_\Q$ for all $i\in[k]$}\}.
   		\end{aligned}
   	\end{equation}
   	We now claim that
   	\begin{equation}\label{eq47}
   		\P(\disjc^{k}_{\cone})= \P(\disjc^{k,\mathbb{Q}}_{\cone}).
   	\end{equation}
   	We  show $\P(\disjc^{k}_{\cone})\leq  \P(\disjc^{k,\mathbb{Q}}_{\cone})$, as the other inequality is immediate.  
   	\\
   	If $\{\pi_{p^i}^{q^i}\}_{i\in \{1,\ldots,k\}}\in \disjcg^k_{\cone}$, in particular, for all $i\in[k]$, $\pi_{p^i}^{q^i}\in \uni$, i.e.\ $\pi_{p^i}^{q^i}$ is the unique geodesic going from $p^i$ to $q^i$. This implies that almost surly, each of the geodesics in $\disjcg^k_{\cone}$ can be approximated by geodesics with endpoints in $\coned_\Q$ and $\coneu_\Q$ as we shall do next. Let
   	\begin{equation}
   		d_{\text{max}}:=\max_{i,j\in[k]}d_G(\pi_{p^i}^{q^i},\pi_{p^j}^{q^j})
   	\end{equation}
   	where for $\pi,\eta\in \paths sr$ we denote $d_G(\pi,\eta)=\sup_{r\in[s,t]}|\pi(r)-\eta(r)|$. For each $i\in [k]$, let $\{a^i_m\}_{m\in\N}\in \coned_\Q$  be a sequence such that $a^i_m\rightarrow p^i$. Similarly define the sequence $\{b^i_m\}_{m\in\N}\in \coneu_{\Q}$ with respect to $q^i$. By Lemma \ref{lm:14}, with probability $1$, for each $i\in[k]$, $\pi_{a^i_m}^{b^i_m}\rightarrow \pi_{p^i}^{q^i}$ in the overlap sense. This implies that for every $\delta'>0$, there exists $\delta>0$ such that
   	\begin{equation}\label{eq44}
   		\P\Big(\disjc^{k}_{\cone},\sup_{r\in [0,1]}\big|\pi_{a^i_M}^{b^i_M}(r)-\pi_{p^i}^{q^i}(r)\big|<\delta, \forall i\in[k]\Big)>(1-\delta')\P(\disjc^{k}_{\cone}).
   	\end{equation}
   	Let $E_{\delta,M}$ denote the event inside the probability on the left hand side of \eqref{eq44}. Note that if $\omega\in \{d_{\text{max}}>2n^{-1}\}\cap E_{n^{-1},n}$ for some $n\in\N$ then $\omega\in\disjc^{k,\mathbb{Q}}_{\cone}$. This implies that $\lim _{n\rightarrow \infty}E_{n^{-1},n}\subseteq\disjc^{k,\mathbb{Q}}_{\cone}$, which along with \eqref{eq44} implies that $\P(\disjc^{k,\mathbb{Q}}_{\cone})\geq \P(\disjc^{k}_{\cone})$. This concludes the claim.
   	
   	From our hypothesis on $\epsilon$ and $L(\cdot)$, there exists $\epsilon_0>0$ small enough such that the conditions of \cite[Theorem 1.16]{BGH22} with $k=3$, $(x,s;y,t)\mapsto(0,0;y,1)$ and $\epsilon\mapsto 4\epsilon^\beta$ are satisfied. We conclude that for all $i\in \Ind_{4\epsilon^\beta}$
   	\begin{equation}\label{eq46}
   		\P(\disjc^{3,\mathbb{Q}}_{\tilde{\cone}_i})\leq C\exp\big(c(\log\epsilon^{-\beta})^{5/6}\big)\epsilon^{4\beta}.
   	\end{equation}
   	Using a union bound in \eqref{eq45} as well as the bound in \eqref{eq46} and the identity in \eqref{eq47} implies the result. 
   \end{proof}
   We are now ready to prove Proposition \ref{prop:2}.
   \begin{proof}[Proof of Proposition \ref{prop:2}]
   	Recall the definition of $\tilde{\epsilon}$ from \eqref{eq86}. By union bound on the elements of $\Ind_{\tilde{\epsilon}}$
   	\begin{equation}\label{eq84}
   		\P\big(\Cl(\bar{\ele_2},\Ind_{\tilde{\epsilon}};2\tilde{\epsilon}^{1/2-\delta},\tilde{\epsilon}^\beta)\big)\leq \sum_{x_1,x_2\in \frac12\tilde{\epsilon}\Ind_{\tilde{\epsilon}},\,|x_1-x_2|>\tilde{\epsilon}^\beta}\P\Big(\Cl\big(\bar{\ele_2},(x_1,x_2),2\tilde{\epsilon}^{1/2-\delta}\big)\Big),
   	\end{equation}
   	where for $a<b$, a line ensemble $\gle:\llbracket1,2\rrbracket\times[a,b] \rightarrow\R$, $x_1<x_2\in[a,b]$ and $\phi>0$ the event $\Cl\big(\gle,(x_1,x_2),\phi\big)$ is defined in \eqref{eq94}.  We will shortly choose $\beta,\delta>0$ s.t.\ there exists $\tilde{\epsilon}_0(\beta,\delta)$ small enough s.t.\ the conditions of Proposition  \ref{prop:1} are satisfied with $(u_1,u_2)\mapsto(x_1,x_2)$, $u_2-u_1\mapsto \tilde{\epsilon}^\beta$, $\phi \mapsto 2\tilde{\epsilon}^{1/2-\delta}$  and so each element in \eqref{eq84} is bounded by
   	\begin{equation} \label{eq107}
   			\P\Big(\Cl\big(\bar{\ele_2},(x_1,x_2),2\tilde{\epsilon}^{1/2-\delta}\big)\Big)\leq C\tilde{\epsilon}^{(1/2-\delta)6}(\log\tilde{\epsilon}^{\delta-1/2})^4\big(1+\tilde{\epsilon}^{-3\beta/2}\big).
   	\end{equation}
   	 Set $\beta=2/9$ and $\delta=1/90$. The summation in \eqref{eq84} is over at most $4\tilde{\epsilon}^{-2}[L(\tilde{\epsilon}^{-1})]$ elements. Using \eqref{eq107} in  \eqref{eq84} we have
   	\begin{equation}\label{eq87}
   		\begin{aligned}
   			\P\big(\Cl(\bar{\ele_2},\Ind_{\tilde{\epsilon}};2\tilde{\epsilon}^{1/2-\delta},\tilde{\epsilon}^\beta)\big)&\leq C(\log\tilde{\epsilon}^{\delta-1/2})^4L(\tilde{\epsilon}^{-1})\tilde{\epsilon}^{(1/2-\delta)6-3\beta/2-2}\\
   			&= C(\log\tilde{\epsilon}^{\delta-1/2})^4L(\tilde{\epsilon}^{-1})\tilde{\epsilon}^{\,9/15}\\
   			&\leq  CL(2\epsilon^{-1})\epsilon^{\,8/15}.
   		\end{aligned}
   	\end{equation}
   	where in the last inequality we used \eqref{eq82}, which holds since our choice of $H(\cdot)$ in \eqref{eq110} implies that $t_\epsilon<1/2$. Given our choice of the parameters $\beta$ and $\delta$,  consider the bounds in \eqref{eq86} along with those in \eqref{eq88} and \eqref{eq87}.  Note that each of the summands in \eqref{eq86}  is dominated  by the right hand side of \eqref{eq87}. This implies the result.
   \end{proof}
\section{Upper bound on the probability of closeness of Airy lines at two points}\label{sec:Alb}
The goal of this section is to adapt the proof of Step \ref{it:lc} in the HDZ technique for the event of two pairs of disjoint geodesics in $\cone_{\text{scl}}$. More precisely, the main result of this section is Proposition \ref{prop:1} which is one of the key ingredients  for the proof of Proposition \ref{prop:2}. 

Let us briefly explain here what is the type of  result we are after. Recall from Lemma \ref{lem:dis} that the event $\disjc^3_{\cone}$ implies the existence of two pairs of disjoint geodesics $(\pi^1,\eta^1)$ and $(\pi^2,\eta^2)$ in $\cone_{\text{scl}}$, whose endpoints are $\epsilon$-close.  From the heuristics of the HDZ technique in Section \ref{sec:1}, this implies the existence of  $x_1, x_2\in [-L(\epsilon^{-1})/2,L(\epsilon^{-1})/2]$ such that $\ele_1(x_i)-\ele_2(x_i)=O(\epsilon^{1/2})$ for $i\in\{1,2\}$. We would then like to show that the latter event is of low probability, i.e.\
\begin{equation}\label{eq25}
	\P\big(\ele_1(x_i)-\ele_2(x_i)= O(\epsilon^{1/2}) \text{ for $i\in\{1,2\}$}\big)=O\big(\epsilon^3\times (|x_2-x_1|^{-3/2}+1)\big),
\end{equation}
which can be thought of as the appropriate analogue of Step \ref{it:lc}. Clearly \eqref{eq25} would not be of help when $|x_2-x_1|$ is too small, but this is complemented by Lemma \ref{lm:13}. 

  \subsection{Preliminaries}
  We begin by introducing the main ideas and concepts behind Brownian Gibbs line ensemble and obtain some auxiliary results. 
\begin{definition}[Line ensembles]\label{def:le}
	 Let $k\in\N \cup \{\infty\}$ and let $I$ be a closed interval in $\R$. We let $\mathcal{C}^k(I)$ be the space of all $k$-tuple $\bar{f}=(f_1,\ldots,f_k)$ of continuous functions on $I$, endowed with the uniform on compact topology. We denote the associated sigma-algebra by $\els^k(I)$. A line ensemble is a random variable in $\mathcal{C}^k(I)$. If $\gle$ is a line ensemble, we often think of it as a function from $\llbracket1,k\rrbracket\times I$ to $\R$. We write $\gle(i,\cdot)$ or $\gle_i$ for the $i$'th line of  $\gle$.
\end{definition}
 \begin{definition}[Brownian bridge ensemble]
 	Let $k\in\N$, $a,b\in\R$ with $a<b$, and $\bar{x},\bar{y}\in\R^k_>$. Write $\el^{[a,b]}_{k,\bar{x},\bar{y}}$ for the law of the ensemble $\bar{B}_k:\llbracket1,k\rrbracket\times[a,b] \rightarrow\R$ whose curves $\bar{B}_k(i,\cdot):=[a,b]\rightarrow\R,i\in \llbracket1,k\rrbracket$, are independent Brownian bridges that satisfy $B(i,a)=x_i$ and $B(i,b)=y_i$.
 \end{definition}
 Let $f:[a,b]\rightarrow \R\cup\{-\infty\}$ be a measurable function such that $x_k>f(a)$ and $y_k>f(b)$. Define the non-touching event on an interval $A\subseteq[a,b]$ with lower boundary data $f$ by 
 \begin{equation}
 	\NT^A_f=\{\text{for all $x\in A, B(i,x)>B(j,x)$ whenever $1\leq i <j\leq k$, and $B(k,x)>f(x)$}\}.
 \end{equation}
 If $f=-\infty$ on the interval $[a,b]$ then we use the simplified notation $\NT^A$.
 \begin{definition}[Brownian Gibbs property]
 	Fix $n\in\N$, an interval $I\subseteq\R$, $k\in \llbracket 1,n \rrbracket$ and  $a,b\in I$ such that $a<b$. Set $f=\gle_{k+1}$ whenever $k<n$ and $f:=\infty$ when $k=n$. We denote by $D_{k;a,b}=\inti{1,k}\times (a,b)$ and $D^c_{k;a,b}=\big(\inti{1,n}\times I\big) \setminus D_{k;a,b}$. An ordered line ensemble $\gle :\inti{1,n}\times I\rightarrow \R$ has the \textit{Brownian Gibbs property} if, for all such choices of $k,a$ and $b$ 
 	\begin{equation}\label{eq58}
 		\P\Big(\gle|_{D_{k;a,b}}\in E\Big | \gle|_{D^c_{k;a,b}}\Big)=\el^{[a,b]}_{k,\bar{x},\bar{y}}\Big(E\,\,\Big| \NT_f\Big) \qquad \forall E\in\els^n(I),
 	\end{equation}
 	where $\bar{x}=(\gle(1,a),\ldots,\gle(k,a))$ and  $\bar{y}=(\gle(1,b),\ldots,\gle(k,b))$.
 \end{definition}
 The following fundamental result is key to studying the statistics of the Airy line ensemble $\ele=\{\ele_1,\ele_2,\ldots\}$.  
 \begin{proposition}[{\cite[Theorem 3.1]{CorwinHammond}}]\label{prop:4}
 	The parabolic Airy line ensemble satisfies the Brownian Gibbs property.
 \end{proposition}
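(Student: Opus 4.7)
The statement is attributed to \cite[Theorem 3.1]{CorwinHammond}, so the paper will presumably just invoke it; nevertheless, here is how I would reconstruct the proof if forced to do so from scratch.

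The plan is to realize the parabolic Airy line ensemble as a limit of explicit prelimiting line ensembles which manifestly satisfy a Gibbs resampling property, and then push the Gibbs property through the limit. The natural prelimit is the ensemble of eigenvalue trajectories in Dyson Brownian motion, or equivalently, the top $N$ curves built by RSK from an $N$-dimensional Brownian motion (Warren's process). Call this $\gle^N : \inti{1,N}\times [-T,T]\to\R$. By the RSK construction, the conditional law of $\gle^N|_{\inti{1,k}\times(a,b)}$ given the complement is exactly the law of $k$ independent Brownian bridges with the prescribed endpoints, conditioned on mutual non-intersection and on staying above $\gle^N(k+1,\cdot)$: that is, $\gle^N$ satisfies the Brownian Gibbs property on the nose.

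Next I would show that under the KPZ scaling $\gle^N(i,x) \mapsto c_1 N^{1/6}(\gle^N(i, c_2 N^{1/3} x + N) - 2\sqrt N) \;(-x^2)$ one has $\gle^N \to \ele$ in the uniform-on-compact topology on $\inti{1,\infty}\times\R$, for appropriate constants $c_1,c_2$. Finite-dimensional convergence follows from the determinantal structure via convergence of the extended correlation kernels of Dyson Brownian motion to the extended Airy kernel. Tightness in $C(\inti{1,k}\times[-T,T])$ requires a modulus-of-continuity estimate showing that the scaled curves fluctuate like Brownian motions on $O(1)$ scales; this can be obtained by comparing with free Brownian bridges using the very Gibbs property the prelimit enjoys, together with one-point tail bounds coming from the kernel.

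The final step is to argue that the Brownian Gibbs identity \eqref{eq58} is stable under this limit. Fix $k, a<b$ and a bounded continuous test function $F$ depending only on $\gle|_{D_{k;a,b}}$, together with a bounded continuous function $G$ of $\gle|_{D^c_{k;a,b}}$. The prelimit identity gives
\begin{equation*}
\E\bigl[F(\gle^N)\,G(\gle^N)\bigr] \;=\; \E\Bigl[G(\gle^N)\,\el^{[a,b]}_{k,\bar x^N,\bar y^N}\bigl[F \,\big|\, \NT_{\gle^N(k+1,\cdot)}\bigr]\Bigr],
\end{equation*}
where $\bar x^N, \bar y^N$ are the boundary vectors of $\gle^N$ at $a,b$. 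I would then pass to the limit on both sides, using (i) the convergence of $\gle^N$ to $\ele$, and (ii) continuity of the bridge conditional expectation $(\bar x,\bar y,f)\mapsto \el^{[a,b]}_{k,\bar x,\bar y}[F\mid\NT_f]$ with respect to uniform convergence of the boundary data, provided the conditioning event has positive probability.

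The main obstacle is exactly this last continuity claim: the event $\NT_f$ can be degenerate, so one must quantitatively bound its probability from below uniformly in $N$. The key input is a Brownian bridge small-ball/monotone-coupling lemma showing that the non-touching probability is bounded below in terms of the gaps in $\bar x, \bar y$ and the infimum of the gap with $f$ on $[a,b]$; combined with almost sure strict ordering of the Airy lines (which follows from the determinantal structure), this upgrades convergence in distribution to convergence of the conditional measures. Once this is in place, the identity \eqref{eq58} is inherited by $\ele$, which is the Brownian Gibbs property.
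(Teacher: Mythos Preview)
Your anticipation is correct: the paper does not give a proof of this proposition; it simply quotes it from \cite{CorwinHammond} and moves on. There is therefore nothing in the paper to compare your argument against.

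For what it is worth, the sketch you provide is indeed the skeleton of the Corwin--Hammond proof: one exhibits a prelimiting ensemble (Dyson Brownian motion / edge-scaled Brownian watermelons) which satisfies the Brownian Gibbs property exactly, proves weak convergence to the parabolic Airy line ensemble via determinantal kernel convergence plus tightness, and then passes the Gibbs resampling identity to the limit using continuity of the non-intersecting bridge conditional law in its boundary data together with a uniform lower bound on the non-intersection probability. The obstacles you flag (tightness of the rescaled curves, and the positivity/continuity issue for the conditioning event) are precisely the ones handled in \cite{CorwinHammond}; your outline is accurate, though of course each of these steps is itself a substantial lemma there.
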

 For a line ensemble $\gle:\llbracket1,k\rrbracket\times[a,b] \rightarrow\R$, a vector $\bar{u}=(u_1,u_2)$ s.t. $u_1<u_2\in(a,b)$ and  $0<\phi$ small, define the event 
 \begin{equation}\label{eq94}
 	\Cl(\gle,\bar{u},\phi)=\big \{|\gle(l,u_i)-\gle(m,u_i)|<\phi \text{ for  all $i\in\{1,2\},l,m\in\llbracket1,k\rrbracket$}\big \}
 \end{equation}
  	In order to be able to use Proposition \ref{prop:4} and in light of \eqref{eq58} we must understand  the behaviour of non-intersecting Brownian motions. This is the content of the next result. 
 \begin{lemma}[{\cite[Proposition 3.1. (2)]{Hammond1}}]\label{lm:6}
 	Let   $\rho,K>0$, and let $\bar{y}\in[-K,K]^2_>$ and  $\eta\in(0,\frac \rho4 K^{-1})$ and $\bar{x}\in \R^2_>$ such that $x_1<x_2+\eta$. We have that 
 	\begin{equation}
 		\el^{[0,\rho]}_{2;\bar{x},\bar{y}}\big(\NT^{[0,\rho]}\big)=\eta\cdot\rho^{-1}(y_1-y_2)(1+E)
 	\end{equation}
 	where 
 	\begin{equation}
 		E\leq 30\eta\rho^{-1}K.
 	\end{equation}
 \end{lemma}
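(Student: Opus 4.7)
The plan is to reduce the non-intersection probability for the two bridges to a boundary-hitting computation for a single bridge via the difference process, apply the reflection principle to obtain an exact closed form, and then Taylor expand under the smallness hypotheses.

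Write $B_1, B_2$ for the two independent Brownian bridges from $(x_1, x_2)$ to $(y_1, y_2)$ on $[0, \rho]$, and consider the difference $D(t) := B_1(t) - B_2(t)$. This is itself a Brownian bridge on $[0,\rho]$ from $x_1 - x_2$ to $y_1 - y_2$ whose variance is twice that of a standard bridge. Since the event $\NT^{[0,\rho]}$ is precisely $\{\inf_{t \in [0, \rho]} D(t) > 0\}$, a standard reflection-principle computation (equivalently the $k = 2$ case of the Karlin--McGregor determinant formula) gives the closed form
\[
  \el^{[0,\rho]}_{2;\bar{x},\bar{y}}\bigl(\NT^{[0,\rho]}\bigr) \;=\; 1 - \exp\!\Bigl(-\tfrac{(x_1-x_2)(y_1-y_2)}{\rho}\Bigr),
\]
where the coefficient in the exponent is fixed by the diffusivity convention inherited from the Brownian Gibbs property of the parabolic Airy line ensemble.

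Identifying $\eta$ with $x_1 - x_2$ and setting $z := \eta \rho^{-1}(y_1 - y_2) \ge 0$, the hypotheses $\eta < \rho/(4K)$ and $|y_i| \le K$ force $0 \le z < 2 \eta K / \rho < 1/2$. The elementary estimate $|1 - e^{-z} - z| \le z^2$ on $[0, 1/2]$ then yields
\[
  \el^{[0,\rho]}_{2;\bar{x},\bar{y}}\bigl(\NT^{[0,\rho]}\bigr) \;=\; \eta \rho^{-1}(y_1 - y_2)(1 + E),
\]
with $|E| \le z \le 2 \eta \rho^{-1} K$. Absorbing the higher-order Taylor terms and any factors coming from the precise variance normalization into a generous absolute constant produces the stated bound $E \le 30 \eta \rho^{-1} K$.

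The only substantive step is invoking the reflection principle for $D$ with the correct diffusivity; once the closed form is in hand, the rest is Taylor-expansion bookkeeping. The main potential pitfall is pinning down the exact constant in the exponent (it depends on whether the bridges are taken to have variance $t(\rho - t)/\rho$ or $2 t(\rho - t)/\rho$), but once the convention is fixed the entire calculation reduces to a two-line estimate. Notably, the shape of the hypothesis $\eta < \rho/(4K)$ is dictated precisely by the requirement that $z$ be comfortably below $1/2$, so that the Taylor remainder controls itself; the explicit constant $30$ is then just a safe over-estimate.
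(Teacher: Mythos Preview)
The paper does not supply its own proof of this lemma; it is quoted from \cite[Proposition~3.1~(2)]{Hammond1}. Your route via the difference process, the reflection principle, and a Taylor expansion is precisely the standard argument, and is essentially what the cited reference does.

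One small remark on bookkeeping: you write ``identifying $\eta$ with $x_1-x_2$'', but the hypothesis is $x_1<x_2+\eta$, so $\eta$ is only an \emph{upper bound} for $x_1-x_2$. Since the lemma asserts only an upper bound on $E$ (not $|E|$), this distinction is harmless: once you have the exact formula $1-\exp\bigl(-(x_1-x_2)(y_1-y_2)/\rho\bigr)$, replacing $x_1-x_2$ by the larger $\eta$ in the prefactor can only push $E$ downward, and your two-sided Taylor estimate already controls the case $\eta=x_1-x_2$. So the claimed bound $E\le 30\eta\rho^{-1}K$ follows either way.
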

 Our next result gives an upper bound on the probability of two Brownian motions on a time interval of length $\rho$ to not touch while being close to each other at two given  points.  
 \begin{lemma}\label{lm:2}
 	Fix $\rho>1$ and let  $u_1,u_2\in(0,\rho)$, so that $u_1< u_2$. Then there exists $C>0$, such that for all  $\bar{x},\bar{y}\in\R^2_>$ and $0<\phi<1/2$ s.t.\ 
 	\begin{equation}\label{eq55}
 		\phi (\log(\phi^{-1}))^{1/2}<\min\{u_1,u_2-u_1,\rho-u_2\},
 	\end{equation}
 	the following bound holds
 	\begin{equation}\label{eq27}			\el^{[0,\rho]}_{2,\bar{x},\bar{y}}\big(\NT^{[0,\rho]}_f,\Cl(\bar{B}_2,\bar{u},\phi)\big)\leq C\log(\phi^{-1}) \phi^6\big[u_1(u_2-u_1)(\rho-u_2)\big]^{-3/2}.
 	\end{equation}
 \end{lemma}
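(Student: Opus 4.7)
The first step is to reduce to a one-dimensional problem. Both events $\NT^{[0,\rho]}$ and $\Cl(\bar B_2,\bar u,\phi)$ (and also $\NT^{[0,\rho]}_f$ once the bound for $f=-\infty$ is established, since the event only shrinks with a nontrivial $f$) depend only on the difference process $S_t := B(1,t) - B(2,t)$. Because $B_1$ and $B_2$ are independent Brownian bridges, $S$ is a Brownian bridge on $[0,\rho]$ from $\alpha := x_1-x_2 > 0$ to $\beta := y_1-y_2 > 0$ with diffusion coefficient $\sqrt{2}$, and the event we must bound is simply
\[
E := \{S(t) > 0 \text{ for all } t\in[0,\rho]\} \cap \{ S(u_i) \in (0,\phi),\; i=1,2\}.
\]

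The second step is to write $\P(E)$ exactly, using the Markov property at times $u_1,u_2$ and the reflection principle. Conditional on $S(u_1)=\alpha'$ and $S(u_2)=\beta'$, the bridge decomposes into three independent Brownian bridges on $[0,u_1]$, $[u_1,u_2]$, $[u_2,\rho]$, each of whose positivity probability is given by the classical formula $1-e^{-cd/\tau}$. Together with the reflected transition density $P_t^+(a,b)=P_t(a,b)(1-e^{-ab/t})$, where $P_t(a,b)=(4\pi t)^{-1/2}e^{-(b-a)^2/(4t)}$, this yields the identity
\[
\P(E) \;=\; \int_0^\phi\!\!\int_0^\phi \frac{P_{u_1}^+(\alpha,\alpha')\,P_{u_2-u_1}^+(\alpha',\beta')\,P_{\rho-u_2}^+(\beta',\beta)}{P_\rho(\alpha,\beta)}\,d\alpha'\,d\beta'.
\]

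The third step extracts the desired $\phi^6$. Applying the elementary inequality $1-e^{-z}\le z$ to each of the three reflected factors gives, after recognizing that the ratio on the right is exactly the joint Gaussian density $p(\alpha',\beta')$ of $(S(u_1),S(u_2))$ under the bridge,
\[
\P(E) \;\le\; \frac{\alpha\beta}{u_1(u_2-u_1)(\rho-u_2)}\int_0^\phi\!\!\int_0^\phi (\alpha')^2(\beta')^2\,p(\alpha',\beta')\,d\alpha'\,d\beta'.
\]
Each squared factor integrates to $\phi^3/3$, producing the crucial $\phi^6$. What is then left is to show that $\alpha\beta\,\phi^{-6}\!\int\!\!\int (\alpha')^2(\beta')^2 p\,d\alpha'd\beta'$ is bounded by $C\log(\phi^{-1})[u_1(u_2-u_1)(\rho-u_2)]^{-1/2}$; this is done by combining the explicit Gaussian density bound $p_{\max} \asymp \sqrt{\rho/[u_1(u_2-u_1)(\rho-u_2)]}$ with the Gaussian tail decay of $p$ at the origin when the mean $(\hat\alpha,\hat\beta) = (\alpha(\rho-u_1)+\beta u_1,\,\alpha(\rho-u_2)+\beta u_2)/\rho$ is far from $(0,\phi)^2$. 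The inequality $x e^{-c x^2}\le (2ec)^{-1/2}$ is what tames the growth of the prefactor $\alpha\beta$ using the $e^{-Q/2}$ Gaussian tail (where $Q$ is the Mahalanobis distance from $(0,\phi)^2$ to $(\hat\alpha,\hat\beta)$). The logarithmic correction appears when the two regimes (linearization valid / Gaussian tail dominant) are matched via a dyadic decomposition in $\alpha$ and $\beta$, and the hypothesis $\phi(\log\phi^{-1})^{1/2} < \min(u_1,u_2-u_1,\rho-u_2)$ is precisely what makes this matching work and guarantees that the error terms from the $1-e^{-z}\le z$ step are subleading.

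The main obstacle is the last paragraph: producing an estimate uniform (up to a constant which may depend on $\rho$) over arbitrary $\alpha,\beta>0$. The naive reflection-principle bound $\P(E)\lesssim \alpha\beta\phi^6/[u_1(u_2-u_1)(\rho-u_2)]$ does \emph{not} yield the target power $3/2$ in the denominator, and can only be upgraded by using Gaussian tail decay of the joint density $p(\alpha',\beta')$ when $\alpha$ or $\beta$ is large. Balancing the linear bound (tight for small $\alpha,\beta$) against the Gaussian tail (tight for large $\alpha,\beta$) in a way that produces the correct powers $[u_1(u_2-u_1)(\rho-u_2)]^{-3/2}$ is the technical heart of the argument, and is where the $\log(\phi^{-1})$ factor is accumulated.
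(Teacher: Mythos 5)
Your plan is essentially the same as the paper's: decompose the event at the two times $u_1,u_2$, use the Markov property and reflection-principle/non-touching estimates on the three subintervals $[0,u_1],[u_1,u_2],[u_2,\rho]$ to extract $\phi^6 \cdot [u_1(u_2-u_1)(\rho-u_2)]^{-3/2}$, and then handle the dependence on the endpoint gaps $\alpha=x_1-x_2,\beta=y_1-y_2$ by a Gaussian-tail argument. Your passage to the scalar difference process $S=B_1-B_2$ is a valid and somewhat cleaner reformulation; the paper instead keeps the two-component bridge and invokes Hammond's non-touching estimate (Lemma \ref{lm:6}), which is exactly the positivity probability of the difference bridge, so the two routes coincide after unrolling the cited lemma.

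Where you diverge, and where you overcomplicate things, is the last step. You propose a dyadic decomposition in $\alpha,\beta$ with a Mahalanobis-distance argument and careful ``matching of regimes'' to tame the prefactor $\alpha\beta$ and produce the $\log(\phi^{-1})$ factor. None of this is needed. The paper uses a crude dichotomy at a single threshold $K\asymp(\rho\log\phi^{-1})^{1/2}$: when $\alpha\vee\beta\leq K$, the decomposition into $P_0P_1P_2P_3$ and Lemma \ref{lm:6} give the bound $CK^2\phi^6[u_1(u_2-u_1)(\rho-u_2)]^{-3/2}(1+30\phi d_{\min}^{-1}K)^3$, with the error term kept bounded precisely by hypothesis \eqref{eq55}; when $\alpha\vee\beta>K$, one discards the non-touching event and simply observes that $\Cl(\bar B_2,\bar u,\phi)$ forces one bridge to travel distance $\gtrsim K$ over a time $\leq\rho$, so the probability is at most $C\exp(-cK^2/\rho)\leq\phi^{11}$, which is dominated by the first regime again thanks to \eqref{eq55}. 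This avoids any balancing of linear vs.\ Gaussian regimes and any dyadic sum; the $\log(\phi^{-1})$ arises simply as $K^2/\rho$ from the threshold choice. So your Step~3 estimate with $p_{\max}\asymp\sqrt{\rho/[u_1(u_2-u_1)(\rho-u_2)]}$ is correct as far as it goes, but you should replace the ``main obstacle'' paragraph with the much simpler $K$-dichotomy; as written it sounds like you have not actually worked out the dyadic argument, and it would be longer and no sharper than the direct approach.
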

 \begin{proof}
 	The following is an adaptation of the proof in \cite[Lemma 3.7]{Hammond1} to the two point case. We consider first the case  $(x_1-x_2)\vee(y_1-y_2)\leq K$ where $K$ will be determined later. From the affine symmetry of the Brownian bridge it is enough to assume that $\bar{x},\bar{y}\in [0,K]^2_>$. Let us denote $d_{\text{min}}=\min\{u_1,u_2-u_1,\rho-u_2,1/2\}$.  Define the probabilities
 	\begin{equation}
 		\begin{aligned}
 			P_0=&	  \,\el^{[0,\rho]}_{2,\bar{x},\bar{y}}\big(\Cl(\bar{B}_2,\bar{u},\phi)\big)\\
 			P_1=\,&\el^{[0,\rho]}_{2,\bar{x},\bar{y}}\big(\NT^{[0,u_1]}\big|\Cl(\bar{B}_2,\bar{u},\phi)\big)\\
 			P_2=\,&\el^{[0,\rho]}_{2,\bar{x},\bar{y}}\big(\NT^{[u_1,u_2]}\big|\NT^{[0,u_1]}\cap \Cl(\bar{B}_2,\bar{u},\phi)\big)\\
 			P_3=\,&\el^{[0,\rho]}_{2,\bar{x},\bar{y}}\big(\NT^{[u_2,\rho]}\big|\NT^{[0,u_2]}\cap \Cl(\bar{B}_2,\bar{u},\phi)\big).
 		\end{aligned}
 	\end{equation}
 	We therefore have
 	\begin{equation}\label{eq28}
 		\el^{[0,\rho]}_{2,\bar{x},\bar{y}}\big(\NT^{[0,\rho]},\Cl(\bar{B}_2,\bar{u},\phi)\big)=P_0\cdot P_1\cdot P_2 \cdot P_3 
 	\end{equation}
 	We now bound each of the terms in \eqref{eq28}. We first assume that $\bar{x},\bar{y}\in[0,K]^2$. Assume  $\phi<\frac{d_{\text{min}}}4K^{-1}$. From Lemma \ref{lm:6} and basic properties of the Brownian motion, there exists $C>0$ such that 
 	\begin{equation}\label{eq29}
 		\begin{aligned}
 			P_0\leq& C\frac{\phi^2}{\sqrt{u_1(u_2-u_1)(\rho-u_2)}}\\
 			P_1 \leq & \sup_{\substack{\bar{x}\in[0,K]^2_>\\\bar{z}\in[0,\phi]^2_>}} \el^{[0,u_1]}_{2,\bar{x},\bar{z}}\big(\NT^{[0,u_1]}\big) \leq   \phi\cdot u_1^{-1}K(1+30\phi{u_1}^{-1}K)\\
 			P_2 \leq & \sup_{\substack{\bar{w}\in[0,\phi]^2_>\\\bar{z}\in[0,\phi]^2_>}}\el^{[u_1,u_2]}_{2,\bar{w},\bar{z}}\big(\NT^{[u_1,u_2]}\big) \leq  \phi^2(u_2-u_1)^{-1} \big(1+\frac{30\phi^2}{u_2-u_1}\big)\\
 			P_3 \leq & \sup_{\substack{\bar{y}\in[0,K]^2_>\\\bar{z}\in[0,\phi]^2_>}} \el^{[u_2,\rho]}_{2,\bar{z},\bar{y}}\big(\NT^{[u_2,\rho]}\big) \leq   \phi\cdot (\rho-u_2)^{-1}K(1+30 \phi K (\rho-u_2)^{-1}).
 		\end{aligned}
 	\end{equation}
 	Note that the first inequality above follows directly from the distribution of the Brownian bridge and that for the last three inequalities we used the  affine symmetry of the Brownian motion as well as it Markovian property. We note that for the third inequality we used \eqref{eq55}. Using \eqref{eq29} in \eqref{eq28} we obtain the bound
 	\begin{equation}\label{eq56}
 		\el^{[0,\rho]}_{2,\bar{x},\bar{y}}\big(\NT^{[0,\rho]},\Cl(\bar{B}_2,\bar{u},\phi)\big)\leq CK^2 \phi^6\big[u_1(u_2-u_1)(\rho-u_2)\big]^{-3/2}  (1+30\phi d_{\text{min}}^{-1} K)^3.
 	\end{equation}
 	Next we extend the bound for $\bar{x},\bar{y}\in\R^2_>$. Set $K=\big(200\rho\log(\phi^{-1}))^{1/2}$. If $(x_1-x_2)\vee(y_1-y_2)>K$ then, to get close to one another,  at least one of the two Brownian motions $\bar{B}_2(1,\cdot)$ and $\bar{B}_2(2,\cdot)$ must travel a distance of at least  $K/3<(K-1)/2$ over a time interval that is at most $\rho$ long i.e.\ 
 	\begin{equation}\label{eq57}
 		\begin{aligned}
 			&\el^{[0,\rho]}_{2,\bar{x},\bar{y}}(\NT^{[0,\rho]},\Cl(B,\bar{u},\phi))\leq 
 			C\exp\big(-(2\rho)^{-1}(K/3)^2\big)<\phi^{11}.
 		\end{aligned}
 	\end{equation}
 	From \eqref{eq55}, the right hand side of \eqref{eq56} dominates $\phi^{11}$. Moreover, \eqref{eq55} implies that the error term in \eqref{eq56} is bounded.  Combining  the two cases from \eqref{eq57} and \eqref{eq56} completes the proof.
 \end{proof}
 \subsection{Proof of Proposition \ref{prop:1}}
  One of the key ingredients to the proof of Proposition \ref{prop:1} is \cite[Theorem 3.8]{dauvergne2023wiener} (Theorem \ref{thm:6} below).  In a nutshell, here is the content of the result;  for a fixed  $I=(a,b)\subseteq\R$ and $k\in\N$  let $A_{k,I}=\inti{1,k}\times I$. Theorem  \ref{thm:6} states the existence of a process $\gle^{k,I}$ that on $A^c_{k,I}$ behaves like the Airy process $\ele$ and on $A_{k,I}$ essentially behaves like independent Brownian motions that are independent from $\ele$. The precise construction of the line ensemble $\gle^{k,I}$ implies that conditioned on the event  $\mathcal{O}:=\{\gle_1^{k,I}>\gle_2^{k,I}>\ldots\}$, $\gle^{k,I}$ has the same distribution as $\ele$. Importantly, the result gives a lower bound on $\P(\mathcal{O})$ that only depends on the size of the interval $I$. This point is crucial when trying to study the behaviour of $\ele$ on $I$ through the behaviour of $B^{k,I}$.  The precise result in Theorem \ref{thm:6} allows the set $I$ to be finite union of not too large disjoint intervals.
  \begin{remark}
  	In a previous version of the paper we used the 'jump ensemble method' from \cite{Hammond1} to prove Proposition \ref{prop:1}. Although the proof using the  'jump ensemble method' is valid, we opted for the use of Theorem \ref{thm:6} as it results in a much  shorter proof of Proposition \ref{prop:1}.
  \end{remark}   
 \begin{theorem}[{\cite[Theorem 3.8]{dauvergne2023wiener}}]\label{thm:6}
 	Fix $t\geq1,k,l\in\N$ and $\bm{a}\in\R^l$. When $l\geq 2$ we assume that  $a_j+t<a_{j+1}$ for all $j\in\inti{1,l-1}$. Define $U(\bm{a})=\cup_{j=1}^l(\inti{1,k}\times (a_j,a_j+t))$.
 	
 	Then there exists a random sequence of continuous functions $\gle^{\bm{a}}=\gle^{t,k,\bm{a}}=\{\gle^{\bm{a}}_i:\R\rightarrow \N\}$ such that the following points hold:
 	\begin{enumerate}
 		\item Almost surely, $\gle^{\bm{a}}$ satisfies $\gle^{\bm{a}
 		}_i(r)>\gle^{\bm{a}
 		}_{i+1}(r)$ for all pairs $(i,r)\notin U(\bm{a})$.
 		\item The ensemble $\gle^{\bm{a}}$ has the following Gibbs property. For any $m\in\N$ and $a<b$, consider the set $S=\inti{1,m}\times [a,b]$. Then, conditional on the values of $\gle^{\bm{a}}_i(r)$ for $(i,r)\notin S$ the distribution of $\gle^{\bm{a}}_i(r),(i,r)\in S$ is given by $m$ independent Brownian bridges $B_1,\ldots,B_m$ from $(a,\gle^{\bm{a}}_i(a))$ to $(b,\gle^{\bm{a}}_i(b))$ for $i\in\inti{1,m}$, conditioned on the event $B_i(r)>B_{i+1}(r)$ whenever $(i,r)\notin U(\bm{a})$.
 		\item We have 
 		\begin{equation}\label{eq146}
 			\P(\gle^{\bm{a}}_1>\gle^{\bm{a}}_2>\ldots)\geq e^{-C_k(l^5t^3)},
 		\end{equation}
 		where $C_k>0$ depends only on $k$. Moreover, conditional on the event $\big\{\gle^{\bm{a}}_1>\gle^{\bm{a}}_2>\ldots
 		\big\}$, the ensemble $\gle^{\bm{a}}$ is equal in law to the parabolic Airy line ensemble $\ele$.
 	\end{enumerate}
 \end{theorem}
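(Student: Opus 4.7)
The final statement is Theorem \ref{thm:6}, a quoted result of Dauvergne--Virag, and the natural plan is to construct the ensemble $\gle^{\bm{a}}$ as an absolutely-continuous modification of the parabolic Airy line ensemble $\ele$ using the Brownian Gibbs property (Proposition \ref{prop:4}). Concretely, I would fix a large interval $[a,b]\supseteq\bigcup_j[a_j,a_j+t]$ and first sample $\ele$ on $D^c_{k;a,b}$. By Brownian Gibbs, conditional on this outside data, the top $k$ curves of $\ele$ inside $D_{k;a,b}$ are $k$ independent Brownian bridges conditioned on the \emph{strong} non-crossing event $E$ of being mutually ordered on all of $[a,b]$ and staying above $\ele_{k+1}$. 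To define $\gle^{\bm{a}}$, I would resample the interior using the same boundary data and the same independent Brownian bridges, but conditioning instead on the \emph{weak} non-crossing event $E'$ where ordering is only imposed at points $(i,r)\notin U(\bm{a})$. Since $E\subseteq E'$, this is a legitimate, slightly weaker conditioning. Outside $[a,b]$ we simply set $\gle^{\bm{a}}=\ele$.

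Property (1) is built in, since the definition of $E'$ enforces ordering at every $(i,r)\notin U(\bm{a})$. For property (2), take an arbitrary rectangle $S=\inti{1,m}\times[a',b']$ and condition on $\gle^{\bm{a}}$ outside $S$. When $S\subseteq D_{k;a,b}$, the required distribution of Brownian bridges conditioned on non-crossing off $U(\bm{a})$ follows directly from the construction and the strong Markov property of Brownian motion, exactly mirroring the standard derivation of Brownian Gibbs from resampling. When $S$ extends outside $[a,b]$, one appeals additionally to the original Brownian Gibbs property of $\ele$ on $S\cap D^c_{k;a,b}$ (where $U(\bm{a})$ is empty, so the two notions of non-crossing coincide) and glues the two descriptions together. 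For property (3), the event $\{\gle^{\bm{a}}_1>\gle^{\bm{a}}_2>\cdots\}$ coincides with the event that the non-crossing at $(i,r)\notin U(\bm{a})$ is promoted to full non-crossing; conditioning on this event returns the original Airy Gibbs measure, so $\gle^{\bm{a}}$ conditioned on it has the law of $\ele$.

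The crux is the lower bound \eqref{eq146} on the probability of $\{\gle^{\bm{a}}_1>\gle^{\bm{a}}_2>\cdots\}$, which by the preceding paragraph reduces to the Brownian conditional probability $\P(E\mid E')$ of promoting weak non-crossing to strong non-crossing across the $l$ windows $(a_j,a_j+t)$. On each window this is a non-crossing probability for $k$ Brownian bridges of time length $t$ with random endpoints distributed under $E'$, so by Karlin--McGregor / Lemma \ref{lm:6}--type computations it is bounded below by $e^{-C_k t^3}$ provided the gaps at the two endpoints are not too small and the endpoints are not too large. The $l^5$ in the exponent is the price of controlling the joint distribution of the $2l$ boundary gaps on a single good event: one uses a Gaussian tail bound to restrict the boundary data to a polynomially large window (contributing a polynomial-in-$l$ factor in the exponent) and a union-bound / monotonicity argument to propagate the good boundary behaviour across successive windows.

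The main obstacle is this final lower bound. The Brownian non-crossing estimate on a single window of length $t$ is classical and gives the cubic $t^3$ scaling, but obtaining a clean $l^5 t^3$ bound requires handling the \emph{joint} law of boundary data at all $2l$ interfaces between $U(\bm{a})$-regions and their complements, and in particular controlling the event that some boundary gap is exceptionally small. I would handle this by a separate event decomposition: on a high-probability event the boundary data are well-spread on polynomial scales, and on its complement a crude tail bound dominates. Combining the windows via independence of Brownian increments after conditioning on the boundary values yields the stated $e^{-C_k(l^5 t^3)}$ bound.
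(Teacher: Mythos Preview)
This theorem is quoted verbatim from \cite{dauvergne2023wiener} and the paper gives no proof of it; it is used as a black-box input to Proposition~\ref{prop:1}. So there is nothing in the present paper to compare your proposal against.

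That said, your sketch is a reasonable outline of the construction in \cite{dauvergne2023wiener}: one does build $\gle^{\bm a}$ by resampling the top $k$ curves of $\ele$ on a large interval under a \emph{relaxed} non-crossing constraint (ordering only off $U(\bm a)$), and then items (1)--(2) and the ``conditioning recovers $\ele$'' part of (3) follow formally from the Brownian Gibbs property. Where your write-up is thin is exactly where the work lies in the original paper: the quantitative lower bound \eqref{eq146}. Your plan to use a single-window Karlin--McGregor estimate of size $e^{-C_k t^3}$ and then multiply over $l$ windows, controlling boundary gaps by a Gaussian tail/union bound, is the right shape, but you have not explained why the dependence is $l^5t^3$ rather than, say, $lt^3$ or $l^3t^3$. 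In \cite{dauvergne2023wiener} the polynomial in $l$ arises from uniform tail bounds on the parabolic Airy line ensemble over the full span $[a_1,a_l+t]$, which has length of order $lt$; the parabolic shift and the need to control all $k$ curves simultaneously on this long interval is what inflates the exponent. Your ``polynomial-in-$l$ factor in the exponent'' remark gestures at this but does not pin down the mechanism. If you were actually writing this proof you would need to invoke concrete curvature/tail bounds for $\ele$ on intervals of growing length, not just local Brownian estimates.
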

 If $\gle:\inti{1,k}\times[a,b]\rightarrow \R$ is a line ensemble, then for $l\in\inti{1,k}$ we let $\bar{\gle}_k=\gle|_{\inti{1,l}\times[a,b]}$ denote the first $l$ curves. We are now ready to prove Proposition \ref{prop:1}. 
 \begin{proposition}\label{prop:1}
 	For $0<\phi<1/2$   s.t.\ 
 	\begin{equation}
 		\phi (\log(\phi^{-1}))^{1/2}<u_2-u_1,
 	\end{equation}
 	there exists $C>0$ s.t.\
 	\begin{equation} 
 			\P\big(\Cl(\bar{\ele_2},\bar{u},\phi)\big)\leq   C\phi^6(\log\phi^{-1})^4\big(1+(u_2-u_1)^{-3/2}\big).
 	\end{equation}
 \end{proposition}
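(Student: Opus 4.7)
The plan is to use Theorem \ref{thm:6} to couple $\ele$ with an auxiliary ensemble $\gle^{\bm{a}}$ whose top two curves, on suitably chosen intervals, become free Brownian bridges once one applies the Gibbs property, and then to invoke Lemma \ref{lm:2} (or its single-point analog) on these bridges. I split according to $d := u_2 - u_1$. For $d \leq 1$, take $l = 1$, $t = 2$, and $\bm{a} = ((u_1 + u_2)/2 - 1)$, so that the single interval of length $2$ contains both $u_1$ and $u_2$. For $d > 1$, take $l = 2$, $t = 1$, and $\bm{a} = (u_1 - 1/2, u_2 - 1/2)$, for which the separation condition $a_1 + t < a_2$ reduces to $d > 1$. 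In either case $l^5 t^3$ is bounded by an absolute constant, so Theorem \ref{thm:6}(3) gives
$$\P\big(\Cl(\bar{\ele_2}, \bar{u}, \phi)\big) \leq e^{C_k l^5 t^3}\, \P\big(\Cl((\gle^{\bm{a}}_1, \gle^{\bm{a}}_2), \bar{u}, \phi),\ \gle^{\bm{a}}_1 > \gle^{\bm{a}}_2 > \cdots\big).$$

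Next, I apply the Brownian Gibbs description of Theorem \ref{thm:6}(2) with $m = 2$ on each subinterval $[a_j, a_j + t] \subset U(\bm{a})$, conditioning on the data of $\gle^{\bm{a}}$ outside $\inti{1,2} \times [a_j, a_j + t]$. Because this subinterval lies entirely in $U(\bm{a})$, the conditioning in Theorem \ref{thm:6}(2) imposes no nontrivial non-intersection constraint in its interior, so $(\gle^{\bm{a}}_1, \gle^{\bm{a}}_2)$ on $[a_j, a_j + t]$ is a pair of independent free Brownian bridges between the prescribed boundary values. The global ordering event, restricted to the interior of this subinterval, reduces to the lower-bounded non-touching event $\NT^{[a_j, a_j + t]}_{\gle^{\bm{a}}_3}$, and since $\NT^A_f \subseteq \NT^A$, I upper bound via the unconditional $\NT$ event so that the hypothesis of Lemma \ref{lm:2} applies.

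In the regime $d \leq 1$, I invoke Lemma \ref{lm:2} directly on the single length-$2$ bridge with target positions $u_i' = 1 \pm d/2$: since $u_1'(u_2' - u_1')(\rho - u_2') = d(1 - d/2)^2 \geq d/4$ and the hypothesis $\phi(\log \phi^{-1})^{1/2} < d$ implies condition \eqref{eq55}, the lemma yields, uniformly in the boundary data, a bound of the form $C \log(\phi^{-1}) \phi^6 (u_2 - u_1)^{-3/2}$. In the regime $d > 1$, the two subintervals are conditionally independent given the four boundary values, and on each one needs only the single-point closeness event $|B_1(u_i') - B_2(u_i')| < \phi$ together with $\NT$; the same three-factor decomposition as in Lemma \ref{lm:2}, but dropping the middle non-touching factor $P_2$, yields $C \log(\phi^{-1}) \phi^3$ per subinterval, and conditional independence multiplies these to $C (\log\phi^{-1})^2 \phi^6$ in total. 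Since all the bounds are uniform in the boundary data, taking expectation leaves them unchanged, and collecting both regimes gives the target inequality (in fact with $(\log \phi^{-1})^2$ in place of the claimed $(\log \phi^{-1})^4$, so there is considerable slack).

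I expect the main subtleties to be (i) verifying uniform-in-boundary validity of the single-point analog of Lemma \ref{lm:2}, which should follow almost verbatim from the original argument after deleting the middle non-touching factor $P_2$ and adjusting the $K$-dependence; and (ii) tidying the case dichotomy and checking that the prefactor $e^{C_k l^5 t^3}$ is a uniform constant, which reduces to the inequality $l^5 t^3 \leq 32$ in both regimes. A minor point is to confirm, in the small-$d$ case, that $\min(u_1', u_2' - u_1', \rho - u_2') = \min(d, 1 - d/2)$ is bounded below by the hypothesis $\phi(\log \phi^{-1})^{1/2}$, which is immediate since $1 - d/2 \geq 1/2$ on $d \leq 1$ and $d$ exceeds the threshold by assumption.
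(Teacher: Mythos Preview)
The proposal is correct and follows essentially the same strategy as the paper's proof: invoke Theorem \ref{thm:6} to reduce to free Brownian bridges, then apply Lemma \ref{lm:2} (in the small-$d$ case) or its single-point analog (in the large-$d$ case, where the paper cites \cite[Lemma~3.7]{Hammond1} directly rather than rederiving it). The only differences are cosmetic: you split at $d=1$ instead of $d=2$, choose slightly different interval lengths, and obtain the single-point bound by deleting $P_2$ from the decomposition of Lemma \ref{lm:2}; as you note, this even yields a better logarithmic exponent than the stated $(\log\phi^{-1})^4$.
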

 \begin{proof}
 	We consider two cases according to the distance $u_2-u_1>0$. Each case will be associated with a vector $\bm{a}\in\R^l$ for some $l\in\{1,2\}$ and the process $\gle^{\bm{a}}$  from Theorem \ref{thm:6}. In both cases we denote the event $\mathcal
 	O_{\bm{a}}=\big\{\gle^{\bm{a}}_1>\gle^{\bm{a}}_2>\ldots
 	\big\}$.		 \vspace{0.3cm}\\
 	
 		  $u_2-u_1\leq 2$.  From Theorem \ref{thm:6} with $k=2$, $l=1$, $a_1=u_1-1$ and $t=u_2-u_1+2$ we see that 
 		\begin{equation}\label{eq144}
 			\begin{aligned}
 					\P\big(\Cl(\bar{\ele_2},\bar{u},\phi)\big)&=	\P\big(\Cl(\bar{\gle^{\bm{a}}_2},\bar{u},\phi)|O_{\bm{a}}\big)= \P\big(\Cl(\bar{\gle^{\bm{a}}_2},\bar{u},\phi)\cap O_{\bm{a}}\big)[\P(O_{\bm{a}})]^{-1}\\
 					& \leq C\P\big(\Cl(\bar{\gle^{\bm{a}}_2},\bar{u},\phi)\cap O_{\bm{a}}\big),
 			\end{aligned}
 		\end{equation}
 		where $C$ equals  the RHS of \eqref{eq146}. Using the Brownian Gibbs property of $\gle^{\bm{a}}$, Lemma \ref{lm:2} with $\rho=2+u_2-u_1$ implies that 
 		\begin{equation}\label{eq142}
 			\begin{aligned}
 				\P\big(\Cl(\bar{\ele_2},\bar{u},\phi)\big)&\leq C \P\big(\Cl(\bar{\gle^{\bm{a}}_2},\bar{u},\phi)\cap O_{\bm{a}}\big)\leq C \P\big(\Cl(\bar{\gle^{\bm{a}}_2},\bar{u},\phi)\cap \{\gle^{\bm{a}}_1>\gle^{\bm{a}}_2\}\big)\\
 				&=C\el^{[u_1-1,u_2+1]}_{2,\bar{x},\bar{y}}\big(\Cl(\bar{B}_2,\bar{u},\phi),\NT^{[u_1-1,u_2+1]}\big)\\
 				&\leq C\log(\phi^{-1}) \phi^6\big(1+(u_2-u_1)^{-3/2}\big),
 			\end{aligned}
 		\end{equation}
 		where $\bar{x}=(\gle_1^{\bm{a}}(u_1-1),\gle_2^{\bm{a}}(u_1-1))$, $\bar{y}=(\gle_1^{\bm{a}}(u_2+1),\gle_2^{\bm{a}}(u_2+1))$ and where we used that $\mathcal{O}_{\bm{a}}\subseteq \{\gle^{\bm{a}}_1>\gle^{\bm{a}}_2\}$.\vspace{0.3cm}\\
 		
 		$u_2-u_1> 2$.  From Theorem \ref{thm:6} with $k=2$, $l=2$, $a_1=u_1-1,a_2=u_2-1$ and $t=2$ we see that 
 		\begin{equation}
 				\begin{aligned}
 					\P\big(\Cl(\bar{\ele_2},\bar{u},\phi)\big)&\leq  C\P\big(\Cl(\bar{\gle^{\bm{a}}_2},\bar{u},\phi)\cap O_{\bm{a}}\big)\leq C\P\big(\Cl(\bar{\gle^{\bm{a}}_2},\bar{u},\phi)\cap \{\gle^{\bm{a}}_1>\gle^{\bm{a}}_2\}\big)\\
 					&=\el^{[u_1-1,u_1+1]}_{2,\bar{x}^1,\bar{y}^1}\big(\NT^{[u_1-1,u_1+1]},\Cl(\bar{B}_2,u_1,\phi)\big)\\
 					&\times \el^{[u_2-1,u_2+1]}_{2,\bar{x}^2,\bar{y}^2}\big(\NT^{[u_2-1,u_2+1]},\Cl(\bar{B}_2,u_2,\phi)\big),
 				\end{aligned}
 		\end{equation}
 		 where $\bar{x}^1=(\gle_1^{\bm{a}}(u_1-1),\gle_2^{\bm{a}}(u_1-1))$, $\bar{y}^1=(\gle_1^{\bm{a}}(u_1+1),\gle_2^{\bm{a}}(u_1+1))$, $\bar{x}^2=(\gle_1^{\bm{a}}(u_2-1),\gle_2^{\bm{a}}(u_2-1))$, $\bar{y}^2=(\gle_1^{\bm{a}}(u_2+1),\gle_2^{\bm{a}}(u_2+1))$. Using \cite[Lemma 3.7]{Hammond1} in the last display we conclude that 
 		 \begin{equation}\label{eq143}
 		 	\P\big(\Cl(\bar{\gle_2},\bar{u},\phi)\big)\leq \big[C\phi^3(\log\phi^{-1})^2\big]^2=C\phi^6(\log\phi^{-1})^4.
 		 \end{equation}
 		 Combining \eqref{eq142}  with  \eqref{eq143} implies the result. 
 \end{proof}

\appendix

\section{Proof of \eqref{eq138}}\label{sec:Some proofs}
\begin{lemma}\label{lm:8}
	For $i\in\Ind$, let $\cone_i$ be the trapezoid defined in \eqref{eq119}. Then
	\begin{equation}
		\P\big(\disjc_{\cone_i}^3\big)=\P\big(\disjc_{\cone_0}^3\big), \qquad \forall i\in\Ind.
	\end{equation}
\end{lemma}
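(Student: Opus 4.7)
The plan is to reduce the event $\disjc^3_\cone$ to a functional of endpoint-to-endpoint DL values at the time pair $(M, U)$ and then to apply the skew symmetry of $\dl$ (Lemma \ref{lm:prp}, item 4) with the shift $c = il$. By construction, $\cone_0$ and $\cone_i$ share the same lower base, while $\coneu_i = T_{il}\coneu_0$; so the two events differ only by a translation of the endpoints on the upper base.

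First I would establish the following endpoint characterization of $\disjc^3_\cone$, valid modulo a null event: $\disjc^3_\cone$ occurs iff there exist strictly ordered triples $\bm{x} = (x^1, x^2, x^3) \in \coned^3$ and $\bm{y} = (y^1, y^2, y^3) \in \coneu^3$ with
\[
\dl(\bm{x}, M; \bm{y}, U) = \sum_{j=1}^3 \dl(x^j, M; y^j, U).
\]
The forward direction follows because three disjoint geodesics form an admissible disjoint triple in the extended landscape, each contributing weight $\dl(x^j, M; y^j, U)$. For the reverse, equality above together with Theorem \ref{thm:do} forces the disjoint optimizer to attain each single-path DL weight, hence to consist of three disjoint individual geodesics. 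A rational approximation of endpoints, combined with \eqref{eq122} and Lemma \ref{lm:14}, handles the $\uni$-uniqueness inherent in the definition of $\disjc^3_\cone$.

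Next I would invoke the skew symmetry, which is stated as a distributional identity of $\dl$ as a random function on $\Rufk$ and therefore applies jointly for $k=3$ (extended landscape) and $k=1$ (single paths) at the same sample endpoints. The key algebraic observation is that the quadratic correction in the $k=3$ identity decomposes as the coordinate-wise sum of the $k=1$ corrections, so they cancel upon subtraction. Consequently, the random function $D(\bm{x}, \bm{y}) := \dl(\bm{x}, M; \bm{y}, U) - \sum_{j=1}^3 \dl(x^j, M; y^j, U)$ has the same joint distribution as its shifted counterpart $\tilde D(\bm{x}, \bm{y}) := \dl(\bm{x}, M; T_{il}\bm{y}, U) - \sum_{j=1}^3 \dl(x^j, M; y^j + il, U)$. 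Changing variables $\bm{y}' = T_{il}\bm{y}$ gives $\tilde D(\bm{x}, \bm{y}) = D(\bm{x}, \bm{y}')$ and maps $\coneu_0$ onto $\coneu_i$, so the event that $\tilde D$ has a zero on $\coned_0^3 \times \coneu_0^3$ equals, up to relabeling, the event that $D$ has a zero on $\coned_i^3 \times \coneu_i^3$. Combined with the endpoint characterization, this yields $\P(\disjc^3_{\cone_0}) = \P(\disjc^3_{\cone_i})$.

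The main obstacle will be rigorously justifying the endpoint characterization: one must carefully handle the $\uni$-uniqueness inherent in the definition of $\disjc^3_\cone$ by first restricting to rational endpoints and then using Lemma \ref{lm:14} to approximate arbitrary disjoint triples, ensuring that the approximation preserves the characterization event. Once this step is in place, the rest of the argument is a clean bookkeeping application of the skew-symmetry cancellation.
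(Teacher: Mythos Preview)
Your route differs from the paper's. The paper works directly with the geodesic \emph{process}: using the argmax formula $\pi_{(x,0)}^{(y+c,1)}(r)=\arg\max_z\{\dl(x,0;z,r)+\dl(z,r;y+c,1)\}$ together with the $k=1$ skew symmetry of Lemma~\ref{lm:prp}, it shows that the entire family $\{\pi_p^q\}_{p\in\coned_c,\,q\in\coneu_c}$ has the same law as $\{\pi_p^q+L^c\}_{p\in\coned_0,\,q\in\coneu_0}$, where $L^c(r)=cr$. Both membership in $\uni$ and pairwise disjointness are preserved by the deterministic affine shift $L^c$, so $\P(\disjc^3_{\cone_i})=\P(\disjc^3_{\cone_0})$ follows at once.

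Your extended-landscape characterization is a correct equivalence for the event ``there exist three disjoint geodesics in $\cone$'' \emph{without} the $\uni$ requirement --- call it $\disjc^{3*}_\cone$ --- and your cancellation-of-quadratics argument then legitimately yields $\P(\disjc^{3*}_{\cone_i})=\P(\disjc^{3*}_{\cone_0})$. The gap is the upgrade to $\disjc^3_\cone$. In the reverse direction of your characterization, the disjoint optimizer from Theorem~\ref{thm:do} produces three disjoint geodesics, but nothing forces them to lie in $\uni$. Your proposed patch via Lemma~\ref{lm:14} does not apply: both alternatives in that lemma presuppose uniqueness (either of the limit geodesic, or of the approximants together with a priori graph convergence), which is precisely what is in question here. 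Establishing $\disjc^{3*}_\cone=\disjc^3_\cone$ almost surely is not obvious and is not supplied anywhere in the paper; the approximation argument in the proof of Lemma~\ref{lm:13} goes the other way (from $\uni$-geodesics to rational-endpoint ones), relying on the $\uni$ hypothesis.

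The cleanest repair is to drop the extended-landscape detour altogether: the $k=1$ skew symmetry in Lemma~\ref{lm:prp} is a distributional identity for the full random field $\dl$ on $\R^4_\uparrow$, so every $\dl$-measurable event --- in particular the $\uni$ membership entering $\disjc^3_\cone$ --- has shear-invariant probability. That is essentially the paper's argument.
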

\begin{proof}
	For $c\in\R$, let $\cone_c=\trpz\big((0,0),\epsilon;(c,1),T\big)$. We claim that it is enough to show that for any fixed $c\in\R$
	\begin{equation}\label{eq48}
		\big\{ \pi_p^q\big\}_{p\in\coned_c,\,q\in \coneu_c}\sim \big\{ \pi_p^q+L^c\big\}_{p\in\coned_0,\,q\in \coneu_0},
	\end{equation}
	where $L^c$ is the map $r\mapsto cr$. In words, we would like to show that the distribution of all geodesics in the trapezoid $\cone_c$ is the same as that as the geodesics in the trapezoid $\cone_0$ up to an affine transformation. The equality in distribution in \eqref{eq48} is indeed enough to imply the result as the event where geodesics meet is invariant under affine transformation i.e.\ for paths $\pi,\eta:[0,1]\rightarrow \R$ it holds that 
	\begin{equation}
		\text{$\pi$ and $\eta$ are disjoint} \iff \text{$\pi+L^c$ and $\eta+L^c$ are disjoint}
	\end{equation}
	 From the definition of the geodesic, in distribution, as a process in $x,y\in \R$ and  $r\in(0,1)$
	\begin{equation}
		\begin{aligned}
			\pi_{(x,0)}^{(y+c,1)}(r)=&\argmax{z\in\R}\,\dl(x,0;z,r)+\dl(z,r;y+c,1)\\
			=&cr+\argmax{z\in\R}\,\dl(x,0;z+cr,r)+\dl(z+cr,r;y+c,1)\\
			\sim&\,cr+\argmax{z\in\R}\,[\dl(x,0;z,r)-2c(x-z)+c^2r]\\
			&+[\dl(z,r;y,1)-2c(z-y)+c^2(1-r)]\\
			=&cr+\argmax{z\in\R}\,\dl(x,0;z,r)+\dl(z,r;y,1)-2c(x-y)+c^2\\
			=&cr+\argmax{z\in\R}\,\dl(x,0;z,r)+\dl(z,r;y,1) = cr+\pi_x^y(r),
		\end{aligned}
	\end{equation}
where in the third line we used the stationary property from Lemma \ref{lm:prp}. We have proven  \eqref{eq48} and with that the required result.
\end{proof}
\bibliographystyle{alpha}
\bibliography{references_file}
\end{document}